\documentclass[amssymb,10pt]{article}
\usepackage[pagewise]{lineno}
\usepackage{graphicx}
\usepackage{amssymb}
\usepackage{amsthm}
\usepackage{indentfirst}
\usepackage{amsmath}
\usepackage{color}

\usepackage{mathrsfs}
\usepackage{amssymb}
\usepackage{amsthm}
\usepackage{titletoc}
\usepackage{amssymb}
\usepackage{amsxtra}
\usepackage{mathrsfs}
\usepackage{amsmath,amstext,amsfonts,verbatim, url}
\usepackage{epstopdf}
\usepackage{subfig}
\usepackage{psfrag}

\usepackage{tikz}

\usepackage{xy}

\usepackage[colorlinks=true]{hyperref}
\usepackage{amsfonts}
\usepackage{mathrsfs}
\usepackage{amsthm,amsmath,amssymb,mathrsfs,amsfonts,graphicx,graphics,latexsym,exscale,cmmib57,dsfont,bbm,amscd,ulem}
\usepackage{color,soul}
\frenchspacing

\textwidth=16. true cm \textheight=22. true cm
\voffset=-1.6 true cm \hoffset=-1.8 true cm

\newtheorem{thm}{Theorem}[section]
\newtheorem{lem}[thm]{Lemma}
\newtheorem{pro}[thm]{Proposition}

\newtheorem{cor}[thm]{Corollary}
\newtheorem{rk}[thm]{Remark}

\newtheorem{claim}[thm]{Claim}
\newtheorem{qn}[thm]{Question}

\def\cal#1{\fam2#1}

\def\R{{\mathbb R}}

\def\C{{\mathbb C}}

\def\E{{\mathbb E}}
\def\T{{\mathbb T}}

\def\bb{\begin}
\def\be{\begin{equation}}
\def\ee{\end{equation}}
\def\bea{\begin{eqnarray}}
\def\eea{\end{eqnarray}}
\def\beaa{\begin{eqnarray*}}
\def\eeaa{\end{eqnarray*}}

\def\ifl{\iffalse}

\def\bb{\begin}
           \def\ea{\end{array}}
          \def\ec{\end{center}}
     \def\ed{\end{description}}
\def\be{\bb{equation}}        \def\ee{\end{equation}}
\def\bea{\bb{eqnarray}}       \def\eea{\end{eqnarray}}
\def\beaa{\bb{eqnarray*}}     \def\eeaa{\end{eqnarray*}}
 \def\et{\end{thebibliography}}


   \def\E{{\cal E}}    \def\F{{\cal F}}    
   \def\U{{\cal U}}        \def\P{{\cal P}}
          
\def\C{\mathcal{C}}


            
       \def\nt{\noindent}


\def\Int{{\rm Int}}

\def\loc{{\rm loc}}

\def\Orb{{\rm Orb}}

\def\supp{{\rm supp}}

\def\Leb{{\rm Leb}}
\def\top{{\rm top}}
\def\vol{{\rm vol}}


\begin{document}

\title{Finiteness and Geometric structure of $c$-$cu$-States with Maximal $u$-Entropy}

\date{}
\maketitle

{\center
Hangyue Zhang

\smallskip

Department of Mathematics

Nanjing University

Nanjing 210093, China

\smallskip

\smallskip

\footnote{
2020 Mathematics Subject Classification. 37D30, 37C40, 37D25,37D35.

Key words and phrases. skeleton,  measures of maximal $u$-entropy, type $c$-$cu$-states, physical measures,  partially hyperbolic diffeomorphisms,  $C^1$-topology.

}

\smallskip

}

\begin{abstract}
In a $c$-mixed system, we study $c$-$cu$-states, which capture the structural characteristics of physical measures (in similar systems), having maximum $u$-entropy. It is shown that the maximum number of $c$-$cu$-states with pairwise distinct supports is finite, and  Proposition~\ref{pro.con} is provided to construct such systems. Using a modified version of Smale's method \cite{Smale}, we explicitly construct a \( C^{\infty} \) diffeomorphism \( f \) on \( \mathbb{T}^4 \) with a partially hyperbolic splitting:  
\[
F^{uu} \oplus_{\succ} F^{cu} \oplus_{\succ} (F^{cs} \oplus_{\succ} F^{ss}),
\]  
such that \( f \) has a mixed center (or \( c \)-mixed center), \( F^{cu} \) is not uniformly expanding, and \( F^{cs} \oplus F^{ss} \) is not uniformly contracting. The method can be used to modify the product maps of linear Anosov skew products and linear Anosov systems, such that the modified map has a mixed center (or \( c \)-mixed center) and is a skew product of linear Anosov skew product. This provides concrete examples to illustrate how the physical measure changes in a semicontinuous manner across the system when the corresponding \( E^{cu} \) is non-uniformly expanding and the corresponding \( E^{cs} \) is non-uniformly contracting. The study of physical measures in similar systems can be found in the literature \cite{ref7, CM}.
\end{abstract}

\section{Introduction}
Since  Kolmogorov introduced the concept of metric entropy of invariant measures in 1958, metric entropy has become a cornerstone of ergodic theory. In 1965,  Adler, Konheim, and McAndrew \cite{AKM} expanded this field by introducing topological entropy, which provides insight into the exponential expansion of orbital segments. These two forms of entropy are intimately connected through the well-known variational principle. In the 1970s, researchers such as Dinaburg \cite{Din1,Din2}, Goodman \cite{Gom}, and Goodwin \cite{Gow}  demonstrated  that the topological entropy is equal to  the supremum of the metric entropies of invariant measures on a compact metric space. This establishes a quantitative link between the two types of entropy, although it remains possible that no invariant measures achieve a metric entropy equal to the topological entropy.

In 1985,  Ledrappier and Young \cite{LY1,LY2} introduced the concept of partial entropy along an invariant lamination, providing a new statistical perspective on the complexity of expansive components in dynamical systems. 

Let $M$ be a compact smooth connected Riemannian manifold with a Riemannian metric $\|\cdot\|$.   A diffeomorphism $f$ on a manifold $M$ is called a {\it partially hyperbolic diffeomorphism} if there exists a continuous $Df$-invariant splitting of the tangent bundle:    
$$
TM=E^{uu}\oplus E^{cu} \oplus E^{cs}
$$ 
where $E^{uu}$, $E^{cu}$, and $E^{cs}$ are subbundles satisfying the following conditions for some constants $0 < \lambda < 1$ and $c > 0$: 
for every point $x \in M$ and any natural number $n \in \mathbb{N}$, the following conditions hold:
\begin{itemize}
\item $\|Df^{-n}\mid_{E^{uu}_x} \|\le c\lambda^n$, , which implies that $E^{uu}$ is an {\it expanding subbundle};

\item  $\| Df^{n}\mid_{E^{cu}_x}\|\cdot\| Df^{-n}\mid_{E^{uu}_{f^n(x)}}\|\le c\lambda^{n} $, ;

\item  $\| Df^{n}\mid_{E^{cs}_x}\|\cdot\| Df^{-n}\mid_{E^{cu}_{f^n(x)}}\|\le c\lambda^{n}$.
\end{itemize}
This splitting is often referred to as a {\it partially hyperbolic splitting}, denoted by $E^{uu}\oplus_{\succ} E^{cu} \oplus_{\succ} E^{cs}$, where the symbol “$\succ$” indicates that the preceding subbundle dominates or controls the following subbundle.

It is well known that the partially hyperbolic splitting implies the existence of a unique $f$-invariant foliation $\mathcal{F}^{uu}(f)$ tangent to $E^{uu}$ at every point. Denote by $\F_\delta^{uu}(x,f)$ the open ball inside $\F^{uu}(x,f)$ centered at $x$ with  radius $\delta$, using the metric of $\F^{uu}(x,f)$.

In 2008,  Hua, Saghin, and Xia\cite{HSX} defined topological partial entropy along the strong unstable foliation of partially hyperbolic diffeomorphisms. The {\it topological partial entropy}, or {\it topology $u$-entropy}, of $f$  is defined as 
$$
h_\top^u(f)=\sup_{x\in M}\limsup_{n\to \infty} \dfrac{1}{n}\log\vol(f^n(\F_\delta^{uu}(x,f))),
$$
where $h_\top^u(f)$ is independent of the choice of  $\delta$. They discovered that topological partial entropy exhibits properties analogous to those of topological entropy.
Their findings, later reinforced by Hu, Hua, and Wu \cite{HHW}  in 2017, established that this topological partial entropy is the supremum of the partial entropies of invariant measures along the same foliation. 

 Let $\mu$ be an invariant measure of $f$.
The {\it partial entropy}, or {\it $u$-entropy}, of $\mu$  is defined as
$$
h_\mu^u(f)=H_\mu(\xi|f(\xi)),
$$
where $\xi$ is any  measurable partition  of $M$ satisfying the following conditions for $\mu$-almost every $x$:
\begin{itemize}
	\item $\xi(x)$ is contained in $\F^{uu}(x,f)$ and has uniformly small diameter with respect to the metric in $\F^{uu}(x,f)$;
	\item $\xi(x)$ contains an open neighborhood of $x$ in $\F^{uu}(x,f)$;
	\item $\xi$ is a refinement of $f(\xi)$.
\end{itemize}
It follows from \cite{LY1} that  $H_\mu(\xi|f(\xi))$ is independent of the choice of $\xi$  as long as $\xi$ satisfies  three conditions  above.

 Building on the variational principle  of $u$-entropies,  Hu, Wu, and Zhu \cite{HWZ} confirmed  in 2021 the existence of invariant measures whose partial entropies coincide with the topological partial entropy. We say that  $\mu$ is  a  {\it measure of maximal $u$-entropy} for $f$ if the partial entropy of $\mu$  equals to the topology partial entropy of $f$.

On the other hand, the {\it basin} of $\mu$ is defined as:
$$
B(\mu)=\{x\in M: \mu=\lim_{n\to+\infty} \frac{\sum_{0\le j\le n-1}\delta_{f^j(x)}}{n} \mbox{ in weak$^*$ topology}\}.
$$
If the Lebesgue measure of $B(\mu)$ is positive, then $\mu$ is called a {\it  physical measure}. The concept of physical measures was introduced by Sinai, Ruelle, and Bowen \cite{Bow,BR,am,gi} while studying Anosov systems and Axiom A attractors in the 1970s. Since dynamical systems primarily focus on the limit distribution of  orbits, physical measures  are crucial for analyzing the orbital behavior of  points with full-volume. Consequently, the existence, finiteness, and variation of physical measures in systems exhibiting various forms of non-uniform hyperbolicity have become popular topics, as seen in \cite{BV,DVY,ja1,ja3,ja2,cv,yca,and,CM,hua}.

This study investigates  physical measures and maximal $u$-entropy measures for partially hyperbolic diffeomorphisms.

In cases where \( E^{cs} \) is uniformly contracting, \( f \) is called {\it non-uniformly expanding along \( E^{cu} \)} if there exists a positive Lebesgue measure set \( H \) such that for every \( x \in H \),  
\[
\limsup_{n \to +\infty} \frac{1}{n}\sum_{j=1}^n\log\|Df^{-1}|_{E^{cu}_{f^j(x)}}\| < 0.
\]  
This class of partially hyperbolic systems, which includes the case where \( E^{uu} \) is trivial, was investigated by Alves, Bonatti, and Viana in   \cite{ABV} in 2000. They established the existence of physical measures for such systems.

In 2000, Bonatti and Viana \cite{BV} discovered that in cases where \( E^{cu} \) is a trivial bundle, partially hyperbolic diffeomorphisms can admit finitely many physical measures when \( E^{cs} \) is mostly contracting. Later, in 2016, Dolgopyat, Viana, and Yang \cite{DVY} further advanced the understanding of these systems by characterizing the supports and basins of the physical measures. They introduced the concept of a skeleton, which revealed the upper semi-continuity of the number of physical measures in such systems.

The study of physical measures in partially hyperbolic systems originates from the works \cite{ABV,BV} mentioned in the above two paragraphs.

In 2020, Ures, Viana, F. Yang, and J. Yang \cite{UVYY} employed the concept of a skeleton to show that for partially hyperbolic diffeomorphisms that factor over Anosov with the splitting \( TM = E^{uu} \oplus_{\succ} E^{cs} \), there exist finitely many ergodic measures of maximal \( u \)-entropy when \( E^{cs} \) is \( c \)-mostly contracting. Building on these results, Li and the author \cite{LiZhang} proved in 2023 that the number of ergodic measures of maximal \( u \)-entropy is upper semi-continuous among such diffeomorphisms factoring over the same Anosov system.

For a partially hyperbolic splitting $TM=E^{uu}\oplus_\succ E^{cu}\oplus_\succ E^{cs}$,  the subbundle $E^{cu}$ is said to be  {\it mostly expanding  in the strong sense} if, for every disk \( D^{uu} \) contained in a leaf of the strong unstable foliation, there exists a subset \( D^{uu}_0 \subset D^{uu} \) of positive Lebesgue measure such that for any \( x \in D^{uu}_0 \),  
$$
\limsup_{n\to+\infty}\frac{1}{n}\log\|Df^{-n}|_{E^{cu}_{f^n(x)}}\|<0.
$$
This definition was  given by Andersson and V\'{a}squez\cite{AV}, who demonstrated that if $E^{cu}$ is mostly expanding  in the strong sense and $E^{cs}$ is uniformly contracting, then any Gibbs $u$-state has only positive Lyanunov exponents along $E^{cu}$ and there are finite physical measures in 2018. They explained that Ma\~{n}\'{e}’s classical derived-from-Anosov diffeomorphism on $\T^3$ belongs to this set.

In 2017, Mi, Cao, and Yang \cite{ref7} generalized the assumptions on the partially hyperbolic splitting studied in  \cite{BV, AV}.  	They established that partially hyperbolic diffeomorphisms with the splitting \( TM = E^{uu} \oplus_{\succ} E^{cu} \oplus_{\succ} E^{cs} \), where \( E^{cu} \) is mostly expanding and \( E^{cs} \) is mostly contracting, admit finitely many physical measures. Later, in 2020, Mi and Cao \cite{CM} further characterized the supports of these physical measures by utilizing the concept of skeletons, revealing the upper semi-continuity of the number of physical measures among such diffeomorphisms.  
We refer to \( E^{cu} \oplus_\succ E^{cs} \) as the {\it center subbundle}.
For simplicity,  \( E^{cu} \) is mostly expanding and \( E^{cs} \) is mostly contracting,  is referred to as {\it a mixed center}.  (The precise definitions of mixed centers and mostly contracting centers can be found in section~\ref{nonapp}.)

We explore the possibility of employing a skeleton to analyze the number and variation of ergodic measures of  maximal $u$-entropy for partially hyperbolic diffeomorphisms factoring over Anosov. These diffeomorphisms are characterized by the splitting $TM=E^{uu}\oplus_{\succ} E^{cu} \oplus_{\succ} E^{cs}$, where $E^{cu}$ is $c$-mostly expanding and $E^{cs}$ is $c$-mostly contracting.  In comparison to earlier work, although the assumptions we impose on the diffeomorphisms are more restrictive—such as requiring the system to factor over Anosov—this specific condition allows us to identify concrete examples within this framework. As mentioned below, the measure of maximal \( u \)-entropy and Gibbs \( u \)-states are closely related through the factorization over Anosov systems. Many methods are also applicable for constructing examples with a mixed center.  For clarity,  a center that is both $c$-mostly expanding and $c$-mostly contracting, as  mentioned  in this paragraph, is referred to as {\it a $c$-mixed center}.

 The motivation for this research stems from the ongoing studies of  measures of maximal $u$-entropy and physical measures in the context of partially hyperbolic diffeomorphisms under various central conditions.
Through a comprehensive analysis, we recognize the necessity of establishing a proper classification for these ergodic measures of maximal $u$-entropy. 

It is noteworthy that in $C^{1+}$-partially hyperbolic systems with  mixed centers, physical measures directly correspond  to ergodic Gibbs $cu$-states. Consequently, the number of physical measures is equal to the number of ergodic Gibbs $cu$-states,  as shown in previous work. Some subset of Gibbs $u$-states is the set of ergodic Gibbs $cu$-states. When investigating physical measures, it is frequently necessary to apply the properties of Gibbs $u$-state, which are analogous to those outlined in Proposition~\ref{main.pro}. 

Moreover, we have observed that for partially hyperbolic diffeomorphisms factoring over Anosov, the  measures of maximal $u$-entropy are equivalent to the  $c$-Gibbs $u$-states, which possess properties similar to those of Gibbs $u$-states (see Proposition~\ref{main.pro} for further details). Since the number of ergodic Gibbs $cu$-states matches the number of physical measures in systems with a mixed center, and because physical measures can analyze the orbit distribution of full-volume points with respect to the  Lebesgue measure, it is natural  to prioritize maximal $u$-entropy measures whose structure is  similar to that of physical measures.

Local Pesin unstable manifolds are a powerful tool for investigating the existence and finiteness of physical measures in the $C^{1+}$-setting, particularly under conditions of non-uniform expansion along an invariant center-unstable subbundle, as demonstrated in \cite{ja2,ja3,AV,cv,ABV}.

In this context,  local Pesin unstable manifolds are no longer  merely  a tool for studying physical measures in the sense of non-uniform expansion. Instead, we utilize them to classify ergodic measures of maximal $u$-entropy into two distinct types: $c$-$u$-state and $c$-$cu$-state, among partially hyperbolic diffeomorphisms factoring over Anosov with $c$-mixed centers. 
We have abstracted and extracted a more general structural property of Gibbs $cu$-states to define the $c$-$cu$-state (as detailed in  the definition of $c$-$cu$-state). The notion of $c$-$cu$-states is essentially different from that of Gibbs $cu$-states. 
To clarify this distinction further, we point out that the conditional measures associated with $c$-$cu$-states along local Pesin unstable manifolds are unclear.  This critical distinction emphasizes the absence of conditional measure characteristics inherent to $c$-$cu$-states when compared to Gibbs $cu$-states, which are known to have conditional measures that are equivalent to the Lebesgue measure on local Pesin unstable manifolds almost everywhere.  Furthermore, stable lamination is absolutely continuous when the partially hyperbolic diffeomorphism is $C^{1+}$.

After defining $c$-$cu$-states, we investigate the existence and finiteness of these measures. Although there are no general theoretical results confirming their existence,  we can demonstrate their existence through qualitative constructions.

Our proposed  definition of a skeleton differs from those  presented in previous works \cite{DVY} and \cite{CM},  yet it is rooted in the observation that  the support structures of physical measures(ergodic Gibbs $cu$-states) and ergodic measures of maximal $u$-entropy of type $c$-$cu$-states are closely related to invariant manifolds of hyperbolic periodic points. Our  results suggest that the maximum number of 
$c$-$cu$-states with pairwise distinct supports is finite
 among partially hyperbolic diffeomorphisms factoring over Anosov with  $c$-mixed centers. We point out that the finiteness established by Theorem A is obatined under a very weak definition of $c$-$cu$-states and differs from the methods used to establish the finiteness of physical measures \cite{ref7}.

 Furthermore, we have characterized the support and basin of  $c$-$cu$-states using our defined skeleton (see Theorem~A for more details). The property of having a $c$-mixed center is open in the sense of factoring over the same Anosov (see Theorem~ B). Additionally,  the maximum number of $c$-$cu$-states whose supports are pairwise distinct is locally bounded among partially hyperbolic diffeomorphisms factoring over the same Anosov with  $c$-mixed centers.

In the study of partially hyperbolic systems, researchers are often concerned with the availability of sufficiently many examples related to the objects of study. Proposition~\ref{pro.con} plays a crucial role in this regard, serving two  purposes in our work.
 First, it follows from Proposition~\ref{pro.con} that a partially hyperbolic diffeomorphism with the splitting \( TM = E^{uu} \oplus_{\succ} E^{cs} \) and a \( c \)-contracting center (or contracting center) can always be used to construct a partially hyperbolic diffeomorphism with a \( c \)-mixed center (or mixed center), where the corresponding \( E^{cu} \) is uniformly expanding. Examples of diffeomorphisms with mostly contracting centers have been discovered by several authors. Examples include: 
\begin{itemize}
\item Ma\~{n}\'{e}'s  robustly transitive diffeomorphisms (see  \cite{Man,BV,BDV});
\item Dolgopyat's \cite{Dolgopyat} volume-preserving perturbations of time-one maps of Anosov flows;
\item Volume-preserving diffeomorphisms exhibiting negative center Lyapunov exponents and minimal unstable foliations, as discussed in \cite{VY,BV,BDP,kbu};  
\item Accessible skew-products over Anosov on  $M \times S^1$  that are not rotation extensions, described in \cite{VY}.  
\item New Kan-type skew-products on $\T^2\times S^2$ of Dolgopyat, Viana and Yang \cite{DVY}.
\end{itemize}  
All these examples can be adapted to construct systems with mixed centers by Proposition~\ref{pro.con}. Secondly,  it  can be used  to construct an example showing the upper semi-continuous variation in the number of physical measures among partially hyperbolic diffeomorphisms with the nontrivial splittings
 $TM=E^{uu}\oplus_{\succ} E^{cu} \oplus_{\succ} E^{cs}$ and   mixed centers.
We use  examples of New Kan-type skew-products to show the upper semi-continuity of physical measures in mixed systems. However, in this case, the corresponding \( E^{cu} \) is uniformly expanding. (See the section~\ref{qual} for more information).

In the final section, we construct a new type of example: a \( C^\infty \)-partially hyperbolic diffeomorphism \( f \) on \( \mathbb{T}^4 \) that admits a partially hyperbolic splitting 
\[
F^{uu} \oplus_{\succ} F^{cu} \oplus_{\succ} (F^{cs}\oplus_\succ F^{ss}).
\]
The diffeomorphism \( f \) has the property that every Gibbs \( u \)-state (or \( c \)-Gibbs \( u \)-state) exhibits positive Lyapunov exponents along \( F^{cu} \) and negative Lyapunov exponents along \( F^{cs} \oplus F^{ss} \). Additionally, $F^{cu}$ is not uniformly expanding and $F^{cs} \oplus F^{ss}$ is not uniformly contracting. As an application, a partially hyperbolic diffeomorphism  constructed by Proposition~\ref{pro.con} can be further modified in a similar manner such that the modified map's  corresponding \( E^{cu} \) is not uniformly expanding and the corresponding \( E^{cs} \) is not uniformly contracting, along with a domination (\(E^{cu}\oplus_\succ E^{cs}\)).   For instance, in the final section, we present an example illustrating how a linear Anosov skew-product coming from Proposition~\ref{pro.con} can be modified. Based on this, we can also provide examples of the semicontinuous variation of physical measures in mixed systems, where \( E^{cu} \) is not uniformly expanding and \( E^{cs} \) is not uniformly contracting.  The result on the semicontinuity of physical measures in mixed systems is derived from \cite{CM}.

By the way, it appears that, up to now, no examples of partially hyperbolic diffeomorphisms with mixed centers have been found, where the corresponding \( E^{cu} \) is not uniformly expanding and the corresponding \( E^{cs} \) is not uniformly contracting. (In each section, the partially hyperbolic diffeomorphism represented by \( f \) varies, but we will explicitly specify its meaning in each case.)

\section{Definitions and Main Results}\label{2}

Throughout this section, we assume that $f:M\rightarrow M$ is a  partially hyperbolic diffeomorphism with a  splitting $TM=E^{uu}\oplus_{\succ} E^{cu} \oplus_{\succ} E^{cs}$, and $\mu$ is an invariant measure of $f$.

\subsection{Measures of Maximal $u$-Entropy for Maps Factoring over Anosov}

Let \( A: \mathbb{T}^d \rightarrow \mathbb{T}^d \) be a hyperbolic linear automorphism, and let \( W^{u}(A) \) and \( W^{s}(A) \) denote the unstable and stable foliations of \( A \), respectively. We say that $\mathcal{R}=\{\mathcal{R}_{1},\cdots,\mathcal{R}_{m}\}$ is a {\it Markov partition} for $A$   if the following conditions are satisfied:
\begin{itemize}
\item $\mathcal{R}=\{\mathcal{R}_{1},\cdots,\mathcal{R}_{m}\}$ is
a  closed covering of $\mathbb{T}^{d}$ such that 
\begin{itemize}
\item each set $\mathcal{R}_i$ is equal to the closure of its interior;
\item the interiors of distinct sets in the partition are disjoint.
\end{itemize}
\item For each $\mathcal{R}_i$, and any point $x\in\mathcal{R}_i$, denote by $W^{u/s}_{i}(x)$   the connected component  of  $W^{u/s}(x,A)\cap\mathcal{R}_i$ that contains $x$, respectively. Then:
{\begin{itemize}
\item for any $x,y\in\mathcal{R}_{i}$, $W^{u}_{i}(x)\cap W^{s}_{i}(y) $ contains exactly one point;
\item if $x\in \Int(\mathcal{R}_i)\cap A^{-1}(\Int(\mathcal{R}_{j}))$, then $A(W^{s}_{i}(x))\subset W^{s}_{j}(A(x))$ and $A(W^{u}_{i}(x))\supset W^{u}_{j}(A(x))$.
\end{itemize}}
\end{itemize}
It has been shown by Bowen in \cite{Bow} that Markov partitions always exist for $A$.

 We say that  $f$ is    {\it dynamically coherent}  if  there exists a unique $f$-invariant foliation $\F^c(f)$ tangent to $E^{cu} \oplus E^{cs}$  at every point. Recall that $\mathcal{F}^{uu}(f)$ is the unique $f$-invariant foliation  tangent to $E^{uu}$ at every point.  
Assume that $f$ is dynamically coherent. Now, we say that $f$ {\it factors over Anosov (or $A$)} if  there exists  a continuous and surjective map $\pi:M\rightarrow \mathbb{T}^d$ satisfying the following conditions:
\begin{itemize}
\item Semiconjugacy: $\pi\circ f=A\circ\pi$;
\item Unstable foliation preservation: for any $x\in M$, $\pi(\F^{uu}(x,f))=W^u(\pi(x),A)$ and $\pi$  is a homeomorphism when restricted to $\F^{uu}(x,f)$;
\item  Center foliation under $\pi$: for any $x\in M$, $\pi(\F^c(x,f))=W^s(\pi(x),A)$.
\end{itemize}

Assume that $f$ factors over Anosov via the map $\pi$. We define a
{\it $\pi^{-1}$-Markov partition} of $f$ as
 $$
\mathcal{M}=\{\mathcal{M}_i: \mathcal{M}_i=\pi^{-1}(\mathcal{R}_i), 1\le i\le k\}.
$$
For each $x$,  let $\F^{uu}_i(x)$ denote the connected component of $\F^{uu}(x)\cap\mathcal{M}_i$ that contains $x$.  This $\F^{uu}_i(x)$ is called the {\it strong-unstable plaque} of $x$.  It has been shown by Ures,  Viana, F. Yang and J. Yang in \cite{UVYY}  that $$\pi(\F^{uu}_i(x))=W^u_i(\pi(x)).$$ A  probability measure $\nu^{uu}_{i,x}$ on $\F^{uu}_i(x)$ is called a {\it reference measure}   if  $$\pi_*(\nu^{uu}_{i,x})=\vol^u_{i,\pi(x)},$$
	where $\vol^u_{i,\pi(x)}$ is the normalized Lebesgue measure on $W^u_i(\pi(x))$. 
	
We say that $\mu$ is  a {\it $c$-Gibbs $u$-state} of $f$ if, for each $i$, $\mu$-almost every $x\in \mathcal{M}_i$, the Rokhlin disintegration of the restriction $\mu|_{\mathcal{M}_i}$ along $\F^{uu}_i(x)$  coincides with the reference measure $\nu^{uu}_{i,x}$.  Let \( \mathrm{Gibbs}^u_c(f) \) and \( \mathrm{EGibbs}^u_c(f) \) denote the sets of all \( c \)-Gibbs \( u \)-states and all ergodic \( c \)-Gibbs \( u \)-states of \( f \), respectively. Ures, Viana, F. Yang, and J. Yang proved in \cite{UVYY} that the set of measures of maximal \( u \)-entropy coincides with \( \mathrm{Gibbs}^u_c(f) \).

\subsection{The
$c$-Mixed Center, Classification of Ergodic Measures of Maximal 
$u$-Entropy, and Skeleton}

Assume that \( f \) factors over Anosov. We say that \( f \) has a {\it \( c \)-mostly expanding and \( c \)-mostly contracting center} (or a {\it \( c \)-mixed center}) if every \( c \)-Gibbs \( u \)-state has only positive Lyapunov exponents along \( E^{cu} \) and only negative Lyapunov exponents along \( E^{cs} \). In this subsection, we assume that \( f \) has a \( c \)-mixed center.

We say $\mu$   a {\it $c$-$cu$-state} of $f$ if $\mu$ is an ergodic $c$-Gibbs $u$-state, and there exists a measurable subset $\Gamma(\mu)$ with $\mu(\Gamma(\mu)) > 0$, along with a measurable function $\delta: \Gamma(\mu) \to \mathbb{R}^+$. For every $x \in \Gamma(\mu)$, the condition 
$$
W^u_{\delta(x)}(x) \subset \supp(\mu)
$$ 
holds, where $W^u_{\delta(x)}(x)$ denotes the open ball centered at $x$ with radius $\delta(x)$ within the local Pesin unstable manifold $W^u_\loc(x)$, using the metric of $W^u_\loc(x)$. The dimension of the manifold $W^u_{\text{loc}}(x)$ is given by $dim(E^{uu}\oplus E^{cu})$(where  $W^u_\loc(x)$ as described in Lemma~\ref{kakaka}). Now, $\mu$ is called a {\it $c$-$u$-state} of $f$ if $\mu$ is an ergodic $c$-Gibbs $u$-state but not a $c$-$cu$-state.

Let $EG^{cu}(f)$ denote the set of all  $c$-$cu$-states, and let $EG^{u}(f)$ denote the set of  all  $c$-$u$-states. It follows that $$EGibbs^u_c(f)=EG^{cu}(f)\cup EG^{u}(f).$$ To avoid confusion, 
set $EGibbs^u_c(f)=EG^{cu}(f)$  when $\dim(E^{cu})=0$.

Consider a finite subset $T$ of the manifold $M$, defined by
$$
T=\{p_1,\cdots,p_k| \mbox{ each $p_i$ is a hyperbolic periodic point of stable index $dim E^{cs}$}\}.
$$ 
We call $T$  a  {\it skeleton} of $f$ if for any $i\ne j$, the following conditions do not  hold simultaneously:
\begin{itemize}
\item $W^u(p_j)\cap W^s(p_i)\neq\emptyset$;
\item $W^u(p_i)\cap W^s(p_j)\neq\emptyset$.
\end{itemize}
Here, $W^{u/s}(\cdot)$ denote the unstable  and  stable manifolds through the point $(\cdot)$, respectively.

\subsection{Statements of the Main Results}

\smallskip
\smallskip
\smallskip
\smallskip

\nt {\bf Theorem~A.} {\it Let $f$  factor over Anosov with a $c$-mostly expanding  and $c$-mostly contracting center. If the set of maximal $u$-entropy measures of type $c$-$cu$-states is not empty, then the following statements hold:
\begin{enumerate}
\item The maximum number of $c$-$cu$-states with pairwise distinct supports is finite.
\item There exists a skeleton $T(f)=\{q_1,...,q_\ell\}$ such that
\begin{enumerate}
\item For any $c$-$cu$-state $\mu$, there exists some periodic point $q_j$ in the skeleton such that
\begin{itemize}
\item the support of $\mu$ is the closure of the unstable manifolds through the orbit of  $q_j$;
\item the closure of the basin of $\mu$ contains the closure of the stable manifolds through the orbit of  $q_j$.
\end{itemize}
\item For each $q_i$ in the skeleton, there exists some $c$-$cu$-state $\nu$ such that 
\begin{itemize}
\item the closure of the unstable manifolds through the orbit of $q_i$ is the support of  $\nu$;
\item the closure of the stable manifolds through the orbit of $q_i$ is contained in the closure of the basin of  $\nu$.
\end{itemize}
\end{enumerate}
\end{enumerate}

As a result, the maximum number of $c$-$cu$-states whose supports are pairwise distinct does not exceed $\ell$.}

\smallskip
\smallskip
\smallskip
\smallskip

For clarity, we point out that the result of Theorem A states that among any $\ell + 1$ $c$-$cu$-states, at least two have the same support.  Since the maximum number of $c$-$cu$-states with pairwise distinct supports is finite, we can further consider the changes in this number.
The second part of Theorem~A describes the geometric structure of $c$-$cu$-states, enabling us to track the number of these measures as the given  diffeomorphism is perturbed.

Furthermore, we obtain the following  theorems. 

\smallskip
\smallskip
\smallskip
\smallskip

\nt {\bf Theorem~B.} {\it Let $A:\mathbb{T}^d\rightarrow \mathbb{T}^d$ be a hyperbolic linear automorphism.
If $f$ factors over $A$ with a $c$-mostly expanding  and $c$-mostly contracting center. Then there exist a $C^1$-neighborhood $\mathcal{U}_f$ of $f$ and a constant $\tilde{\ell}$ such that any $g\in \mathcal{U}_f$ factoring over $A$,   the following conditions hold:
\begin{itemize}
\item $g$ admits a $c$-mostly expanding  and $c$-mostly contracting center.
\item  The maximum number of $c$-$cu$-states of $g$ whose supports are pairwise distinct is not bigger than $\tilde{\ell}$.
\end{itemize}
}

\smallskip
\smallskip
\smallskip
\smallskip

\nt {\bf Theorem~C.} {\it  There exist a $C^\infty$-diffeomorphism $f$ on $\mathbb{T}^2\times\mathbb{T}^2\to \mathbb{T}^2\times\mathbb{T}^2$, a \( C^1 \)-neighborhood $\U_f$ of \( f \) and a hyperbolic linear automorphism $A:\mathbb{T}^2\rightarrow \mathbb{T}^2$  such that for each $g\in\U_f$
\begin{itemize}
\item\label{(111)} $g$ factors over $A$.
\item\label{(222)} $g$ admits a partially hyperbolic splitting $T(\mathbb{T}^2\times\mathbb{T}^2)=E^{uu}_g\oplus_\succ E^{cu}_g\oplus_\succ E^{cs}_g$ such that 
\begin{enumerate}
\item every $c$-Gibbs $u$-state of $g$ has only positive Lyapunov exponents along $E^{cu}_g$ and only negative Lyapunov exponents along $E^{cs}_g$;
\item $E^{cu}_g$ is not uniformly expanding and $E^{cs}_g$ is not uniformly contracting.
\end{enumerate}
\end{itemize}
}

\section{Lemmas to Prove Main Theorems}

\smallskip
\smallskip

\subsection{Properties of Hyperbolic Measures and Invariant Measures}

In this subsection, we present the major tools used throughout this article.    Let $f:M\rightarrow M$  be a partially hyperbolic diffeomorphism with the splitting $TM=E^{uu}\oplus_{\succ}E^{cu} \oplus_{\succ}E^{cs}$. Define $E=E^{uu}\oplus E^{cu}$ and $F=E^{cs}$.  The {\it Pesin blocks} are defined as:
\[\Lambda_f(\alpha,l,E,F)=\{x: \prod_{i=0}^{n-1}\|Df^l|_{F(f^{il}(x))}\le e^{-\alpha nl}, \prod_{i=0}^{n-1}\|Df^{-l}|_{E(f^{-il}(x))}\le e^{-\alpha nl},  \forall n\in\mathbb{N}\},\]
where $l\in\mathbb{N}, \alpha>0$.

Any point in a Pesin block $\Lambda_f(\alpha,l,E,F)$ admits stable and unstable manifolds of  uniformly  size.  Mi and Cao, using the Plaque Family Theorem, presented the following lemma.  The property of having a partially hyperbolic splitting is open among   diffeomorphisms. 

\begin{lem}\label{kakaka}\cite{CM} 
For every $\alpha>0$ and $l\in\mathbb{N}$, there exist a $C^1$ neighborhood $\mathcal{U}$ of $f$,  along with constants $\tau:=\tau(\alpha, l)\in(0,1), C:=C(\alpha,l)>0$, and $\delta:=\delta(\alpha,l)>0$. These constants guarantee the following properties: for any diffeomorphism $g\in\mathcal{U}$ and any $x\in \Lambda_g(\alpha,l,E_g,F_g)$, there exist local stable
manifold $W^s_{\loc}(x,g)$    and local unstable manifold $W^u_{\loc}(x,g)$, tangent to the bundles $F_g$ and $E_g$ at every point respectively. 

These manifolds are $C^1$-embedded disks of radius $\delta$ centered at $x$,  using the metric of these manifolds.  The following properties hold for every  $n\in\mathbb{N}$:
\begin{itemize}
	\item $d(g^n(y),g^n(z))\le C\tau^nd(y,z)$ for any $y,z\in W^s_{\loc}(x,g)$;
	\item $d(g^{-n}(y),g^{-n}(z))\le C\tau^nd(y,z)$ for any $y,z\in W^u_{\loc}(x,g)$.
\end{itemize}

Moreover,  there exist constants $C^\prime$ and $\varepsilon_0>0$ such that for any disk $D$ tangent to $E_g$ at every point, the metric $d_D$ inherited from $E_g(x)(\varepsilon_0)$ is equivalent to  the ambient metric $d$ on the manifold $M$. This equivalence is given by:
$$
\frac{1}{C^\prime}d(z,y)\le d_D(z,y)\le C^\prime d(z,y)
$$
for any points $y,z\in D\cap \exp_x(E_g(x)(\varepsilon_0))$, where $\exp_x:E_g(x)\mapsto M$ is the exponential map at $x$. Additionally, both $g(W^u_{\loc}(x, g))$ and $g^{-1}(W^u_{\loc}(x, g))$ remain tangent to $E_g$.
\end{lem}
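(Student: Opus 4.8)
The plan is to combine the Plaque Family Theorem of Hirsch--Pugh--Shub with the uniform hyperbolicity hard-wired into the definition of the Pesin block, and then to track every constant so that the conclusions hold uniformly over a $C^1$-neighborhood. First I would observe that $E_g=E^{uu}_g\oplus E^{cu}_g$ and $F_g=E^{cs}_g$ define a dominated splitting for every $g$ in some fixed $C^1$-neighborhood $\mathcal{U}_0$ of $f$, since domination is $C^1$-open. The Plaque Family Theorem then provides, for each such $g$, locally $g$-invariant families of $C^1$ plaques $\{\mathcal{W}^{cu}_g(x)\}_x$ tangent to $E_g$ and $\{\mathcal{W}^{cs}_g(x)\}_x$ tangent to $F_g$, of some uniform radius $\delta_0>0$, with uniform $C^1$ bounds, and varying continuously with $g$ in the $C^1$ topology; here local invariance means $g(\mathcal{W}^{cu}_g(x))\supset\mathcal{W}^{cu}_g(g(x))$ near $g(x)$, and dually for $\mathcal{W}^{cs}_g$ under $g^{-1}$, after passing to smaller sub-plaques.

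The core step is the contraction estimate on a Pesin block. Fix $g\in\mathcal{U}_0$ and $x\in\Lambda_g(\alpha,l,E_g,F_g)$. Working in exponential charts and using the uniform $C^1$ control of the plaques together with a bounded-distortion (mean value) argument, one compares the intrinsic distance along $\mathcal{W}^{cs}_g(x)$ with $\prod_{i=0}^{n-1}\|Dg^l|_{F_g(g^{il}(x))}\|$; the block inequality $\prod_{i=0}^{n-1}\|Dg^l|_{F_g(g^{il}(x))}\|\le e^{-\alpha nl}$ then forces
\[
d\bigl(g^{ln}(y),g^{ln}(z)\bigr)\le C_1 e^{-\alpha ln}\,d(y,z)\qquad\text{for all }y,z\in\mathcal{W}^{cs}_{g,\delta}(x),
\]
provided $\delta\le\delta_0$ is chosen small enough that the orbit stays in the region where the plaque estimates are valid; this makes the sub-plaque of radius $\delta$ genuinely forward-contracted, hence equal to the local stable manifold $W^s_\loc(x,g)$. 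The intermediate iterates $0\le j<l$ cost only the fixed factor $\max_{0\le j<l}\|Dg^j\|$, which, absorbed into the constants and with $\tau:=e^{-\alpha/2}$, yields $d(g^n(y),g^n(z))\le C\tau^n d(y,z)$ for every $n\in\mathbb{N}$. Running the same argument for $g^{-1}$ with the plaques $\mathcal{W}^{cu}_g$ and the block inequality $\prod_{i=0}^{n-1}\|Dg^{-l}|_{E_g(g^{-il}(x))}\|\le e^{-\alpha nl}$ produces $W^u_\loc(x,g)$ tangent to $E_g$, of dimension $\dim E_g=\dim(E^{uu}\oplus E^{cu})$, with the backward estimate.

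It remains to upgrade this to uniformity in $g$ and to dispose of the two auxiliary clauses. Every constant produced above ($C_1$, $\delta$, $\tau$, and the bound on intermediate iterates) depends only on $\alpha$, $l$, the uniform plaque bounds, and $\sup_{g\in\mathcal{U}_0}\|Dg^{\pm1}\|$, and never on the particular $g$ or the particular $x$ in the block; so after possibly shrinking $\mathcal{U}_0$ to a neighborhood $\mathcal{U}$ on which all these are controlled, the conclusions hold simultaneously for all $g\in\mathcal{U}$. For the metric comparison, continuity of $x\mapsto E_g(x)$ (uniform over $g\in\mathcal{U}$) gives an $\varepsilon_0>0$ such that any disk $D$ tangent to $E_g$ is, in the chart $\exp_x$, the graph of a $C^1$ map $\psi$ over $E_g(x)(\varepsilon_0)$ with $\|D\psi\|\le1$; then arc length along $D$ and along its projection differ by at most a factor $\sqrt2$, giving $\frac1{C'}d\le d_D\le C' d$. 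Finally, $g^{\pm1}(W^u_\loc(x,g))$ has tangent space $Dg^{\pm1}(E_g)=E_g$ by $Dg$-invariance of the splitting, which is the last assertion.

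The hard part will be the core step: converting the merely locally invariant plaque family into an honest local manifold carrying the \emph{uniform} radius $\delta$ and the \emph{all}-$n$ contraction estimate, while ensuring $\delta$ degenerates neither as $x$ ranges over the block nor as $g$ ranges over $\mathcal{U}$. This is precisely where the fixed choice of $(\alpha,l)$ is essential, and where one must carefully pass from the $f^l$-step data in the block definition to estimates valid for every iterate, and bound the distortion along the plaques uniformly.
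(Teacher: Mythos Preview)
The paper does not prove this lemma at all: it is quoted from \cite{CM}, and the surrounding text explicitly says ``Mi and Cao, using the Plaque Family Theorem, presented the following lemma.'' Your proposal is therefore not competing with a proof in the paper but supplying one where the paper simply cites the literature; your route via the Hirsch--Pugh--Shub Plaque Family Theorem plus the block estimates is exactly the mechanism the paper attributes to \cite{CM}, so the approach is the intended one.
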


This lemma states that for  any $a,b\in\Lambda_g(\alpha,l,E_g,F_g)$, if $a$ is sufficiently close  to $b$, then $W^s_{\loc}(a)$ transversely intersects $W^u_{\loc}(b)$, and $W^u_{\loc}(a)$ transversely intersects $W^s_{\loc}(b)$. For simplicity, we  abbreviate $\Lambda_g(\alpha,l,E_g,F_g)$ as $\Lambda_g(\alpha,l)$.

 For any $x\in\Lambda_g(\alpha,l)$, we  denote the local unstable and local stable manifolds of $x$ from the previous lemma as $W^{u/s}_{\delta(\alpha,l)}(x)$, emphasizing that  the radius
$\delta(\alpha,l)$ of these local manifolds  depends only on the parameters $\alpha$ and $l$.

The following lemma is used to establish the existence of a Pesin block with positive measure for   any $c$-Gibbs $u$-state.

\begin{lem}\cite{CM}\label{yizhixing} Given $0<\alpha<\alpha_0$,  for any invariant measure $\mu$,  if $\mu$-almost every $x\in M$ satisfies
\begin{itemize}
\item $\lim_{n\to+\infty}\frac{1}{n}\log\| Df^{-n}|_{E(x)}\|<-\alpha_0$,
\item $\lim_{n\to+\infty}\frac{1}{n}\log\| Df^{n}|_{F(x)}\|<-\alpha_0,$ 
\end{itemize}
then for any $\varepsilon>0$, there exist $l\in\mathbb{N}$, a $C^1$-neighborhood $\mathcal{U}$ of $f$ and a neighborhood $\mathcal{V}$ of $\mu$ such that for every diffeomorphism $g\in\mathcal{U}$ and every $g$-invariant measure $\nu\in\mathcal{V}$, the following holds:
\[
\nu(\Lambda_g(\alpha,l))>1-\varepsilon.
\]
\end{lem}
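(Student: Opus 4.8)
\medskip
\noindent\textbf{Proof proposal.} The plan is to reduce membership in the Pesin block to an estimate on Birkhoff sums, over the maps $g^{-l}$ and $g^{l}$, of the continuous functions
$$\Phi^E_{l,g}(x)=\tfrac1l\log\|Dg^{-l}|_{E_g(x)}\|,\qquad \Phi^F_{l,g}(x)=\tfrac1l\log\|Dg^{l}|_{F_g(x)}\|,$$
and then to control that estimate by a single application of the maximal ergodic theorem. Fix $\alpha_1\in(\alpha,\alpha_0)$ and set $\delta_1=\alpha_1-\alpha>0$. First I would use that $l\mapsto\log\|Df^{-l}|_{E(\cdot)}\|$ is subadditive along $f^{-1}$, so by Kingman's theorem $\Phi^E_{l,f}\to\chi^E$ $\mu$-a.e.\ and in $L^1(\mu)$ with $\chi^E$ $f$-invariant and, by hypothesis, $\chi^E<-\alpha_0<-\alpha_1$ a.e.; since $0\le(\Phi^E_{l,f}+\alpha_1)^+\le \tfrac1l\log^+\|Df^{-l}|_{E}\|+\alpha_1$ is bounded uniformly in $l$ and tends a.e.\ to $(\chi^E+\alpha_1)^+=0$, dominated convergence gives $\int(\Phi^E_{l,f}+\alpha_1)^+\,d\mu\to0$, and likewise for $F$. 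So for any prescribed $\eta>0$ I can fix $l$ with $\int(\Phi^E_{l,f}+\alpha_1)^+\,d\mu<\eta$ and $\int(\Phi^F_{l,f}+\alpha_1)^+\,d\mu<\eta$. Because the partially hyperbolic splitting persists and $E_g,F_g$ depend continuously on $g$, the maps $(g,x)\mapsto\Phi^{E}_{l,g}(x),\Phi^{F}_{l,g}(x)$ are continuous and $\sup_x|\Phi^{E}_{l,g}-\Phi^{E}_{l,f}|\to0$ as $g\to f$; combining this with weak-$*$ convergence (the functions $(\Phi^{E/F}_{l,f}+\alpha_1)^+$ are fixed and continuous) produces a $C^1$-neighborhood $\mathcal U$ of $f$ and a neighborhood $\mathcal V$ of $\mu$ so that $\int(\Phi^{E}_{l,g}+\alpha_1)^+\,d\nu<2\eta$ and $\int(\Phi^{F}_{l,g}+\alpha_1)^+\,d\nu<2\eta$ for all $g\in\mathcal U$ and all $g$-invariant $\nu\in\mathcal V$; shrinking $\mathcal U$ I also fix finite constants $C_E=\log\sup_{\mathcal U}\|Dg^{-1}|_{E_g}\|$ and $C_F=\log\sup_{\mathcal U}\|Dg|_{F_g}\|$, independent of $l$, with $\Phi^{E}_{l,g}\le C_E$, $\Phi^{F}_{l,g}\le C_F$ everywhere.

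The heart of the argument is then the following. Fix such $g$ and $\nu$, write $T=g^{-l}$ and $\psi=\Phi^{E}_{l,g}+\alpha$, so that $x$ fails the $E$-half of the defining inequalities of $\Lambda_g(\alpha,l)$ exactly when $S_N(x):=\sum_{i=0}^{N-1}\psi(T^ix)>0$ for some $N\ge1$. Put $K^c=\{\psi\ge-\delta_1/2\}=\{\Phi^{E}_{l,g}\ge-(\alpha+\delta_1/2)\}$; on $K^c$ one has $(\Phi^{E}_{l,g}+\alpha_1)^+\ge\delta_1/2$, hence $\nu(K^c)\le\tfrac{2}{\delta_1}\int(\Phi^{E}_{l,g}+\alpha_1)^+\,d\nu<\tfrac{4\eta}{\delta_1}$, while $\psi\le-\delta_1/2$ on $K$ and $\psi\le C_0:=C_E+\alpha$ everywhere. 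Thus if $S_N(x)>0$ and $b_N=\#\{0\le i<N:T^ix\in K^c\}$, then $0<S_N(x)\le-\tfrac{\delta_1}{2}(N-b_N)+C_0b_N$, so $b_N/N>\theta:=\frac{\delta_1/2}{\delta_1/2+C_0}$. Consequently the set of $x$ failing the $E$-half is contained in $P_\theta:=\{x:\sup_{N\ge1}\tfrac1N\sum_{i=0}^{N-1}\mathbf 1_{K^c}(T^ix)>\theta\}$, and applying the maximal ergodic theorem to $h=\mathbf 1_{K^c}-\theta$ gives $\int_{P_\theta}h\,d\nu\ge0$, i.e.\ $\nu(P_\theta)\le\tfrac1\theta\,\nu(K^c\cap P_\theta)\le\tfrac1\theta\,\nu(K^c)<\tfrac{4\eta}{\theta\delta_1}$. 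Running the identical argument with $T=g^{l}$, $\psi=\Phi^{F}_{l,g}+\alpha$, $C_0'=C_F+\alpha$ and $\theta'=\frac{\delta_1/2}{\delta_1/2+C_0'}$ bounds the measure of $x$ failing the $F$-half by $\tfrac{4\eta}{\theta'\delta_1}$. Therefore $\nu(\Lambda_g(\alpha,l))\ge 1-\tfrac{4\eta}{\theta\delta_1}-\tfrac{4\eta}{\theta'\delta_1}$, and since $\theta,\theta',\delta_1$ depend only on $\alpha,\alpha_0$ and on the (already fixed) neighborhood $\mathcal U$, choosing $\eta$ small enough at the outset makes this exceed $1-\varepsilon$.

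The step I expect to be the real obstacle — and the reason $l$ must be allowed to depend on $\varepsilon$ — is squeezing a bound on the measure of $\{x:S_N(x)>0\text{ for some }N\}$ out of the data: a direct Hopf/maximal argument applied to $\psi$ itself only yields $\nu(\Lambda_g(\alpha,l))\ge(-\int\psi\,d\nu)/\|\psi^-\|_\infty$, which is a constant bounded away from $1$ (both numerator and denominator stay of order one as $l$ grows). What rescues the argument is that, as $l\to\infty$, $\Phi^{E}_{l,f}$ concentrates near its negative Lyapunov limit $\chi^E$, so the "bad" set $\{\psi\ge-\delta_1/2\}$ has small $\nu$-measure; the clean way to cash this in is the excursion count plus the maximal ergodic theorem above. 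Everything else is routine: checking the domination in the dominated-convergence step, the joint continuity and the uniform constants $C_E,C_F$ under $C^1$-perturbation, and the order in which $\alpha_1$, then $\mathcal U$ (hence $\theta,\theta'$), then $\eta$, then $l$, then $\mathcal V$ are fixed.
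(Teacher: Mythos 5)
The paper itself contains no proof of this lemma: it is imported verbatim from \cite{CM}, so there is no in-paper argument to compare yours against. Judged on its own, your proof is correct, and its skeleton (make $\int(\Phi^{E/F}_{l,f}+\alpha_1)^+\,d\mu$ small by choosing $l$ large via Kingman plus bounded convergence, transfer this to nearby $g$ and nearby invariant $\nu$ using uniform $C^1$-continuity of the invariant bundles and weak-$*$ closeness against the fixed continuous functions, then bound the measure of the set of points violating the block inequalities by a maximal-ergodic-theorem estimate) is the standard route for statements of this type. Two remarks. First, your excursion-counting step needs $C_0=C_E+\alpha>0$ for $\theta\in(0,1)$; if $C_0\le 0$ then $\psi\le 0$ everywhere and the failing set is empty, so this is only a missing trivial case, and the quantifier ordering you flag at the end is fine exactly as you resolve it (the constants $\theta,\theta'$ must come from an a priori $C^1$-ball around $f$ fixed before $\eta$ and $l$; the order used in the body of your argument, with $\mathcal{U}$ chosen after $\eta$, would otherwise be circular). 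Second, the whole second stage can be compressed, which also dissolves that ordering issue: apply the maximal ergodic theorem directly to $\psi=\Phi^{E}_{l,g}+\alpha$ and $T=g^{-l}$ on the failing set $B=\{\sup_N S_N\psi>0\}$ to get $0\le\int_B\psi\,d\nu$, and since $\psi=(\Phi^{E}_{l,g}+\alpha_1)-\delta_1\le(\Phi^{E}_{l,g}+\alpha_1)^+-\delta_1$, this yields $\nu(B)\le\delta_1^{-1}\int(\Phi^{E}_{l,g}+\alpha_1)^+\,d\nu<2\eta/\delta_1$, with the same bound for the $F$-half under $T=g^{l}$; so the Chebyshev bound on $K^c$, the excursion count, and the constants $C_E,C_F,\theta,\theta'$ are unnecessary, and choosing $\eta<\varepsilon\delta_1/4$ at the outset finishes the proof. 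Your parenthetical claim that a direct maximal argument ``only yields'' a bound bounded away from $1$ is true only if one uses nothing beyond $\int\psi\,d\nu<0$ and a sup bound; with the smallness of $\int(\Phi^{E}_{l,g}+\alpha_1)^+\,d\nu$ that you have already arranged, the direct argument gives the full strength.
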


Using Liao-Gan's shadowing lemma~\cite{39, 30}, Mi and Cao established the following lemma.  Hyperbolic periodic points in certain Pesin blocks accumulate on the support of the invariant measure restricted to a Pesin block of positive measure. The periodic points in these Pesin blocks are considered as candidates for the elements in our skeleton (see the proof of Lemma~\ref{A}).

\begin{lem}\cite{CM}\label{zhuqia}
For any $l\in\mathbb{N}$ and $\alpha>0$, there exists a constant $\eta(l,\alpha)>0$ such that the following holds: 	for any $f$-invariant measure $\mu$, if $\mu(\Lambda_f(\alpha,l))>0$, then for any point $x\in \supp(\mu|_{\Lambda_f(\alpha,l)})$ and any $\tilde{a}\le \eta(l,\alpha)$, the neghborhood $B(x,\tilde{a})$ contains a hyperbolic periodic point $p\in\Lambda(\frac{\alpha}{2},l)$ such that
\[
W^u_{\delta(\frac{\alpha}{2},l)}(p)\cap W^s_{\delta(\alpha,l)}(x)\ne\emptyset, W^s_{\delta(\frac{\alpha}{2},l)}(p)\cap W^u_{\delta(\alpha,l)}(x)\ne\emptyset.\]
\end{lem}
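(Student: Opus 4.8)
The plan is to combine Poincaré recurrence inside the Pesin block with Liao--Gan's shadowing lemma in its ``quasi-hyperbolic string'' form. Fix $l\in\mathbb{N}$ and $\alpha>0$, and let $\tau=\tau(\tfrac\alpha2,l)\in(0,1)$ be the contraction rate and $\delta(\tfrac\alpha2,l),\,\delta(\alpha,l)>0$ the local-manifold radii furnished by Lemma~\ref{kakaka}. Liao--Gan's shadowing lemma (\cite{39,30}), applied with the dominated splitting $E=E^{uu}\oplus E^{cu}$ over $F=E^{cs}$ and with the hyperbolicity rate $\tfrac\alpha2$, produces constants $L=L(\tfrac\alpha2,l)>1$ and $\delta_0=\delta_0(\tfrac\alpha2,l)>0$ such that any $\tfrac\alpha2$-quasi-hyperbolic periodic $\varepsilon$-pseudo-orbit with $\varepsilon<\delta_0$ is $L\varepsilon$-shadowed by a genuine periodic orbit whose tangent bundle splits, along that orbit, into a contracting $F$-part and an expanding $E$-part with rate $\tfrac\alpha2$; in particular the shadowing periodic point lies in $\Lambda(\tfrac\alpha2,l)$ and has stable index $\dim E^{cs}$. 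One then sets $\eta(l,\alpha)$ to be a small multiple of $\min\{\delta_0,\ \delta(\tfrac\alpha2,l),\ \delta(\alpha,l),\ \varepsilon_0\}$, with the multiplicative factor absorbing $L$, $C'$ and the uniform transversality constants of Lemma~\ref{kakaka}.

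\smallskip

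First I would produce the approximate periodic orbit. Let $x\in\supp(\mu|_{\Lambda_f(\alpha,l)})$ and fix $\tilde a\le\eta(l,\alpha)$. By definition of support, the set $\Lambda_f(\alpha,l)\cap B(x,\tilde a/(4L))$ has positive $\mu$-measure. Applying Poincaré recurrence to the $f^l$-invariant set $\Lambda_f(\alpha,l)\cap B(x,\tilde a/(4L))$, there exist a point $y$ in this set and an integer $m=m(y)\in l\mathbb{N}$ with $f^{m}(y)\in\Lambda_f(\alpha,l)\cap B(x,\tilde a/(4L))$; in particular $d(f^{m}(y),y)<\tilde a/(2L)$. (Using a return time that is a multiple of $l$ lets us read the Pesin-block inequalities directly.) The finite orbit segment $y,f(y),\dots,f^{m}(y)$, closed up with the small jump from $f^m(y)$ back to $y$, is a periodic $\varepsilon$-pseudo-orbit with $\varepsilon=d(f^m(y),y)<\delta_0$. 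Crucially it is $\tfrac\alpha2$-quasi-hyperbolic: since $y\in\Lambda_f(\alpha,l)$, the forward products $\prod_{i=0}^{k-1}\|Df^l|_{F(f^{il}(y))}\|\le e^{-\alpha kl}$ give contraction of $F$ along the whole segment; since $f^m(y)\in\Lambda_f(\alpha,l)$, the products $\prod_{i=0}^{k-1}\|Df^{-l}|_{E(f^{m-il}(y))}\|\le e^{-\alpha kl}$ give expansion of $E$ along the whole segment; and domination upgrades these one-sided estimates (with rate $\alpha$) to the genuinely quasi-hyperbolic estimates with the slightly weaker rate $\tfrac\alpha2$ over the closed-up loop.

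\smallskip

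Next I would invoke Liao--Gan: the above periodic pseudo-orbit is $L\varepsilon$-shadowed by a true periodic orbit, say through $p$, with $L\varepsilon<\tilde a/2$, so $d(p,x)\le d(p,y)+d(y,x)<\tilde a/2+\tilde a/(4L)<\tilde a$, i.e. $p\in B(x,\tilde a)$; moreover $p\in\Lambda(\tfrac\alpha2,l)$ with stable index $\dim E^{cs}$, as recorded above. It remains to produce the two manifold intersections. By Lemma~\ref{kakaka}, $p$ carries $C^1$ local unstable and stable disks $W^u_{\delta(\frac\alpha2,l)}(p)$, $W^s_{\delta(\frac\alpha2,l)}(p)$ tangent to $E$ and $F$, and $x$ carries $W^u_{\delta(\alpha,l)}(x)$, $W^s_{\delta(\alpha,l)}(x)$ tangent to $E$ and $F$; the ``moreover'' part of Lemma~\ref{kakaka} (equivalence of intrinsic and ambient metrics on disks tangent to $E$, together with the dominated-splitting transversality of disks tangent to $E$ versus disks tangent to $F$) shows that once $p$ is within the threshold $\eta(l,\alpha)$ of $x$, the disk $W^u_{\delta(\frac\alpha2,l)}(p)$ meets $W^s_{\delta(\alpha,l)}(x)$ transversally, and symmetrically $W^s_{\delta(\frac\alpha2,l)}(p)$ meets $W^u_{\delta(\alpha,l)}(x)$. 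This yields the asserted nonempty intersections.

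\smallskip

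The main obstacle is the verification that the closed-up orbit segment is quasi-hyperbolic in the precise sense required by Liao--Gan, with a rate ($\tfrac\alpha2$) and threshold constants that depend only on $(l,\alpha)$ and not on $\mu$ or on $x$: one must check that one-sided control of $F$ from the left endpoint and of $E$ from the right endpoint, glued across the small jump, genuinely produces the required two-sided estimates on every sub-segment of the loop, which is exactly where the domination $E\oplus_\succ F$ and the fact that the jump is taken $\le\delta_0$ are used. The remaining steps --- the recurrence argument, the bookkeeping of the radii $\delta(\alpha,l)$ versus $\delta(\tfrac\alpha2,l)$, and the transversality of $E$- and $F$-disks --- are routine given Lemma~\ref{kakaka}.
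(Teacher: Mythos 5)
Your proposal is correct and follows essentially the same route as the paper: the paper does not prove this lemma itself but imports it from Mi--Cao \cite{CM}, noting explicitly that it is established via Liao--Gan's shadowing lemma, which is exactly your Katok-type closing argument (Poincar\'e recurrence in the Pesin block to produce an $\alpha$-quasi-hyperbolic string with nearby endpoints, Liao--Gan shadowing to obtain a periodic point of $\Lambda(\frac{\alpha}{2},l)$ near $x$, and the uniform-size local manifolds of Lemma~\ref{kakaka} to force the two transverse intersections). The only slips are cosmetic: the set $\Lambda_f(\alpha,l)\cap B(x,\tilde a/(4L))$ is not $f^l$-invariant (Poincar\'e recurrence only needs invariance of $\mu$), and the two one-sided block estimates at $y$ and $f^m(y)$ already constitute the quasi-hyperbolic string conditions at rate $\alpha$, the weakening to $\frac{\alpha}{2}$ being required only when transferring the estimates to the shadowing periodic orbit.
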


For any $x\in M$, the {\it stable set} of $x$ is defined as
$$
W^s(x,f):=\{y\in M:d(f^{k}(x),f^{k}(y))\to 0  ~~`\mbox{as k}\to+\infty\}.
$$
Similarly, the {\it unstable set} of $x$ is defined as $W^u(x,f):=W^s(x,f^{-1})$. It is clear from the definition that $f^{-1}(W^u(x,f))=W^u(f^{-1}(x),f)$. The sets $W^s(x,f)$ and $W^u(x,f)$ are uniquely determined by this definition.
Abdenur, Bonatti and Crovisier \cite{ABC} established the following proposition. In the $C^1$ setting, under the assumption of domination ($E\oplus_\succ F$),
unstable sets of almost every point are injectively immersed $C^1$-manifolds with respect to hyperbolic measures (see \cite[Proposition~8.9]{ABC} for further details). 

\begin{pro}\cite{ABC}\label{stablemanifold}
If the invariant measure $\mu$ satisfies the conditions stated in Lemma~\ref{yizhixing} and is ergodic, then for $\mu$-almost every $x$, $W^s(x,f)$ is an injectively immersed $C^1$-manifold of dimension $\dim(F)$, and $W^u(x,f)$ is an injectively immersed $C^1$-manifold of dimension $\dim(E)$.
\end{pro}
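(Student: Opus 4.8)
The plan is to deduce the statement from the local stable/unstable manifold theory already packaged in Lemma~\ref{kakaka}, globalized along a recurrent orbit, and to obtain the unstable conclusion from the stable one by time reversal. First I would fix $\alpha_0$ smaller than the Lyapunov spectral gap of $\mu$ (supplied by Oseledets' theorem together with the hypothesis of Lemma~\ref{yizhixing}, which forces all exponents of $\mu$ along $E$ to be $\ge\alpha_0>0$ and all exponents along $F$ to be $\le-\alpha_0<0$); then for every $0<\alpha<\alpha_0$, applying Lemma~\ref{yizhixing} with $g=f$, $\nu=\mu$ and $\varepsilon\to 0$ gives $\mu\big(\bigcup_{l\in\mathbb{N}}\Lambda_f(\alpha,l)\big)=1$, with the blocks increasing in $l$. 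So it suffices to prove the two assertions for $\mu$-almost every $x$ in a fixed block $\Lambda_f(\alpha,l)$ of positive measure, and then let $\alpha\downarrow 0$, $l\uparrow\infty$. On such a block, Lemma~\ref{kakaka} provides, at each point, a $C^1$-embedded local stable disk $W^s_{\delta}(x)$ of dimension $\dim(F)$ tangent to $F$ and a $C^1$-embedded local unstable disk $W^u_{\delta}(x)$ of dimension $\dim(E)$ tangent to $E$ (here $\delta=\delta(\alpha,l)$), with the uniform forward/backward contraction estimates $d(f^{\pm n}y,f^{\pm n}z)\le C\tau^n d(y,z)$ on them.

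Next I would globalize the stable disk. For $\mu$-a.e.\ $x\in\Lambda_f(\alpha,l)$, Poincar\'e recurrence yields return times $n_k\to\infty$ with $f^{n_k}(x)\in\Lambda_f(\alpha,l)$; passing to a subsequence we may assume the gaps $n_{k+1}-n_k$ are so large that $C\tau^{\,n_{k+1}-n_k}\delta<\delta$. Using the uniform contraction on local stable disks together with the dynamical characterization of $W^s_\delta(\cdot)$ (within distance $\delta$, it is exactly the set of nearby points whose forward orbit stays $\delta$-close), one gets $f^{\,n_{k+1}-n_k}\!\big(W^s_\delta(f^{n_k}x)\big)\subset W^s_\delta(f^{n_{k+1}}x)$, hence the sets $\Sigma_k:=f^{-n_k}\big(W^s_\delta(f^{n_k}x)\big)$ form an increasing chain of injectively immersed $C^1$-disks of dimension $\dim(F)$, all carrying the germ of $\Sigma_0=W^s_\delta(x)$ at $x$. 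Their union $\Sigma(x):=\bigcup_k\Sigma_k$ is therefore an injectively immersed $C^1$-manifold of dimension $\dim(F)$, and the contraction estimate gives $\Sigma(x)\subset W^s(x,f)$ directly.

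The main obstacle — and the only place where the domination hypothesis $E\oplus_\succ F$ is genuinely needed — is the reverse inclusion $W^s(x,f)\subset\Sigma(x)$: one must show that any $y$ with $d(f^kx,f^ky)\to 0$ eventually satisfies $f^{n_k}y\in W^s_\delta(f^{n_k}x)$, so that $y\in\Sigma_k$ for some $k$. In the $C^{1+}$ setting this is classical Pesin theory; in the present merely-$C^1$ setting it is precisely the content isolated by Abdenur, Bonatti and Crovisier in \cite[Proposition~8.9]{ABC} and the surrounding treatment of non-uniformly hyperbolic measures for $C^1$ dynamics, where the dominated splitting replaces the H\"older control of the nonlinear terms and guarantees both that the plaque family is locally invariant and that no direction transverse to the stable plaque can remain bounded under forward iteration. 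I would invoke their argument at this point, having reduced the problem above to the behaviour of a fixed Pesin block along a recurrent orbit. The remaining points are routine: $\Sigma(x)$ is free of self-intersections because local stable manifolds inside a block are unique, so overlapping plaques along a common orbit coincide; and the dimension counts follow from tangency of the plaques to $F$ (resp.\ $E$).

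Finally, the unstable assertion follows by applying the stable case to $f^{-1}$: a dominated splitting is invariant under time reversal, so $E\oplus_\succ F$ remains one for $f^{-1}$, $\mu$ stays ergodic with its exponents along $E$ now negative and along $F$ now positive, and $W^u(x,f)=W^s(x,f^{-1})$; hence for $\mu$-a.e.\ $x$ it is an injectively immersed $C^1$-manifold of dimension $\dim(E)$, completing the proof.
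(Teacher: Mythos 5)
First, note that the paper offers no proof of this proposition at all: it is quoted verbatim from Abdenur--Bonatti--Crovisier, with a pointer to \cite[Proposition~8.9]{ABC}, so there is no internal argument to compare yours against. Your proposal, read as a proof, ultimately does the same thing: after setting up Pesin blocks via Lemma~\ref{yizhixing} and local plaques via Lemma~\ref{kakaka}, you defer the decisive inclusion $W^s(x,f)\subset\Sigma(x)$ to ``their argument''. Since that inclusion (together with the uniqueness/coherence of the plaques) is essentially the whole content of the cited proposition, what you have written is a reduction-plus-citation rather than an independent proof. That is acceptable in the context of this paper, but you should be aware of where the reduction itself leans on unproved claims.

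Concretely, two steps in your globalization are not supplied by Lemma~\ref{kakaka} and are exactly the nontrivial $C^1$ issues. First, you use a ``dynamical characterization'' of $W^s_\delta(\cdot)$ (that within distance $\delta$ it is precisely the set of points whose forward orbit stays $\delta$-close) to get $f^{\,n_{k+1}-n_k}\bigl(W^s_\delta(f^{n_k}x)\bigr)\subset W^s_\delta(f^{n_{k+1}}x)$ and to conclude that overlapping plaques along an orbit coincide. Lemma~\ref{kakaka} is a plaque-family statement: it gives existence of $C^1$ disks tangent to $F$ with the contraction estimate, but neither uniqueness of these disks nor the asserted characterization; in the merely $C^1$ setting (no H\"older control on $Df$) these properties must be extracted from the dominated splitting, and that is part of what \cite{ABC} proves, not an input you may quote from Lemma~\ref{kakaka}. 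Second, the reverse inclusion $W^s(x,f)\subset\Sigma(x)$ --- that a point whose orbit is forward-asymptotic to the orbit of $x$ must eventually land in a local plaque --- is again the heart of \cite[Proposition~8.9]{ABC}; your text correctly identifies it as the main obstacle but then simply invokes the source. The easy parts you do prove (full measure of $\bigcup_l\Lambda_f(\alpha,l)$ from Lemma~\ref{yizhixing}, the increasing union $\Sigma(x)\subset W^s(x,f)$, the dimension count, and the time-reversal argument for $W^u$) are fine, so the proposal is sound as a framing of why the citation applies, but it should not be presented as a proof of the proposition.
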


\subsection{Properties of Maps Factoring Over Anosov}

In this subsection, we discuss the properties of $c$-Gibbs $u$-states for maps that factor over Anosov systems.

Given a hyperbolic linear automorphism $A:\mathbb{T}^d\rightarrow \mathbb{T}^d$, 
if $f:M\to M$ factors over $A$, we say that $g$  {\it factors over the same Anosov as $f$} if $g$ also factors over $A$. Now,  suppose $f:M\to M$ factors over $A$ with the splitting $TM=E^{uu}\oplus_{\succ}E^{cu} \oplus_{\succ}E^{cs}$  in this subsection.

The following proposition from \cite{UVYY} lists fundamental properties of $c$-Gibbs $u$-states,  which are crucial for the proof of Lemma~\ref{linyu}.  By combining Lemma~\ref{zhuqia} with Theorem~\ref{uniform}, we establish that  the maximum number of $c$-$cu$-states with pairwise distinct supports are finite.  To prove Theorem~B, we primarily rely on Lemma~\ref{kakaka} and the item~\ref{a} of following proposition.

\begin{pro}\cite{UVYY}\label{main.pro}The following properties hold:
\begin{enumerate}
\item\label{c} The set of measures of maximal $u$-entropy, $Gibbs^u_c(f)$, is non-empty, convex, and compact.
\item\label{a} If $f_n$ factor over the same Anosov as $f$, where $n\in\mathbb{N}$ and  $f_n\rightarrow f$ in the $C^1$-topology, then
\[
\limsup_{n\rightarrow\infty}Gibbs^u_c(f_n)\subset Gibbs^u_c(f).
\]
\item\label{d} Almost every ergodic component of any $\mu\in Gibbs^u_c(f)$ is a $c$-Gibbs $u$-state.  Specifically, the set $\{\mu_P,P\in\P\}$ forms the ergodic decomposition of $\mu$, where $\hat{\mu}$ is the corresponding quotient measure in $\P$. This decomposition satisfies the following properties:
\begin{itemize}
\item the map $P\mapsto\mu_P(E)$ is measurable for every measurable set $E\subset M$;
\item $\mu(E)=\int\mu_P(E)d\hat{\mu}_P$ for every measurable set $E\subset M$;
\item  $\mu_P$ is an ergodic $c$-Gibbs $u$-state for $\hat{\mu}$-almost every $P\in\P$.
\end{itemize}
\item\label{f} Any accumulation point of the sequence \[\mu_n=\frac{1}{n}\sum_{j=0}^{n-1}f_*^j\nu^{uu}\] is a $c$-Gibbs $u$-state, where $\nu^{uu}$ is the reference measure on any strong-unstable plaque.
\end{enumerate}
\end{pro}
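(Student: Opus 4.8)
The plan is to deduce each item from the defining disintegration property of $c$-Gibbs $u$-states together with the semiconjugacy $\pi\circ f = A\circ\pi$, following the strategy of Ures--Viana--F.~Yang--J.~Yang. The key organizing fact is that, because $\pi$ maps each strong-unstable plaque $\F^{uu}_i(x)$ homeomorphically onto the Markov rectangle piece $W^u_i(\pi(x))$ and pushes a reference measure $\nu^{uu}_{i,x}$ to the normalized Lebesgue measure $\vol^u_{i,\pi(x)}$, the condition ``$\mu$ is a $c$-Gibbs $u$-state'' can be tested plaque-by-plaque and is preserved under the natural operations (convex combinations, weak$^*$ limits of Cesàro averages of pushforwards of reference measures, ergodic decomposition). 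I would set up once and for all the finite $\pi^{-1}$-Markov partition $\mathcal{M}=\{\mathcal{M}_i\}$ and the measurable family of strong-unstable plaques, and record that the reference measures on plaques are uniquely determined up to the choice inside a fibre but that the disintegration condition is intrinsic.

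For item~\ref{c}, non-emptiness is exactly the theorem of Hu--Wu--Zhu on existence of measures of maximal $u$-entropy combined with the identification (stated in the excerpt) of that set with $\mathrm{Gibbs}^u_c(f)$; convexity is immediate because the Rokhlin disintegration of a convex combination $t\mu_1+(1-t)\mu_2$ restricted to $\mathcal{M}_i$ is the corresponding convex combination of disintegrations, hence still equals $\nu^{uu}_{i,x}$ a.e.; compactness follows since being a $c$-Gibbs $u$-state is a closed condition in the weak$^*$ topology — this is essentially item~\ref{f} applied along a convergent sequence, or a direct argument disintegrating a weak$^*$ limit using continuity of the plaque families. For item~\ref{a}, the upper semicontinuity $\limsup_n \mathrm{Gibbs}^u_c(f_n)\subset \mathrm{Gibbs}^u_c(f)$ is the main substantive point: I would take $\mu_n\in\mathrm{Gibbs}^u_c(f_n)$ with $\mu_n\to\mu$, use that all $f_n$ factor over the \emph{same} $A$ so they share the combinatorial data of $\mathcal{R}$ (hence of $\mathcal{M}$), invoke $C^1$-continuity of the strong-unstable foliations and of the plaque families $\F^{uu}_{i}(\cdot,f_n)\to\F^{uu}_i(\cdot,f)$, and pass to the limit in the disintegration identity $(\mu_n)|_{\mathcal{M}_i}$-a.e.\ $= \nu^{uu}_{i,x,f_n}$; the pushforward-to-Lebesgue normalization is stable under this limit because $\pi$ is fixed and Lebesgue on $W^u_i$ does not move. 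Items~\ref{d} and~\ref{f} are then obtained by standard ergodic-decomposition and Cesàro-average arguments: for~\ref{d} one disintegrates $\mu=\int \mu_P\, d\hat\mu(P)$ and checks that the plaque-disintegration of $\mu|_{\mathcal{M}_i}$ being $\nu^{uu}_{i,x}$ forces, after a second application of Rokhlin's theorem for the refinement of partitions, that $\hat\mu$-a.e.\ $\mu_P$ has the same plaque-disintegration, hence is itself a $c$-Gibbs $u$-state; for~\ref{f} one notes $f_*$ sends a measure whose plaque-disintegrations are reference measures to another such measure (because $f$ maps plaques into plaques compatibly with $A$ mapping $W^u_i$ into $W^u_j$ and Lebesgue to Lebesgue up to Jacobian normalization, which is exactly absorbed by re-normalizing the reference measure), so each $\mu_n=\frac1n\sum_{j=0}^{n-1}f_*^j\nu^{uu}$ is an average of ``plaque-reference'' measures, and any weak$^*$ accumulation point inherits the property by the closedness used in~\ref{c}.

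The hard part will be the passage to the limit in items~\ref{a} and~\ref{f}: one must justify that the Rokhlin disintegration along a \emph{varying} (or, for~\ref{f}, a fixed but iterated) measurable partition into strong-unstable plaques converges, which requires the uniform transversality and continuity estimates for the strong-unstable foliation under $C^1$-perturbation and the fact that $\pi$ is a uniform homeomorphism on plaques — all of which are available from the structure of maps factoring over Anosov. I would isolate this as a lemma: if $g_n\to g$ in $C^1$ all factoring over $A$, if $\theta_n\to\theta$ weak$^*$, and if each $\theta_n|_{\mathcal{M}_i}$ disintegrates into reference measures for $g_n$, then $\theta|_{\mathcal{M}_i}$ disintegrates into reference measures for $g$; everything else in the proposition then follows formally. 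Since the proposition is quoted from \cite{UVYY}, I would in practice cite that paper for this lemma and only sketch the reduction above.
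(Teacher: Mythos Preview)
The paper does not prove this proposition at all: it is stated with the citation \cite{UVYY} and used as a black box, with the text continuing immediately to ``Based on above Proposition~\ref{main.pro}, we can obtain some following results\ldots''. So there is nothing to compare against beyond the bare citation.

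Your proposal is consistent with this: you correctly recognize at the end that the result is quoted from \cite{UVYY} and that in practice one would simply cite it. The sketch you give of the underlying mechanism (testing the $c$-Gibbs $u$-state condition plaque-by-plaque via the semiconjugacy $\pi$, stability of reference measures under $C^1$ perturbation of the strong-unstable foliation when all maps factor over the same $A$, and the standard Rokhlin/ergodic-decomposition arguments) is a faithful outline of how \cite{UVYY} establishes these facts, and your isolation of the limiting lemma for items~\ref{a} and~\ref{f} identifies the genuine technical core. For the purposes of this paper, however, all of that sketch is superfluous: matching the paper means stating the proposition and citing \cite{UVYY}.
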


Based on above Proposition~\ref{main.pro}, we can obtain some following results, which will be useful in the forthcoming proofs.

\begin{lem}\label{xin} For any positive integer $m\in\mathbb{N}$, if $\mu\in EGibbs^u_c(f^m)$, then \[\mu_0=\frac{1}{m}\sum_{i=0}^{m-1}f_*^i\mu \in EGibbs^u_c(f),\]
where $f^i_*\mu=\mu(f^{-i})$ for every $i=0,1,...,m-1$.
\end{lem}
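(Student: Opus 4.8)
### Proof Proposal for Lemma~\ref{xin}

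\textbf{Overview of the strategy.} The statement is a standard "power trick" for $c$-Gibbs $u$-states: we want to descend an ergodic $c$-Gibbs $u$-state for $f^m$ to an ergodic $c$-Gibbs $u$-state for $f$ by averaging over the $f$-orbit. There are three things to verify for $\mu_0 = \frac{1}{m}\sum_{i=0}^{m-1} f_*^i\mu$: (i) it is $f$-invariant; (ii) it is a $c$-Gibbs $u$-state of $f$; (iii) it is ergodic for $f$. Part (i) is immediate since $f_*\mu_0 = \frac{1}{m}\sum_{i=1}^{m} f_*^i\mu = \mu_0$ using $f_*^m\mu = \mu$. The plan is to handle (ii) and (iii) separately, with (ii) being the conceptual core.

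\textbf{Showing $\mu_0$ is a $c$-Gibbs $u$-state of $f$.} First I would observe that $f$ and $f^m$ factor over the same base dynamics in a compatible way: if $\pi \circ f = A \circ \pi$, then $\pi \circ f^m = A^m \circ \pi$, and the strong-unstable foliation $\F^{uu}(f) = \F^{uu}(f^m)$ coincides (same subbundle $E^{uu}$). One can take a common $\pi^{-1}$-Markov partition refinement that works for both $A$ and $A^m$, so the strong-unstable plaques and the reference measures $\nu^{uu}_{i,x}$ are the same objects for $f$ and $f^m$ (the reference measure only depends on $\pi$ pushing forward to normalized Lebesgue on unstable plaques of the base, which is a property of $E^{uu}$, not of the rate of the dynamics). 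Hence $\mu$ being a $c$-Gibbs $u$-state of $f^m$ means its Rokhlin disintegration along strong-unstable plaques coincides with the reference measures. Now, the key point is that the defining property of $c$-Gibbs $u$-states is preserved under pushforward by $f$: since $f$ maps strong-unstable plaques into strong-unstable plaques and sends reference measures to reference measures (up to the normalization encoded in the definition — this uses the Markov property exactly as in \cite{UVYY}), $f_*\mu$ is again a $c$-Gibbs $u$-state of $f^m$, and inductively so is each $f_*^i\mu$. Because $\mathrm{Gibbs}^u_c(f^m)$ and the analogous set for $f$ interact well — more precisely, since being a $c$-Gibbs $u$-state is characterized by the disintegration condition which is convex and closed — the average $\mu_0$ is a $c$-Gibbs $u$-state of $f$. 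Alternatively, and perhaps more cleanly, I would invoke item~\ref{f} of Proposition~\ref{main.pro}: pick a strong-unstable plaque and its reference measure $\nu^{uu}$; any accumulation point of $\frac{1}{n}\sum_{j=0}^{n-1} f_*^j \nu^{uu}$ is a $c$-Gibbs $u$-state of $f$. Relating the $f$-Cesàro averages of $\nu^{uu}$ to the $f^m$-Cesàro averages (splitting $j = mk + i$) and using that $\mu$ being an ergodic $c$-Gibbs $u$-state of $f^m$ arises as such an accumulation point for $f^m$, one identifies $\mu_0$ with an accumulation point of the $f$-averages.

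\textbf{Ergodicity of $\mu_0$ for $f$.} This is the routine part of the power trick. Since $\mu$ is $f^m$-ergodic, a standard argument shows that $\mu_0 = \frac{1}{m}\sum_{i=0}^{m-1} f_*^i\mu$ is $f$-ergodic: the $f_*^i\mu$ are either all equal (in which case $\mu_0 = \mu$ is already $f$-invariant and $f^m$-ergodicity plus $f$-invariance gives $f$-ergodicity) or they are mutually singular, in which case $\{f_*^i\mu\}_{i=0}^{m-1}$ forms a single $f$-orbit in the space of $f^m$-ergodic measures, and their average is $f$-ergodic by the classical lemma on ergodic decompositions of $f$ versus $f^m$. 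I would cite or quickly reproduce this standard fact. Combining invariance, the $c$-Gibbs $u$-state property, and ergodicity gives $\mu_0 \in EGibbs^u_c(f)$.

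\textbf{Expected main obstacle.} The only genuinely delicate point is verifying that the $c$-Gibbs $u$-state property is stable under $f_*$ and passes to the average — i.e., bookkeeping that the reference measures for $f$ and $f^m$ are literally the same and that the Markov structure behaves well under iteration. I expect this to follow formally from the constructions in \cite{UVYY}, so the "hard part" is mostly careful citation and setup rather than a new idea; invoking item~\ref{f} of Proposition~\ref{main.pro} sidesteps most of the disintegration bookkeeping.
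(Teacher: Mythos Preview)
Your proposal is correct and your ``alternative, cleaner'' route via item~\ref{f} of Proposition~\ref{main.pro} is exactly what the paper does: it observes that $f$ and $f^m$ share the same $\pi^{-1}$-Markov partition and reference measures, uses ergodicity of $\mu$ for $f^m$ together with the disintegration condition to realize $\mu$ as the limit $\lim_{n\to\infty}\frac{1}{n}\sum_{j=0}^{n-1} f_*^{mj}\nu^{uu}_{i,x_\mu}$, and then concludes $\mu_0 = \lim_{n\to\infty}\frac{1}{mn}\sum_{j=0}^{mn-1} f_*^{j}\nu^{uu}_{i,x_\mu}$ is a $c$-Gibbs $u$-state of $f$ by item~\ref{f}. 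The ergodicity of $\mu_0$ is handled in the paper by a one-line appeal to the standard fact you spell out in more detail.
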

\begin{proof} By the definition of ergodic measures,   $\mu_0$ is also an ergodic measure of $f$. The $\pi^{-1}$-Markov partition $\mathcal{M}$ of $f$ can be selected as the  $\pi^{-1}$-Markov partition of $f^m$ using the same $\pi$. 
Thus the two  maps $f$ and $f^m$ have the same reference measures.

From the definition of $Gibbs^u_c(f^m)$, there exists some $j$ such that $\mu(\mathcal{M}_j)>0$. The Rokhlin disintegrations of $\mu|_{\mathcal{M}_j}$ along strong unstable plaques  coincide with the corresponding reference maesures almost everywhere. By the ergodicity of $\mu$, for $\mu$-almost every $x\in\mathcal{M}_j$ and $\nu^{uu}_{j,x}$-almost every $y\in \F^{uu}_j(x)$, we have:
$$\lim_{n\rightarrow+\infty}\frac{\sum_{0\le j\le n-1}\delta_{f^{mj}(y)}}{n}=\mu.$$

For any continuous function $\phi$ on $M$, we have: 
\[\lim_{n\rightarrow+\infty}\frac{1}{n}\sum_{i=0}^{n-1}\phi(f^{mi}(y))=\int\phi d\mu.\]
It follows that
\[\int\lim_{n\rightarrow+\infty}\frac{1}{n}\sum_{i=0}^{n-1}\phi(f^{mi}(y))d\nu^{uu}_{j,x}=\int(\int\phi d\mu )d\nu^{uu}_{j,x}=\int\phi d\mu.\]
By the compactness of $M$ and the Dominated Convergence Theorem, we conclude:
\[\int\lim_{n\rightarrow+\infty}\frac{1}{n}\sum_{i=0}^{n-1}\phi(f^{mi}(y))d\nu^{uu}_{j,x}=\lim_{n\rightarrow+\infty}\int\frac{1}{n}\sum_{i=0}^{n-1}\phi(f^{mi}(y))d\nu^{uu}_{j,x}=\lim_{n\rightarrow+\infty}\int\phi d \frac{1}{n}\sum_{j=0}^{n-1}f_*^{mj}\nu_{j,x}^{uu}.\]
To sum up, there exists $x_{\mu}$ such that: $$\mu=\lim_{n\rightarrow+\infty}\frac{1}{n}\sum_{j=0}^{n-1}f_*^{mj}\nu_{j,x_{\mu}}^{uu}.$$

Thus, we have:
$$\mu_0=\lim_{n\rightarrow+\infty}\frac{1}{mn}\sum_{j=0}^{nm-1}f_*^j\nu_{j,x_{\mu}}^{uu}.$$  By item ~\ref{f} of Proposition \ref{main.pro}, $\mu_0$ is a $c$-Gibbs $u$-state.
Therefore, $\mu_0$ is an ergodic $c$-Gibbs $u$-state.
\end{proof}

\begin{lem}\label{subset}
For any positive integer $m\in\mathbb{N}$, we have $$Gibbs^u_c(f)\subset Gibbs^u_c(f^m).$$
\end{lem}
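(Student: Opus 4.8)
The plan is to unwind the definition of a $c$-Gibbs $u$-state and observe that it refers to $f$ only through objects that $f$ and $f^m$ literally share, so that membership in $Gibbs^u_c(f)$ is forced to imply membership in $Gibbs^u_c(f^m)$. First I would record the structural facts about $f^m$: it is again a partially hyperbolic diffeomorphism with the same invariant splitting $TM=E^{uu}\oplus_\succ E^{cu}\oplus_\succ E^{cs}$ (the defining inequalities hold with $(c,\lambda^m)$ in place of $(c,\lambda)$, since $D(f^m)^{\pm n}=Df^{\pm mn}$); it is dynamically coherent with $\F^c(f^m)=\F^c(f)$ and $\F^{uu}(f^m)=\F^{uu}(f)$, because these foliations are characterized by tangency to $E^{cu}\oplus E^{cs}$ and $E^{uu}$ together with invariance, which is unaffected by passing to a power; and it factors over the hyperbolic linear automorphism $A^m$ through the very same $\pi$, since iterating $\pi\circ f=A\circ\pi$ yields $\pi\circ f^m=A^m\circ\pi$, while the unstable- and center-foliation conditions in the definition of "factors over Anosov" involve only $\F^{uu}$, $\F^c$, $W^u(\cdot,A)=W^u(\cdot,A^m)$ and $W^s(\cdot,A)=W^s(\cdot,A^m)$, none of which changes when $A$ is replaced by $A^m$.

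Next I would pin down the combinatorial data. As already used in the proof of Lemma~\ref{xin}, a Markov partition $\mathcal{R}$ for $A$ is also a Markov partition for $A^m$ (the covering properties are unchanged, and the Markov transition property for $A^m$ follows by iterating that for $A$); consequently the same $\pi^{-1}$-Markov partition $\mathcal{M}=\{\mathcal{M}_i\}$ of $f$ may be taken as a $\pi^{-1}$-Markov partition of $f^m$. With this choice the strong-unstable plaque $\F^{uu}_i(x)$ of $f^m$ is the connected component of $\F^{uu}(x)\cap\mathcal{M}_i$ through $x$ — identical to the strong-unstable plaque of $f$ — and, because $\pi$ restricted to $\F^{uu}_i(x)$ is a homeomorphism onto $W^u_i(\pi(x))$, the reference measure $\nu^{uu}_{i,x}$ is the unique probability on $\F^{uu}_i(x)$ with $\pi_*\nu^{uu}_{i,x}=\vol^u_{i,\pi(x)}$, hence is the same for $f$ and for $f^m$.

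Finally I would conclude. Let $\mu\in Gibbs^u_c(f)$; then $\mu$ is $f$-invariant, hence $f^m$-invariant, so it is an admissible candidate. Fix an index $i$ with $\mu(\mathcal{M}_i)>0$ (nothing is required for the other indices). The partition of $\mathcal{M}_i$ into strong-unstable plaques $\{\F^{uu}_i(x)\}$ is one and the same measurable partition for $f$ and for $f^m$, so the Rokhlin disintegration of $\mu|_{\mathcal{M}_i}$ along it produces the same ($\mu$-a.e. unique) family of conditional measures in both settings; by the hypothesis $\mu\in Gibbs^u_c(f)$ this family agrees $\mu$-a.e. with $x\mapsto\nu^{uu}_{i,x}$, which is exactly the condition defining $\mu\in Gibbs^u_c(f^m)$. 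As $i$ was arbitrary, $\mu\in Gibbs^u_c(f^m)$, and therefore $Gibbs^u_c(f)\subset Gibbs^u_c(f^m)$. There is no genuine dynamical obstacle here; the only point that calls for (minor) care is the Markov-partition compatibility used in the second paragraph, and that is already invoked in the proof of Lemma~\ref{xin}, so it may be taken for granted. (Note also that the reverse inclusion is false in general, since an ergodic $c$-Gibbs $u$-state of $f^m$ need not be $f$-invariant.)
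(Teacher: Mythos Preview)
Your proof is correct and follows essentially the same approach as the paper's own proof: both argue that the $\pi^{-1}$-Markov partition, the strong-unstable plaques, and the reference measures are literally the same for $f$ and $f^m$, and that $f$-invariance implies $f^m$-invariance, so the defining condition for $Gibbs^u_c$ carries over unchanged. The paper's version is simply more terse, while you have spelled out the supporting facts (partial hyperbolicity, dynamical coherence, factoring over $A^m$) in greater detail.
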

\begin{proof}
Notice that the $\pi^{-1}$-Markov partition of $f$ is also a $\pi^{-1}$-Markov partition of $f^m$, with the same $\pi$. Moreover,  invariant measures of $f$ are also invariant measures of $f^m$. Therefore, by the definition of $Gibbs^u_c(f)$, it follows that any $c$-Gibbs $u$-state  for $f$ is also a $c$-Gibbs $u$-state  for $f^m$. Hence, $Gibbs^u_c(f)\subset Gibbs^u_c(f^m)$, which  completes the proof.
\end{proof}

\subsection{Properties of Maps Factoring Over Anosov with a $c$-Mostly Expanding  and $c$-Mostly Contracting Center}

In this subsection, $f$ factors over Anosov with the partially
hyperbolic splitting $TM = E^{uu} \oplus_{\succ} E^{cu} \oplus_{\succ} E^{cs}$ such that $f$ admits a $c$-mostly expanding  and $c$-mostly contracting center. For simplicity of notation, recall that $E=E^{uu}\oplus E^{cu}$ and $F=E^{cs}$.

The following lemmas, inspired by the works of  \cite[Proposition~5.4]{Yan} and \cite[Proposition~3.3]{ref7}, provide insight into the behavior of $c$-Gibbs $u$-states under such dynamics.
\begin{lem}\label{daijia}
There exist constants $N\in\mathbb{N}$ and $a>0$  such that for any $c$-Gibbs $u$-state $\mu\in Gibbs_c^u(f)$, the following inequality holds:
$$
\int \log \| Df^{-N}|_{E(x)}\| d \mu<-a.
$$
In addition, there exist constants $L\in\mathbb{N}$ and $b>0$  such that for any $c$-Gibbs $u$-state $\mu\in Gibbs_c^u(f)$, the  inequality 
$$
\int \log \| Df^{L}|_{F(x)}\| d \mu<-b
$$
also holds.
\end{lem}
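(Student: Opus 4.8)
The plan is to argue by contradiction using the compactness of $\mathrm{Gibbs}^u_c(f)$ (item~\ref{c} of Proposition~\ref{main.pro}) together with the ergodic decomposition (item~\ref{d}) and the defining property of a $c$-mixed center. I will treat the two inequalities symmetrically; let me describe the argument for the first one, the bound $\int \log\|Df^{-N}|_{E(x)}\|\,d\mu<-a$, and note that the second is obtained by applying the same reasoning to $f^{-1}$ (or directly to $F$), with the roles of $E$ and $F$ interchanged. For each fixed $N\in\mathbb{N}$, consider the continuous function $\varphi_N(x)=\log\|Df^{-N}|_{E(x)}\|$ on $M$ (continuity of $x\mapsto E(x)$ gives continuity of $\varphi_N$), and the functional $\mu\mapsto \int\varphi_N\,d\mu$ on the space of invariant measures, which is continuous in the weak$^*$ topology. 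The first step is to show that for \emph{some} choice of $N$, this functional is strictly negative on all of $\mathrm{Gibbs}^u_c(f)$; then its supremum over the compact set $\mathrm{Gibbs}^u_c(f)$ is attained and strictly negative, which furnishes the constant $a>0$.

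The heart of the matter is producing that single $N$ that works uniformly. First I would handle an arbitrary ergodic $c$-Gibbs $u$-state $\nu$. Since $f$ has a $c$-mixed center, $\nu$ has only positive Lyapunov exponents along $E^{cu}$; combined with uniform expansion along $E^{uu}$, this forces all Lyapunov exponents of $\nu$ along $E=E^{uu}\oplus E^{cu}$ to be strictly positive. By the subadditive ergodic theorem applied to the cocycle $n\mapsto \log\|Df^{-n}|_{E(x)}\|$, the limit $\lim_{n\to\infty}\frac1n\log\|Df^{-n}|_{E(x)}\|$ equals, $\nu$-a.e., minus the smallest Lyapunov exponent of $\nu$ along $E$, hence is a strictly negative constant; integrating (using dominated convergence, the cocycle being dominated by $n\|Df^{-1}\|_{\infty}$ in absolute value) gives that $\frac1n\int\log\|Df^{-n}|_{E(x)}\|\,d\nu\to c(\nu)<0$. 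So for each ergodic $\nu$ there is an $N(\nu)$ with $\int\varphi_{N(\nu)}\,d\nu<0$. The obstacle is upgrading this $\nu$-dependent, and then ergodic-component-dependent, statement to a single $N$ valid for every (not necessarily ergodic) $\mu\in\mathrm{Gibbs}^u_c(f)$ simultaneously. This is exactly where compactness of $\mathrm{Gibbs}^u_c(f)$ and the structure of the problem are used: by submultiplicativity, $N\mapsto \frac1N\int\varphi_N\,d\mu$ is a subadditive-type sequence, and $\inf_N \frac1N\int\varphi_N\,d\mu = \lim_N \frac1N\int\varphi_N\,d\mu$, which is $\le c(\mu):=\int c(\nu)\,d\hat\mu$ by item~\ref{d}; one checks $c(\mu)<0$ for every $\mu\in\mathrm{Gibbs}^u_c(f)$. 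Then for each such $\mu$ there is $N_\mu$ with $\int \varphi_{N_\mu}\,d\mu<0$, and by weak$^*$ continuity of $\mu\mapsto\int\varphi_{N_\mu}\,d\mu$ this persists on a neighborhood of $\mu$; the compact set $\mathrm{Gibbs}^u_c(f)$ is covered by finitely many such neighborhoods corresponding to $N_{\mu_1},\dots,N_{\mu_k}$. To collapse these finitely many exponents into one, take $N$ to be a common multiple and use submultiplicativity: on the neighborhood of $\mu_j$, $\int\varphi_N\,d\mu \le \frac{N}{N_{\mu_j}}\int\varphi_{N_{\mu_j}}\,d\mu<0$ (since each term in the telescoped product of length $N/N_{\mu_j}$ has the same negative integral by $f$-invariance of $\mu$). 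Hence $\int\varphi_N\,d\mu<0$ for all $\mu\in\mathrm{Gibbs}^u_c(f)$.

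With this $N$ fixed, the functional $\mu\mapsto\int\varphi_N\,d\mu$ is weak$^*$ continuous and strictly negative on the nonempty compact set $\mathrm{Gibbs}^u_c(f)$, so it attains a maximum value $-a<0$; this gives $\int\log\|Df^{-N}|_{E(x)}\|\,d\mu\le -a$ for all $\mu\in\mathrm{Gibbs}^u_c(f)$, as claimed. For the second inequality, repeat the entire argument with the cocycle $n\mapsto\log\|Df^{n}|_{F(x)}\|$ in place of $n\mapsto\log\|Df^{-n}|_{E(x)}\|$: here the $c$-mixed center hypothesis gives that every ergodic $c$-Gibbs $u$-state has only \emph{negative} Lyapunov exponents along $F=E^{cs}$, so $\frac1n\log\|Df^n|_{F(x)}\|$ converges a.e.\ to the \emph{largest} exponent along $F$, which is strictly negative; the same compactness-and-submultiplicativity packaging produces $L\in\mathbb{N}$ and $b>0$ with $\int\log\|Df^{L}|_{F(x)}\|\,d\mu<-b$ for all $\mu\in\mathrm{Gibbs}^u_c(f)$. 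I expect the only delicate point to be the bookkeeping in the submultiplicative step that merges finitely many $N_{\mu_j}$ into one $N$; everything else is a routine combination of the ergodic theorem, dominated convergence, and compactness.
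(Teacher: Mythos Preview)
Your proposal is correct and follows essentially the same approach as the paper's proof: for each $\mu\in\mathrm{Gibbs}^u_c(f)$ find an $N_\mu$ with $\int\varphi_{N_\mu}\,d\mu<0$, extend to a weak$^*$ neighborhood, cover the compact set $\mathrm{Gibbs}^u_c(f)$ by finitely many such neighborhoods, and merge the finitely many $N_{\mu_j}$ into a single $N$ via submultiplicativity (chain rule). The only cosmetic difference is that the paper works directly with an arbitrary $\mu\in\mathrm{Gibbs}^u_c(f)$ using Oseledets plus dominated convergence (the $c$-mixed center hypothesis already applies to every $c$-Gibbs $u$-state, not just ergodic ones), so your detour through the ergodic decomposition in item~\ref{d} is unnecessary; also, the paper takes $N=\prod_j N_{\mu_j}$ and $a=\min_j a_{\mu_j}$ rather than extracting $a$ afterwards from the maximum of the continuous functional, but these are equivalent packagings of the same argument.
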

\begin{proof}
 For any $c$-Gibbs $u$-state $\mu$,  by  Oseledets' theorem (see \cite{VO}),  smallest extremal Lyapunov exponents along subbundle $E$ coincide with $$\lim_{n\to+\infty} -\frac{1}{n}\log\| Df^{-n}|_{E(x)}\|$$ almost everywhere.  Thus, we have:
\[
\int\lim_{n\to+\infty} -\frac{1}{n}\log\| Df^{-n}|_{E(x)}\| d\mu>0.
\]
Then:
\[
\int\lim_{n\to+\infty} \frac{1}{n}\log\| Df^{-n}|_{E(x)}\| d\mu<0.
\]
By applying the Dominated Convergence Theorem, we can exchange the limit and the integral to obtain:
\[
\lim_{n\to+\infty} \frac{1}{n}\int \log\| Df^{-n}|_{E(x)}\| d\mu=\int\lim_{n\to+\infty} \frac{1}{n}\log\| Df^{-n}|_{E(x)}\| d\mu<0.
\]
This implies that there exist constants  $N_{\mu}$ and $a_{\mu}$ such that:
\[
\frac{1}{N_{\mu}}\int log\| Df^{-N_{\mu}}|_{E(x)}\| d\mu< -a_{\mu}<0.
\]
In other words:
$$
	\int \log\| Df^{-N_{\mu}}|_{E(x)}\| d\mu<- N_{\mu}a_{\mu}<0.
$$
Now, take a neighborhood $\mathcal{V}_{\mu}$ of $\mu$ in the space of probability measures on $M$, such that the above inequality holds for all $\nu\in \mathcal{V}_{\mu}$. By the compactness of $Gibb_c^u(f)$ (see item \ref{c} of Proposition ~\ref{main.pro}), there exists a finite open covering $\{\mathcal{V}_{\mu_i}\}_{i=1}^k$ of $Gibb_c^u(f)$, such that for any $\tilde{\nu}\in\mathcal{V}_{\mu_i}$, 
\begin{equation}\label{equa}
\int \log\| Df^{-N_{\mu_i}}|_{E(x)}\| d\tilde{\nu}<- N_{\mu_i}a_{\mu_i}<0.
\end{equation}

Let $N=\prod_{i=1}^k N_{\mu_i}$ and $a=\min\{a_{\mu_1},...,a_{\mu_k}\}$. For any $\tilde{\mu}\in Gibb_c^u(f)$, there exists some $1\le i_0\le k$ such that $\tilde{\mu}\in \mathcal{V}_{\mu_{i_0}}$.  By the chain rule,  along with the previously established inequality~\ref{equa} and our setting for $\mathcal{V}_{\mu_{i_0}}$, we  can deduce:
\[
\int \log \| Df^{-N}|_{E(x)}\| d \tilde{\mu}\le \frac{N}{N_{\mu_{i_0}}}\int \log \| Df^{-N_{\mu_{i_0}}}|_{E(x)}\| d \tilde{\mu}< -Na_{\mu_{i_0}}\le -a.
\]
Thus, the desired result follows. By employing a similar approach, we can derive the second part as well.
\end{proof}

\begin{lem}\label{alpha0}
There exists a constant $\alpha_0>0$ such that for any $c$-Gibbs $u$-state $\mu$, the following two conditions hold for $\mu$-almost every $x\in M$:
\begin{itemize}
\item $\lim_{n\rightarrow+\infty}\frac{1}{n}\log\| Df^{-n}|_{E(x)}\|<-\alpha_0$;
\item $\lim_{n\rightarrow+\infty}\frac{1}{n}\log\| Df^{n}|_{F(x)}\|<-\alpha_0$.
\end{itemize}
\end{lem}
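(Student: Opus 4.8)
The plan is to extract a uniform exponential rate from the two uniform integral estimates in Lemma~\ref{daijia}, using the fact that $Gibbs^u_c(f)$ is compact and that the relevant time-averages are controlled by the ergodic decomposition. First I would apply Lemma~\ref{daijia} to obtain the fixed integers $N,L\in\mathbb{N}$ and constants $a,b>0$ with $\int\log\|Df^{-N}|_{E(x)}\|\,d\mu<-a$ and $\int\log\|Df^{L}|_{F(x)}\|\,d\mu<-b$ for \emph{every} $\mu\in Gibbs^u_c(f)$. Set $K=NL$ (or any common multiple), so by the chain rule the same type of negative-average bound holds at the single iterate $f^{\pm K}$: there are constants $a',b'>0$ with $\int\log\|Df^{-K}|_{E(x)}\|\,d\mu<-a'$ and $\int\log\|Df^{K}|_{F(x)}\|\,d\mu<-b'$ for all $\mu\in Gibbs^u_c(f)$.

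Next I would pass from integral bounds to pointwise bounds via the Birkhoff ergodic theorem applied to $f^K$. Fix an ergodic $c$-Gibbs $u$-state $\mu$; by item~\ref{d} of Proposition~\ref{main.pro} it suffices to treat ergodic ones, and indeed the statement is about "for any $c$-Gibbs $u$-state" with the conclusion holding $\mu$-a.e., so by the ergodic decomposition it is enough to prove it for ergodic $\mu$, for which almost every ergodic component is again a $c$-Gibbs $u$-state. For ergodic $\mu$, Birkhoff's theorem for the observable $\varphi(x)=\log\|Df^{-K}|_{E(x)}\|$ gives, for $\mu$-a.e.\ $x$,
\[
\lim_{m\to\infty}\frac{1}{m}\sum_{j=0}^{m-1}\varphi(f^{-jK}(x))=\int\varphi\,d\mu<-a'.
\]
By subadditivity of $t\mapsto\log\|Df^{-t}|_{E(x)}\|$ (Kingman / chain rule), the full limit $\lim_{n\to+\infty}\frac1n\log\|Df^{-n}|_{E(x)}\|$ exists $\mu$-a.e.\ and equals $\tfrac1K$ of the above, hence is $<-a'/K$. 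The same argument with $\psi(x)=\log\|Df^{K}|_{F(x)}\|$ (now iterating forward under $f^{K}$) gives $\lim_{n\to+\infty}\frac1n\log\|Df^{n}|_{F(x)}\|<-b'/K$ $\mu$-a.e.

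Finally I would set $\alpha_0=\tfrac12\min\{a'/K,\;b'/K\}$ (or simply any positive number below both rates), which is a single constant independent of $\mu$, completing the proof. The only mildly delicate point is making the reduction to ergodic measures clean: one must invoke item~\ref{d} of Proposition~\ref{main.pro} to know the ergodic components of a $c$-Gibbs $u$-state are again $c$-Gibbs $u$-states, so that the uniform bounds of Lemma~\ref{daijia} apply to each component, and then note that the a.e.\ pointwise conclusion for each component assembles into the a.e.\ conclusion for $\mu$ itself. I do not expect any real obstacle here—Lemma~\ref{daijia} already did the substantive work of extracting \emph{uniform} constants over the compact set $Gibbs^u_c(f)$; this lemma is essentially a bookkeeping step converting those uniform integral bounds into a uniform Lyapunov-exponent gap via Oseledets/Birkhoff.
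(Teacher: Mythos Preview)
Your overall strategy is sound and actually more direct than the paper's, but there is a slip in the Birkhoff step. When you write
\[
\lim_{m\to\infty}\frac{1}{m}\sum_{j=0}^{m-1}\varphi\bigl(f^{-jK}(x)\bigr)=\int\varphi\,d\mu,
\]
you are implicitly assuming that $\mu$ is ergodic for $f^{-K}$ (equivalently $f^{K}$), which does not follow from $f$-ergodicity. The $f^{K}$-ergodic components of $\mu$ form a finite cycle $\{f^j_*\nu\}_j$, and the integrals $\int\varphi\,d(f^j_*\nu)$ need not all equal $\int\varphi\,d\mu$; so the displayed equality can fail on a set of positive $\mu$-measure.

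The fix is easy and is already implicit in your parenthetical: skip Birkhoff altogether and apply Kingman's subadditive ergodic theorem directly to the cocycle $a_n(x)=\log\|Df^{-n}|_{E(x)}\|$ (subadditive with respect to $f^{-1}$) on the $f$-ergodic system. Then $\lim_n\tfrac{1}{n}a_n(x)$ exists, is constant $\mu$-a.e., and equals
\[
\inf_{n\ge 1}\frac{1}{n}\int a_n\,d\mu\;\le\;\frac{1}{N}\int\log\|Df^{-N}|_{E}\|\,d\mu\;<\;-\frac{a}{N},
\]
by Lemma~\ref{daijia}. The same argument with $b_n(x)=\log\|Df^{n}|_{F(x)}\|$ and the constants $L,b$ handles $F$. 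Taking $\alpha_0<\min\{a/N,b/L\}$ and invoking item~\ref{d} of Proposition~\ref{main.pro} for the non-ergodic case finishes the proof.

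For comparison, the paper takes a considerably longer detour: it starts from ergodic $c$-Gibbs $u$-states of $f^N$, uses Lemma~\ref{xin} to form the $f$-ergodic average $\mu_0=\frac{1}{N}\sum_i f^i_*\mu$, applies Lemma~\ref{daijia} to $\mu_0$, extracts a single good push-forward $f^j_*\mu$, and then runs an explicit chain-rule estimate (with an error term $2CN$ absorbed by choosing a further large iterate $k$) to transfer the bound from $x_j=f^j(x)$ back to $x$; finally it uses Lemma~\ref{subset} to return from $Gibbs^u_c(f^N)$ to $Gibbs^u_c(f)$. All of this machinery is precisely what the $\inf_n$ formula in Kingman's theorem lets you avoid: your route (once repaired) gets the same uniform constant $\alpha_0$ with essentially one line after Lemma~\ref{daijia}.
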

\begin{proof}
We begin by considering the constant $N$ from lemma \ref{daijia}, and any ergodic $c$-Gibbs $u$-state $\mu\in EGibbs_c^u(f^{N})$. By  Lemma \ref{xin},
\[\mu_0=\frac{1}{N}\sum_{p=0}^{N-1}f_*^p\mu =\frac{1}{N}\sum_{i=0}^{-(N-1)}f_*^i\mu \in EGibbs^u_c(f),\]
where $f^{-N}_*\circ f_*^p\mu=f^{p-N}_*\mu=f_*^p\mu$ and $\mu=f^{-N}_*\mu$. Applying Lemma \ref{daijia}, we obtain:
$$ 
\int\log \| Df^{-N}|_{E(x)}\| d \mu_0=\int \log \| Df^{-N}|_{E(x)} \| d (\frac{1}{N}\sum_{i=0}^{-(N-1)}f_*^i\mu)<-a<0. 
$$
Thus, there exists $-(N-1)\le j\le 0$ such that:
\[
\int\log\| Df^{-N}|_{E(x)}\| d f_*^j\mu<-a.
\]
For any $x\in B(\mu,f^{-N})$, we have $f^{j}(x)\in B(f_*^j\mu, f^{-N})$. Since $\log\| Df^{-N}|_{E(x)}\|$ is a continuous function on $M$, by the definition of $weak^*$ convergence, we obtain:
\[
\lim_{n\rightarrow+\infty}\frac{1}{n}\sum_{l=0}^{n-1}\log\| Df^{-N}|_{E((f^{-N})^l(f^{j}(x)))}\|=\int \log \| Df^{-N}|_{E(x)}  \| d f_*^j\mu<-a.
\]
For simplicity, let $g=f^{-N}, x_j=f^{j}(x)$ just for the proof. The above formula can be used to write as:
\begin{equation}\label{budengshi1}
\lim_{n\rightarrow+\infty}\frac{1}{n}\sum_{l=0}^{n-1}\log\| Dg|_{E(g^l(x_j))}\|=\int \log \| Dg|_{E(x)}  \| d f_*^j\mu<-a.
\end{equation}

Let $k$ be any fixed positive integer. For every $l\in\mathbb{N}$, by  the chain rule:
\[
\| Dg^k|_{E(g^{kl}(x_j))}\|\le\prod_{i=0}^{k-1}\| Dg|_{E(g^{kl+i}(x_j))}\|.
\]
Thus:
\begin{equation}\label{budengshi2}
\frac{1}{n}\sum_{l=0}^{n-1}\log\| Dg^k|_{E(g^{kl}(x_j))}\|\le \frac{1}{n}\sum_{i=0}^{nk-1}\log\| Dg|_{E(g^i(x_j))}\|.
\end{equation}
Combining the  inequalities \ref{budengshi1} and \ref{budengshi2}, we have:
\begin{equation}\label{budengshi3}
\limsup_{n\rightarrow+\infty}\frac{1}{kn}\sum_{l=0}^{n-1}\log\| Dg^k|_{E(g^{kl}(x_j))}\|\le-a.
\end{equation}

Let $C=\max\{\log\| Df\|, \log\| Df^{-1}\|\}$.
Using chain rule  again, we obtain:
\begin{align*}
\begin{split}
\log\| Dg^k|_{E(g^{kl}(x_j))}\|&=\log\| Df^j\circ Dg^k\circ Df^{-j}|_{E(g^{kl}(x_j))}\|\\
&\ge \log\| Dg^k|_{E(g^{kl}(x))}\|-2C|j|\\
&\ge \log\| Dg^k|_{E(g^{kl}(x))}\|-2CN.
\end{split}
\end{align*}
Thus, we arrive at:
\begin{equation}\label{budengshi4}
\limsup_{n\rightarrow+\infty}\frac{1}{kn}\sum_{l=0}^{n-1}\log\| Dg^k|_{E(g^{kl}(x))}\|\le \limsup_{n\rightarrow+\infty}\frac{1}{kn}\sum_{l=0}^{n-1}(\log\| Dg^k|_{E(g^{kl}(x_j))}\|+2CN).
\end{equation}
Now, combining this with the previous inequalities~\ref{budengshi3}, we get: 
\[
\limsup_{n\rightarrow+\infty}\frac{1}{n}\sum_{l=0}^{n-1}\log\| Dg^k|_{E(g^{kl}(x))}\|\le \limsup_{n\rightarrow+\infty}\frac{1}{n}\sum_{l=0}^{n-1}\log\| Dg^k|_{E(g^{kl}(x_j))}\|\le(-ka+2CN).
\]
When $k$ is sufficiently large, $-ka+2CN$ becomes negative. By imposing $-ka+2CN<-a$, we derive $k>\frac{2CN}{a}+1$. Choosing 
\begin{equation}\label{kcN}
\frac{2CN}{a}+1<k<\frac{2CN}{a}+10,
\end{equation}
 we have:
\[
\limsup_{n\rightarrow+\infty}\frac{1}{n}\sum_{l=0}^{n-1}\log\| Dg^k|_{E(g^{kl}(x))}\|\le(-ka+2CN)<-a.
\]
Substituting $g=f^{-N}$ back, for $x\in B(\mu,f^{-N})$, we have: 
\[
\limsup_{n\rightarrow+\infty}\frac{1}{n}\sum_{l=0}^{n-1}\log\| Df^{-Nk}|_{E(f^{-Nkl}(x))}\|\le(-ka+2CN)<-a.
\]
Let $N_0=Nk$.  Therefore, for $\mu$-almost every $x\in M$(noting that $\mu(B(\mu,f^{-N}))=1$):
\[
\limsup_{n\rightarrow+\infty}\frac{1}{n}\sum_{i=0}^{n-1}\log\| Df^{-N_0}|_{E(f^{-iN_0}(x))}\|<-a.
\]
By the subadditive ergodic  theorem of Kingman, for $\mu$-almost every $x\in M$, the limit $$\lim_{n\rightarrow+\infty}\frac{1}{n}\log\| Df^{-n}|_{E(x)}\|$$ exists.
Thus, for $\mu$-almost every $x\in M$:
\begin{align*}
\begin{split}
	\lim_{n\rightarrow+\infty}\frac{1}{n}\log\| Df^{-n}|_{E(x)}\|&=\lim_{n\rightarrow+\infty}\frac{1}{nN_0}\log\| Df^{-nN_0}|_{E(x)}\|\\
&\le\limsup_{n\rightarrow+\infty}\frac{1}{nN_0}\sum_{i=0}^{n-1}\log\| Df^{-N_0}|_{E(f^{-iN_0}(x))}\|\\
&<-\frac{a}{N_0}\\
&<-\frac{a}{N(\frac{2CN}{a}+10)}(recall ~\ref{kcN}).
\end{split}
\end{align*}
It is evident that the value $-\frac{a}{N(\frac{2CN}{a}+10)}$ is independent of the choice of the ergodic $c$-Gibbs $u$-state $\mu$ of $f^N$.
Consequently, by setting $\alpha_0=\frac{a}{N(\frac{2CN}{a}+10)}$,  the measurable set defined by
$$
E^{-1}:=\{x:\limsup_{n\rightarrow+\infty}\frac{1}{n}\log\| Df^{-n}|_{E(x)}\|<-\alpha_0\}
$$
has full measure for any ergodic $c$-Gibbs $u$-state of $f^N$. 
By item~\ref{d} of Proposition~\ref{main.pro}, for any $c$-Gibbs $u$-state $\nu$ of $f^N$, $\nu(E^{-1})=1$. This implies that the smallest Lyapunov exponent on $E$ is not smaller than $\alpha$ for any $c$-Gibbs $u$-state $\nu$ of $f^N$.  Furthermore, by lemma \ref{subset}, $Gibbs_c^u(f)\subset Gibbs_c^u(f^N)$. Therefore, the set $E^{-1}$ has full measure for any  $c$-Gibbs $u$-state of $f$.  Combined with Kingman's subadditive ergodic theorem, we obtain the first inequality.

By the second part of Lemma~\ref{daijia},
a similar argument can be used to prove the corresponding inequality for the subbundle $F$.  By choosing $\alpha_0$ smaller if necessary,  we can ensure that the second inequality stated in this lemma also holds for almost every point.
\end{proof}

\begin{lem}\label{linyu} There exists a constant $\alpha>0$ such that for any $\varepsilon>0$, there exist a natural number $l\in\mathbb{N}$ and a $C^1$ neighborhood $\tilde{\mathcal{U}}$ of $f$ such that every $g\in\tilde{\mathcal{U}}$ factoring over the same Anosov as $f$, every measure $\mu\in Gibb^u_c(g)$ satisfies
\[
\mu(\Lambda_g(\alpha,l))>1-\varepsilon.
\]
\end{lem}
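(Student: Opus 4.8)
The plan is to deduce Lemma~\ref{linyu} from the machinery already assembled: Lemma~\ref{alpha0}, which supplies a uniform spectral gap $\alpha_0$ for all $c$-Gibbs $u$-states of $f$, and Lemma~\ref{yizhixing}, which upgrades such a gap into uniform Pesin-block estimates stable under $C^1$-perturbation. The only subtlety is that Lemma~\ref{yizhixing} is stated for a single invariant measure $\mu$ together with a $C^1$-neighborhood of $f$ and a weak$^*$-neighborhood of $\mu$, whereas here we need a single $l$ and a single neighborhood $\tilde{\mathcal U}$ working simultaneously for \emph{all} of $\mathrm{Gibbs}^u_c(g)$ for all $g$ factoring over the same Anosov as $f$. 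So the work is a compactness argument that patches together the local statements, using item~\ref{a} of Proposition~\ref{main.pro} to control where the relevant measures can accumulate.

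First I would fix $\alpha_0>0$ as in Lemma~\ref{alpha0}, choose any $0<\alpha<\alpha_0$, and fix $\varepsilon>0$. For each $\mu\in\mathrm{Gibbs}^u_c(f)$: by Lemma~\ref{alpha0}, $\mu$-almost every $x$ satisfies $\lim_n\frac1n\log\|Df^{-n}|_{E(x)}\|<-\alpha_0$ and $\lim_n\frac1n\log\|Df^{n}|_{F(x)}\|<-\alpha_0$, which are exactly the hypotheses of Lemma~\ref{yizhixing} (with the $\alpha_0$ there taken to be our $\alpha_0$, since $\alpha<\alpha_0$). Lemma~\ref{yizhixing} then yields $l_\mu\in\mathbb N$, a $C^1$-neighborhood $\mathcal U_\mu$ of $f$, and a weak$^*$-neighborhood $\mathcal V_\mu$ of $\mu$ such that $\nu(\Lambda_g(\alpha,l_\mu))>1-\varepsilon$ for every $g\in\mathcal U_\mu$ and every $g$-invariant $\nu\in\mathcal V_\mu$. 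The family $\{\mathcal V_\mu:\mu\in\mathrm{Gibbs}^u_c(f)\}$ covers the set $\mathrm{Gibbs}^u_c(f)$, which is compact by item~\ref{c} of Proposition~\ref{main.pro}; extract a finite subcover $\mathcal V_{\mu_1},\dots,\mathcal V_{\mu_r}$, and set $\mathcal V=\bigcup_i\mathcal V_{\mu_i}$, $\mathcal U=\bigcap_i\mathcal U_{\mu_i}$, and $l=\mathrm{lcm}(l_{\mu_1},\dots,l_{\mu_r})$. Note $\Lambda_g(\alpha,l_{\mu_i})\subset\Lambda_g(\alpha,l)$ whenever $l_{\mu_i}\mid l$ — this is the standard monotonicity of Pesin blocks under passing to a multiple of the return time, and it is what lets a single $l$ work — so $\nu(\Lambda_g(\alpha,l))>1-\varepsilon$ for every $g\in\mathcal U$ and every $g$-invariant $\nu\in\mathcal V$.

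Next I need a $C^1$-neighborhood $\tilde{\mathcal U}\subset\mathcal U$ of $f$ so that every $g\in\tilde{\mathcal U}$ factoring over the same Anosov as $f$ has $\mathrm{Gibbs}^u_c(g)\subset\mathcal V$. This is the place where item~\ref{a} of Proposition~\ref{main.pro} enters. Suppose no such neighborhood exists: then there are $g_n\to f$ in $C^1$, each factoring over the same Anosov, and $\mu_n\in\mathrm{Gibbs}^u_c(g_n)$ with $\mu_n\notin\mathcal V$. By compactness of the space of probability measures on $M$, pass to a subsequence with $\mu_n\to\mu_\infty$ in the weak$^*$ topology; since $M\setminus\mathcal V$ is closed, $\mu_\infty\notin\mathcal V$. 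But by item~\ref{a} of Proposition~\ref{main.pro}, $\mu_\infty\in\limsup_n\mathrm{Gibbs}^u_c(g_n)\subset\mathrm{Gibbs}^u_c(f)$, and $\mathrm{Gibbs}^u_c(f)\subset\mathcal V$ by construction of the finite subcover — a contradiction. Hence such a $\tilde{\mathcal U}$ exists. (Here I should double-check that item~\ref{a} is genuinely available without assuming in advance that the $g_n$ have a $c$-mixed center; it is stated in Proposition~\ref{main.pro} for maps factoring over the same Anosov with no mixed-center hypothesis, so this is fine.)

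Finally, for $g\in\tilde{\mathcal U}$ factoring over the same Anosov as $f$ and $\mu\in\mathrm{Gibbs}^u_c(g)$: we have $g\in\mathcal U$ and $\mu\in\mathcal V$, and $\mu$ is $g$-invariant, so $\mu(\Lambda_g(\alpha,l))>1-\varepsilon$, which is the claim. The constant $\alpha$ depends only on $f$ (via $\alpha_0$), while $l$ and $\tilde{\mathcal U}$ depend on $\varepsilon$ as required. The main obstacle, and the only genuinely non-formal point, is the passage from the per-measure statement of Lemma~\ref{yizhixing} to a uniform one: this requires both the compactness of $\mathrm{Gibbs}^u_c(f)$ and, crucially, the semicontinuity in item~\ref{a} of Proposition~\ref{main.pro} to guarantee that the $c$-Gibbs $u$-states of nearby maps do not escape the region where the finitely many local estimates are valid; without that one could not rule out a sequence $g_n\to f$ whose $c$-Gibbs $u$-states drift away from $\mathrm{Gibbs}^u_c(f)$. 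Everything else is bookkeeping with least common multiples of the times $l_{\mu_i}$ and intersections of finitely many neighborhoods.
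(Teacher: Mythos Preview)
Your proposal is correct and follows essentially the same route as the paper's proof: apply Lemma~\ref{yizhixing} at each $\mu\in\mathrm{Gibbs}^u_c(f)$, use compactness (item~\ref{c} of Proposition~\ref{main.pro}) to extract a finite subcover, take a common multiple of the times $l_{\mu_i}$ so that $\Lambda_g(\alpha,l_{\mu_i})\subset\Lambda_g(\alpha,l)$, and then invoke item~\ref{a} of Proposition~\ref{main.pro} to shrink the $C^1$-neighborhood so that $\mathrm{Gibbs}^u_c(g)$ stays inside the finite union $\bigcup_i\mathcal V_{\mu_i}$. The only cosmetic differences are that the paper uses the product $l=l_{\mu_1}\cdots l_{\mu_m}$ rather than the lcm, and simply asserts the shrinking step by citing item~\ref{a} rather than writing out your contradiction argument.
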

\begin{proof}
By combining Lemma~\ref{alpha0} and Lemma \ref{yizhixing}, and fixing $\varepsilon>0$ as in Lemma~\ref{yizhixing}, we conclude that for  any $\mu\in Gibb^u_c(f)$, there exist $l_{\mu}$, a neighborhood $\mathcal{V}_{\mu}$ of $\mu$, and a $C^1$ neighborhood $\mathcal{U}_{\mu}$ of $f$ such that for every $g\in\mathcal{U}_{\mu}$ and $g$-invariant measure $\nu\in\mathcal{V}_{\mu}$, we have
\[
\nu(\Lambda_{g}(\alpha, l_{\mu}))>1-\varepsilon.
\]
By the compactness of $Gibbs^u_c(f)$ (as noted in  item~\ref{c} of Proposition~ \ref{main.pro}), there exist finitely many $c$-Gibbs $u$-states $\mu_1,...,\mu_m$ such that $Gibb^u_c(f)\subset\bigcup_{1\le i\le m}\mathcal{V}_{\mu_i}$. Let $\tilde{\mathcal{U}}=\bigcap_{1\le i\le m}\mathcal{U}_{\mu_i}$.

 By shrinking $\tilde{\mathcal{U}}$ if necessary, we can ensure that $Gibb^u_c(g)\subset\bigcup_{1\le i\le m}\mathcal{V}_{\mu_i}$  for any $g\in\tilde{\mathcal{U}}$ that factors over the same Anosov  as $f$ (see item~\ref{a} of Proposition \ref{main.pro}). Define $l=l_{\mu_1}\cdot...\cdot l_{\mu_m}$. By the chain rule, we obtain: $$\Lambda_{g}(\alpha, l_{\mu_i})\subset \Lambda_g(\alpha,l), \mbox{ for each $i\in\{1,2,...,m\}$}.$$
As a result, for  any $\nu\in Gibbs^u_c(g)$, there exists $\mathcal{V}_{\mu_i}$ such that $\nu\in \mathcal{V}_{\mu_i}$.  Consequently, $$\nu(\Lambda_g(\alpha,l))\ge\nu(\Lambda_{g}(\alpha, l_{\mu_i}))>1-\varepsilon.$$
\end{proof}

\begin{lem}\label{openness}
Let $g\in\tilde{\U}$ be a diffeomorphism that factors over the same Anosov as $f$, where $\tilde{\U}$ is as described in Lemma~\ref{linyu}. Then $g$ has a $c$-mostly expanding  and $c$-mostly contracting center.
\end{lem}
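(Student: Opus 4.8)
\medskip
\noindent\emph{Sketch of proof.} Recall that ``$g$ has a $c$-mostly expanding and $c$-mostly contracting center'' means that every $\mu\in Gibbs^u_c(g)$ has only positive Lyapunov exponents along $E^{cu}_g$ and only negative Lyapunov exponents along $E^{cs}_g$. By item~\ref{d} of Proposition~\ref{main.pro}, almost every ergodic component of any $\mu\in Gibbs^u_c(g)$ is itself an ergodic $c$-Gibbs $u$-state of $g$, and the Oseledets data at $\mu$-almost every point coincide with those of the relevant ergodic component; so it will suffice to prove the assertion for ergodic $\mu\in Gibbs^u_c(g)$. The plan is to read off a \emph{one-step} definite contraction from membership in the Pesin block $\Lambda_g(\alpha,l)$ and then to average it.

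The crucial elementary remark is that, for $x\in\Lambda_g(\alpha,l)$, the $n=1$ instance of the two defining inequalities of the Pesin block yields $\|Dg^{-l}|_{E_g(x)}\|\le e^{-\alpha l}$ and $\|Dg^{l}|_{F_g(x)}\|\le e^{-\alpha l}$. I would then set $\phi(x)=\log\|Dg^{-l}|_{E_g(x)}\|$ and $\psi(x)=\log\|Dg^{l}|_{F_g(x)}\|$, so that $\phi\le-\alpha l$ and $\psi\le-\alpha l$ on $\Lambda_g(\alpha,l)$, while trivially $\phi,\psi\le l\kappa$ everywhere, with $\kappa:=\sup_{h\in\mathcal{U}_0}\max\{\log\|Dh\|,\ \log\|Dh^{-1}\|\}$ for some fixed bounded $C^1$-neighborhood $\mathcal{U}_0$ of $f$. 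Hence, whenever $\mu(\Lambda_g(\alpha,l))>1-\varepsilon$, one gets $\int\phi\,d\mu\le l\big(\varepsilon(\alpha+\kappa)-\alpha\big)$, and the same bound for $\psi$. I would take $\alpha$ to be the constant furnished by Lemma~\ref{linyu}, fix $\varepsilon:=\frac{\alpha}{2(\alpha+\kappa)}$ \emph{before} invoking Lemma~\ref{linyu} with this $\varepsilon$, and let $l\in\mathbb{N}$ and $\tilde{\mathcal{U}}$ be the objects it produces (shrinking $\tilde{\mathcal{U}}$ so that $\tilde{\mathcal{U}}\subset\mathcal{U}_0$). Then, for any $g\in\tilde{\mathcal{U}}$ factoring over the same Anosov as $f$ and any $\mu\in Gibbs^u_c(g)$, Lemma~\ref{linyu} gives $\mu(\Lambda_g(\alpha,l))>1-\varepsilon$, hence $\int\phi\,d\mu<-\frac{\alpha l}{2}<0$ and $\int\psi\,d\mu<-\frac{\alpha l}{2}<0$.

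Next, fixing such a $g$ and an ergodic $\mu\in Gibbs^u_c(g)$, I would use submultiplicativity of the derivative cocycle along the invariant bundles: $\log\|Dg^{-nl}|_{E_g(x)}\|\le\sum_{i=0}^{n-1}\phi(g^{-il}x)$ and $\log\|Dg^{nl}|_{F_g(x)}\|\le\sum_{i=0}^{n-1}\psi(g^{il}x)$ for every $n$. Dividing by $nl$ and letting $n\to\infty$: on one hand, by the Oseledets (or Kingman) theorem the left-hand sides converge $\mu$-a.e., the first to $\lambda^-(x):=\lim_m\frac1m\log\|Dg^{-m}|_{E_g(x)}\|$ (which equals minus the smallest Lyapunov exponent of $\mu$ along $E_g$), the second to $\lambda^+(x):=\lim_m\frac1m\log\|Dg^{m}|_{F_g(x)}\|$ (which equals the largest Lyapunov exponent of $\mu$ along $F_g$); on the other hand, by Birkhoff's ergodic theorem for the $\mu$-preserving maps $g^{-l}$ and $g^{l}$, the right-hand sides converge $\mu$-a.e. to functions whose $\mu$-integrals are $\frac1l\int\phi\,d\mu<0$ and $\frac1l\int\psi\,d\mu<0$. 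Therefore $\int\lambda^-\,d\mu<0$ and $\int\lambda^+\,d\mu<0$, and since $\lambda^\pm$ are constant along $g$-orbits, the $g$-ergodicity of $\mu$ forces $\lambda^-<0$ and $\lambda^+<0$ $\mu$-a.e. Thus the smallest Lyapunov exponent of $\mu$ along $E_g$ is strictly positive; since $E^{cu}_g\subset E_g$ is $Dg$-invariant, every Lyapunov exponent of $\mu$ along $E^{cu}_g$ is then strictly positive, and likewise every Lyapunov exponent of $\mu$ along $E^{cs}_g=F_g$ is strictly negative. By the reduction of the first paragraph this propagates to all $\mu\in Gibbs^u_c(g)$, which is the desired conclusion.

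I expect the substance of the argument to be short; the points requiring genuine care — and in that sense the ``main obstacle'' — are bookkeeping: (i) choosing $\varepsilon$ small enough and, crucially, \emph{before} $l$ is produced by Lemma~\ref{linyu}, which is what makes the apparent circularity ``$\varepsilon$ depends on $l$, which depends on $\varepsilon$'' harmless, using the uniform derivative bound $\kappa$ on a once-and-for-all fixed neighborhood $\mathcal{U}_0$; (ii) the fact that a $g$-ergodic measure need not be $g^{l}$-ergodic, which I handle by noting that the limit functions $\lambda^\pm$ are $g$-invariant, so only $g$-ergodicity is used; and (iii) reading ``only positive Lyapunov exponents along $E^{cu}_g$'' correctly — it is a direct consequence of positivity of the smallest exponent along the bigger bundle $E_g=E^{uu}_g\oplus E^{cu}_g$ together with the inclusion $E^{cu}_g\subset E_g$, so no separate analysis of the center-unstable bundle is needed.
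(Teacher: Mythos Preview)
Your argument is correct and takes a genuinely different route from the paper's own proof. The paper proceeds \emph{pointwise}: it asserts that $\mu\big(\bigcup_{k\in\mathbb{N}}\Lambda_g(\alpha,k)\big)=1$ for every $c$-Gibbs $u$-state $\mu$ of $g$, and then, for $\mu$-a.e.\ $x$, picks some $k$ with $x\in\Lambda_g(\alpha,k)$ and reads off $\lim_n\frac{1}{n}\log\|Dg^{-n}|_{E_g(x)}\|\le -\alpha$ directly from the product inequality defining the Pesin block (and likewise for $F_g$). No averaging is involved. By contrast, you fix a \emph{single} $l$, exploit only the $n=1$ bound $\|Dg^{-l}|_{E_g}\|\le e^{-\alpha l}$ on $\Lambda_g(\alpha,l)$, integrate the continuous function $\phi=\log\|Dg^{-l}|_{E_g}\|$ against $\mu$ using a carefully pre-chosen $\varepsilon=\alpha/(2(\alpha+\kappa))$, and then pass to Lyapunov exponents via Birkhoff/Kingman and $g$-ergodicity. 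What your approach buys is that the choice of $\tilde{\mathcal U}$ is completely explicit (it is the neighborhood Lemma~\ref{linyu} produces for this particular $\varepsilon$), so there is no issue with needing $g$ to lie in neighborhoods $\tilde{\mathcal U}(\varepsilon)$ for \emph{all} small $\varepsilon$ in order to get the union of Pesin blocks to have full measure; the paper's formulation leaves this point implicit. What the paper's route buys is a sharper conclusion with no extra work: it gives the uniform bound $-\alpha$ on the exponents rather than $-\alpha/2$, and it bypasses the bookkeeping you identify in (i)--(ii). Your handling of the potential non-$g^l$-ergodicity of $\mu$ (pass to the $g$-invariant limit $\lambda^\pm$, then use $g$-ergodicity) is clean and correct.
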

\begin{proof}
From the definition of  Pesin blocks, for any $\gamma>0$,$k\in\mathbb{N}$ and any $x\in\Lambda_g(\gamma,k)$, we have:
	\[
	\limsup_{n\rightarrow+\infty}\frac{1}{n}\sum_{i=0}^{n-1}\log\| Dg^{-k}|_{E(g^{-ik}(x))}\|\le-\gamma k.\]
By Lemma~\ref{linyu}, there exists $\alpha>0$ such that for any  $c$-Gibbs $u$-state $\mu$ of $g$, we have  $\mu(\cup_{k\in\mathbb{N}}\Lambda_g(\alpha,k))=1$.  According to Kingman's subadditive ergodic theorem, for $\mu$-almost every $x$, the limit
$$
\lim_{n\rightarrow+\infty}\frac{1}{n}\log\| Dg^{-n}|_{E(x)}\|
$$
exists. Then, for $\mu$-almost every $x\in \cup_{k\in\mathbb{N}}\Lambda_g(\alpha,k)$, there exists some $k\in\mathbb{N}$ such that $x\in\Lambda_g(\alpha,k)$ and
\begin{align*}
\begin{split}
	\lim_{n\rightarrow+\infty}\frac{1}{n}\log\| Dg^{-n}|_{E(x)}\|&=\lim_{n\rightarrow+\infty}\frac{1}{nk}\log\| Dg^{-nk}|_{E(x)}\|\\
&\le\limsup_{n\rightarrow+\infty}\frac{1}{nk}\sum_{i=0}^{n-1}\log\| Dg^{-k}|_{E(g^{-ik}(x))}\|\\
&\le-\frac{\alpha k}{k}\\
&\le-\alpha.
\end{split}
\end{align*}
Consequently, the measurable set
	\[\{x:\limsup_{n\rightarrow+\infty}\frac{1}{n}\log\| Dg^{-n}|_{E(x)}\|\le-\alpha; \limsup_{n\rightarrow+\infty}\frac{1}{n}\log\| Dg^{n}|_{F(x)}\|\le-\alpha\}\]
	 has  full measure  for any  $c$-Gibbs $u$-state.
This implies that  the largest  Lyapunov exponent on $F$ is not bigger than $-\alpha$, and  the smallest Lyapunov exponent on $E$ is not smaller than $\alpha$. 
\end{proof}

We mainly use the following lemma to illustrate that for any $c$-$cu$-state,  local Pesin unstable manifold of  almost every point is contained in the support of the  $c$-$cu$-state.

\begin{thm}\label{uniform}
	 Let $\mu$ be an ergodic $c$-Gibbs $u$-state of $f$.  Then,  $\mu\in EG^{cu}(f)$ if and only if there is a full-measure set $\Gamma(\mu)^u\subset\cup_{k\in\mathbb{N}}\Lambda_f(\alpha,k)$, where $\alpha$ as in Lemma~\ref{linyu}, such that for any $x\in\Gamma(\mu)^u$
	\[W^u_{\loc}(x)\subset\supp(\mu).\]
	where $W^u_{\loc}(x)$ is the local unstable manifold of $x$, with dimension $dim(E^{uu}\oplus E^{cu})$, as in Lemma~\ref{kakaka}.
\end{thm}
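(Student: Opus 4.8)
\medskip
\noindent\textbf{Proof plan.} The reverse implication is essentially a matter of unwinding the definitions, so I would dispatch it first. Given a full--measure set $\Gamma(\mu)^u\subset\bigcup_k\Lambda_f(\alpha,k)$ with $W^u_\loc(x)\subset\supp(\mu)$ for every $x\in\Gamma(\mu)^u$, I would set $\Gamma(\mu):=\Gamma(\mu)^u$ and let $\delta(x)$ be half the radius $\delta(\alpha,k(x))$ of the disk $W^u_\loc(x)$ supplied by Lemma~\ref{kakaka}, where $k(x):=\min\{k:x\in\Lambda_f(\alpha,k)\}$ is a measurable function since each $\Lambda_f(\alpha,k)$ is closed. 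Then $W^u_{\delta(x)}(x)\subset W^u_\loc(x)\subset\supp(\mu)$ on the positive--measure set $\Gamma(\mu)$, and as $\mu$ is by hypothesis an ergodic $c$-Gibbs $u$-state, this is precisely the definition of a $c$-$cu$-state, i.e.\ $\mu\in EG^{cu}(f)$.

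The forward implication carries the content. Starting from $\mu\in EG^{cu}(f)$ with its set $\Gamma(\mu)$ and function $\delta(\cdot)$, I would first invoke Lemma~\ref{linyu} (with $g=f$) to see that $\bigcup_l\Lambda_f(\alpha,l)$ has full $\mu$-measure, intersect $\Gamma(\mu)$ with it, and --- using $\mu(\Gamma(\mu))>0$ --- extract $l_0\in\mathbb{N}$ and $\delta_0\in(0,\delta(\alpha,l_0)]$ so that
\[
\Gamma_0:=\{x\in\Gamma(\mu)\cap\Lambda_f(\alpha,l_0):\ \delta(x)\ge\delta_0\}
\]
has positive $\mu$-measure; for $x\in\Gamma_0$ one then has $W^u_{\delta_0}(x)\subset\supp(\mu)$, where $W^u_{\delta_0}(x)$ is the intrinsic $\delta_0$-ball in the Lemma~\ref{kakaka} local unstable manifold associated with the block $\Lambda_f(\alpha,l_0)$. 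Since ergodicity of $\mu$ for $f$ is the same as ergodicity for $f^{-1}$, for $\mu$-a.e.\ $x$ the backward orbit $\{f^{-n}(x)\}_{n\ge0}$ visits $\Gamma_0$ infinitely often; I may also assume $x\in\Lambda_f(\alpha,m)$ for some $m$. Now the disk $W^u_\loc(x)$ has intrinsic radius $\delta(\alpha,m)$, and by the contraction estimate of Lemma~\ref{kakaka} together with the uniform local equivalence of intrinsic and ambient metrics on disks tangent to $E$ (and the fact, also from that lemma, that the iterates $f^{-n}(W^u_\loc(x))$ stay tangent to $E$), the intrinsic diameter of $f^{-n}(W^u_\loc(x))$ is bounded by a constant times $\tau(\alpha,m)^n$ and hence tends to $0$. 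Choosing a backward return time $n$ with $f^{-n}(x)\in\Gamma_0$ and $n$ so large that this diameter is below $\delta_0$, the set $f^{-n}(W^u_\loc(x))$ is a connected neighborhood of $f^{-n}(x)$ inside the leaf $W^u(f^{-n}(x),f)$ of intrinsic diameter $<\delta_0\le\delta(\alpha,l_0)$, so it is contained in $W^u_{\delta_0}(f^{-n}(x))\subset\supp(\mu)$; applying $f^n$ and using $f^n(\supp(\mu))=\supp(\mu)$ (as $\mu$ is $f$-invariant) yields $W^u_\loc(x)\subset\supp(\mu)$. The $\mu$-a.e.\ set of such $x$ is contained in $\Gamma(\mu)^u:=\{x\in\bigcup_k\Lambda_f(\alpha,k):W^u_\loc(x)\subset\supp(\mu)\}$, which is therefore of full $\mu$-measure, as required.

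The step I expect to be the main obstacle is the geometric bookkeeping in the last paragraph: transferring the inclusion $W^u_{\delta_0}(\cdot)\subset\supp(\mu)$ backward along the dynamics requires knowing that a sufficiently small connected piece of the \emph{global} unstable leaf through a point of the fixed Pesin block $\Lambda_f(\alpha,l_0)$ is genuinely contained in the radius-$\delta_0$ \emph{local} unstable plaque there, with a smallness threshold uniform over the block. This is exactly the coherence of the Pesin plaque family underlying Lemma~\ref{kakaka}; the surrounding argument (the contraction estimate and the use of ergodicity to secure an arbitrarily large backward return time) exists precisely to reduce a general point $x$, sitting in an arbitrary block $\Lambda_f(\alpha,m)$, to this one local situation. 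I would also note that the $c$-Gibbs $u$-state structure by itself --- which yields $\mathcal{F}^{uu}$-saturation of $\supp(\mu)$ --- does not suffice here, since it only propagates the inclusion along the strong unstable direction, whereas $W^u_\loc(x)$ carries the extra center-unstable dimension; it is the non-uniform expansion along $E^{cu}$, quantified on Pesin blocks, that supplies the missing directions.
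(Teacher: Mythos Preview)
Your proposal is correct and follows essentially the same route as the paper: for the forward direction you extract a positive-measure set with a uniform lower bound on $\delta(\cdot)$, use ergodicity to get infinitely many backward returns, shrink $f^{-n}(W^u_\loc(x))$ below that bound via Lemma~\ref{kakaka}, and then invoke invariance of the support---the paper isolates your ``main obstacle'' (that a small-diameter piece of the global unstable leaf through a block point sits inside the local plaque) as Lemma~\ref{suojinqu}. The only cosmetic difference is in the reverse direction, where the paper restricts to a single block $\Lambda_f(\alpha,l)$ of positive measure and takes a constant $\delta\equiv\delta(\alpha,l)/2$, rather than your measurable $\delta(x)=\tfrac12\delta(\alpha,k(x))$; both verify the definition of a $c$-$cu$-state.
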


Before we prove this theorem, we first observe the following fact.

\begin{lem}\label{suojinqu}
For any ergodic $c$-Gibbs $u$-state $\mu$,  there is a full-measure set $\Delta(\mu)\subset\cup_{k\in\mathbb{N}}\Lambda_f(\alpha,k)$, where $\alpha$ is as in Lemma~\ref{linyu}, such that for any $x\in \Delta(\mu)$, the following conditions hold:
\begin{itemize}
\item $W^u_\loc(x)\subset W^u(x,f);$
\item for any disk $D^u(x)\subset  W^u(x,f)$ containing $x$ and any $\tilde{\delta}>0$ satisfying $W^u_{\tilde{\delta}}(x)\subset W^u_{\loc}(x)$, if the diameter of $D^u(x)$  is less than $\frac{1}{2}\tilde{\delta}$ with respect to the metric on $W^u(x,f)$, then
 $$D^u(x)\subset W^u_{\tilde{\delta}}(x).$$
\end{itemize}
\end{lem}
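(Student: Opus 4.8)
The plan is to construct $\Delta(\mu)$ as a finite intersection of full‑measure sets, then to dispatch the first bullet essentially for free and reduce the second bullet to one structural fact about how the local plaque $W^u_\loc(x)$ of Lemma~\ref{kakaka} sits inside the global unstable set $W^u(x,f)$ of Proposition~\ref{stablemanifold}.

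First I would set $\Delta(\mu)=\bigl(\bigcup_{k\in\mathbb N}\Lambda_f(\alpha,k)\bigr)\cap\Delta_1$, where $\alpha$ is the constant of Lemma~\ref{linyu}: the first factor has full $\mu$‑measure by applying Lemma~\ref{linyu} to $g=f$ with $\varepsilon=1/j$ and letting $j\to\infty$, and $\Delta_1$ is the full‑measure set furnished by Proposition~\ref{stablemanifold}, along which $W^u(x,f)$ is an injectively immersed $C^1$‑manifold of dimension $\dim E$ (Proposition~\ref{stablemanifold} applies since, by Lemma~\ref{alpha0}, every ergodic $c$‑Gibbs $u$‑state satisfies the hypotheses of Lemma~\ref{yizhixing}); it may also be convenient to intersect with a full‑measure set $\Delta_2$ of points whose negative semiorbit returns infinitely often to $\bigcup_k\Lambda_f(\alpha,k)$ (Poincar\'e recurrence for $f^{-1}$), so that $W^u(x,f)$ can literally be written as an increasing exhaustion by iterated local plaques. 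The first bullet is then immediate: for $x\in\Delta(\mu)$ with $x\in\Lambda_f(\alpha,k)$ and any $y\in W^u_\loc(x)$, the estimate in Lemma~\ref{kakaka} gives $d(f^{-n}(x),f^{-n}(y))\le C\tau^n d(x,y)\to 0$, so $y\in W^u(x,f)$.

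The heart of the matter — and the step I expect to be the main obstacle — is the claim that $\Int W^u_\loc(x)$ is \emph{relatively open} in $W^u(x,f)$ for its intrinsic $C^1$‑manifold topology, and that the Riemannian metric of $W^u(x,f)$ restricts on $\Int W^u_\loc(x)$ to the metric of $W^u_\loc(x)$. The idea is to first note that the plaque $W^u_\loc(x)$ of Lemma~\ref{kakaka}, being tangent to $E$ and uniformly backward contracted, coincides near $x$ with the genuine local unstable manifold (uniqueness of local unstable manifolds tangent to a dominated bundle; cf.\ \cite{CM,ABC}), and then to invoke the construction of $W^u(x,f)$ in \cite{ABC}, which realizes it as a nested exhaustion $W^u(x,f)=\bigcup_i f^{n_i}\bigl(W^u_\loc(f^{-n_i}x)\bigr)$ by iterated local plaques carrying the direct‑limit $C^1$‑structure and the ambient metric of $M$; in that structure each $\Int f^{n_i}(W^u_\loc(f^{-n_i}x))$, in particular $\Int W^u_\loc(x)$, is open. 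The delicate part is precisely reconciling the plaque‑family plaque of Lemma~\ref{kakaka} with the unstable set of Proposition~\ref{stablemanifold}; once that identification is made everything downstream is soft.

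Granting this, the second bullet is a short connectedness argument that I would run as follows. Fix $x\in\Delta(\mu)\cap\Lambda_f(\alpha,k)$ with local unstable radius $\delta_0=\delta(\alpha,k)$, and take $D^u(x)$ and $\tilde\delta<\delta_0$ as in the statement, so that the closed $\tfrac12\tilde\delta$‑ball of $W^u_\loc(x)$ about $x$ is compact and contained in $\Int W^u_\loc(x)$. For $y\in D^u(x)$ one has $d_{W^u(x,f)}(x,y)<\tfrac12\tilde\delta$, so there is an arclength‑parametrized path $\gamma\colon[0,L^*]\to W^u(x,f)$ from $x$ to $y$ with $L^*<\tfrac12\tilde\delta$. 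Put $T=\sup\{t:\gamma([0,t])\subset\Int W^u_\loc(x)\}>0$. For $t<T$ the subpath $\gamma|_{[0,t]}$ lies in $W^u_\loc(x)$ with length $\le L^*$, hence $d_{W^u_\loc(x)}(x,\gamma(t))<\tfrac12\tilde\delta$, i.e.\ $\gamma(t)\in W^u_{\tilde\delta/2}(x)$; letting $t\to T^-$ gives $\gamma(T)\in\overline{W^u_{\tilde\delta/2}(x)}\subset\Int W^u_\loc(x)$, and since $\Int W^u_\loc(x)$ is open in $W^u(x,f)$ this forces $T=L^*$ (otherwise $\gamma$ would stay in $\Int W^u_\loc(x)$ slightly past $T$, contradicting the definition of $T$). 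Therefore $d_{W^u_\loc(x)}(x,y)\le L^*<\tilde\delta$, that is, $y\in W^u_{\tilde\delta}(x)$; as $y\in D^u(x)$ was arbitrary, $D^u(x)\subset W^u_{\tilde\delta}(x)$, which is the claim.
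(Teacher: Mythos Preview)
Your proposal is correct and follows essentially the same approach as the paper: construct $\Delta(\mu)$ from Lemma~\ref{linyu} together with the full-measure set of Proposition~\ref{stablemanifold}, dispatch the first bullet via the backward contraction estimate in Lemma~\ref{kakaka}, and for the second bullet use that the intrinsic metrics of $W^u_\loc(x)$ and $W^u(x,f)$ coincide on the plaque. The paper's own proof is much terser---it simply asserts that ``the metric on $W^u_{\tilde\delta}(x)$ is consistent with the metric on $W^u(x,f)$'' and that the inclusion ``directly follows''---so your explicit identification of the openness of $\Int W^u_\loc(x)$ in $W^u(x,f)$ and your connectedness/path argument actually supply the justification the paper leaves implicit.
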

\begin{proof}
By Lemma~\ref{linyu},  we know that $\mu(\cup_{k\in\mathbb{N}}\Lambda_f(\alpha,k))=1$. The first item follows from Lemma~\ref{kakaka}.

For the second item, by the  Proposition~\ref{stablemanifold}, $W^u(x,f)$ is an injectively immersed $C^1$-manifold of dimension $\dim(E)$. Since $D^u(x)\subset W^u(x,f)$ and $W^u_{\tilde{\delta}}(x)\subset W^u_{\loc}(x)\subset W^u(x,f)$, the metric on $W^u_{\tilde{\delta}}(x)$ is consistent with the metric on $W^u(x,f)$. Since  the diameter of $D^u(x)$  is less than $\frac{1}{2}\tilde{\delta}$ with respect to the metric on $W^u(x,f)$,  it directly follows that $D^u(x)\subset W^u_{\tilde{\delta}}(x)$.
\end{proof}

Now, we can prove Theorem~\ref{uniform}.

\begin{proof}[Proof of Theorem~\ref{uniform}]
	We begin by proving necessity.  Assume $\mu$ is a $c$-$cu$-state.
 By the definition of a $c$-$cu$-state, we have:
	\[\Gamma(\mu)=\cup_{n\in\mathbb{N}}\{x\in\Gamma(\mu),\delta(x)\ge\frac{1}{n}, W^u_{\delta(x)}(x)\subset\supp(\mu)\},\]
where $\Gamma(\mu)$ and $\delta(x)$ correspond to the definition associated with  $\mu$.
	Each set $$\{x\in\Gamma(\mu),\delta(x)\ge\frac{1}{n}, W^u_{\delta(x)}(x)\subset\supp(\mu)\}$$ is  measurable, as the condition $W^u_{\delta(x)}(x)\subset\supp(\mu)$ is naturally satisfied by the definition of the $c$-$cu$-state $\mu$.

	Given that $\mu(\Gamma(\mu))>0$,  the pigeonhole principle  guarantees the existence of an integer $n_0$ such that:
	\[	\mu(\{x\in\Gamma(\mu),\delta(x)\ge\frac{1}{n_0}, W^u_{\delta(x)}(x)\subset\supp(\mu)\})>0.\]
Define the set:
 $$\tilde{\Gamma}=\{x\in\Gamma(\mu),\delta(x)\ge\frac{1}{n_0}, W^u_{\delta(x)}(x)\subset\supp(\mu)\}.$$ 
Since $\Delta(\mu)$, which possesses the properties in Lemma~\ref{suojinqu}, is a full measure set, we can assume that $\tilde{\Gamma}\subset\Delta(\mu)$.

Next, consider $\Gamma^\prime=\cup_{k\in\mathbb{N}}\Lambda_f(\alpha,k)$, where $\alpha$ is as in Lemma~\ref{linyu}. By Lemma~\ref{linyu},  we have:
$$\mu(\Gamma^\prime)=\mu(\cup_{k\in\mathbb{N}}\Lambda_f(\alpha,k))=1.$$
Since $\mu(\tilde{\Gamma})>0$,    the Birkhoff Ergodic Theorem \cite{57} guarantees the existence of a full-measure subset  $\Gamma(\mu)^u\subset\Gamma^\prime$ such that for every $x\in \Gamma(\mu)^u$:
\begin{equation}\label{birkho}    
\lim_{n\rightarrow+\infty}\frac{\sum_{0\le j\le n-1}\delta_{f^{-j}(x)}}{n}(\tilde{\Gamma})=\mu(\tilde{\Gamma})>0.\end{equation}

 Now, let us fix any $\varepsilon\le\min\{\varepsilon_0,\frac{1}{n_0}\}$, where $\varepsilon_0$ is given in Lemma~\ref{kakaka}. For any $x\in \Gamma(\mu)^u$,  there exist infinitely many integers  $j\geq 1$ such that $f^{-j}(x)\in\tilde{\Gamma}$.   By Lemma~\ref{kakaka}, the diameter of $f^{-n}(W^u_\loc(x))$   becomes arbitrarily small for sufficiently large $n$, with respect to the metric on $f^{-n}(W^u_\loc(x))$.

Choosing $\varepsilon$ much smaller than $\frac{1}{n_0}$, we observe that the radius of the local unstable manifold for points in $\tilde{\Gamma}$ is uniformly  bounded below by $\frac{1}{n_0}$. Therefore, for  any $x\in \Gamma(\mu)^u$, we can select sufficiently large $j$ such  that $f^{-j}(x)\in\tilde{\Gamma}$ and diameter of $f^{-j}(W^u_\loc(x))$ is smaller than $\varepsilon$. Since $\tilde{\Gamma}\subset\Delta(\mu)$,  by applying Lemma~\ref{suojinqu}, we can then conclude that:
 $$f^{-j}(W^u_\loc(x))\subset W^u_{\frac{1}{n_0}}(f^{-j}(x)).$$
Since $W^u_{\frac{1}{n_0}}(f^{-j}(x))\subset\supp(\mu)$ and  considering the invariance of the support of  $\mu$, we conclude that there exists a full-measure subset  $\Gamma(\mu)^u\subset\Gamma^\prime$ such that $W^u_\loc(x)\subset\supp(\mu)$ for every $x\in\Gamma(\mu)^u$. 
    
    Now, we prove sufficiency. By Lemma~\ref{linyu} and Lemma~\ref{kakaka}, there is a positive-measure Pesin block  $\Lambda_f(\alpha,l)$ such that $\mu$-almost every $x\in\Lambda_f(\alpha,l)$, 
    \[W^u_{\delta(\alpha,l)}(x)= W^u_\loc(x)\subset\supp(\mu)\]
    where $\delta(\alpha,l)$ as in Lemma~\ref{kakaka}. Consider a constant function $\delta:\Lambda(\alpha,l)\to \R^+$   defined by $\delta\equiv\delta(\alpha,l)/2$. It follows directly from the definition that $\mu\in  EG^{cu}(f)$.
\end{proof}

\smallskip
\smallskip
\smallskip

\section{Proof of Theorem~A and B}

In this section, we provide the proof of the main theorems.  We fix $\alpha,l$ and any $\varepsilon\in(0,\frac{1}{4})$ as specified in Lemma~\ref{linyu}, and apply Lemmas \ref{zhuqia}, \ref{linyu},   \ref{kakaka} and Theorem~\ref{uniform} with these fixed parameters. Additionally, we simultaneously fix $\tilde{a}$ as given in Lemma~\ref{zhuqia}.

The Pesin block $\Lambda_g(\alpha,l)$ has weight for all $c$-Gibbs $u$-states of $g$ where $g\in\tilde{\U}$ factoring over Anosov as $f$ (see Lemma~\ref{linyu}).
The proof of the following lemma illustrates one approach for selecting a skeleton.

\begin{lem}\label{A}
	For any $f$ that factors over Anosov with a $c$-mostly expanding  and $c$-mostly contracting center, if $EG^{cu}(f)\neq\emptyset$, then there exists a skeleton $T(f)=\{q_1,...,q_\ell\}$ such that
	\begin{itemize}
		\item for each $\mu\in EG^{cu}(f)$, there exists at least one $q_i\in T(f)$ such that
		\[
		\supp(\mu)=\overline{W^u(\Orb(q_i))}, \overline{B(\mu)}\supset\overline{W^s(\Orb(q_i))};
		\]
		\item for each $q_j\in T(f)$, there exists at least one $\nu\in EG^{cu}(f)$ such that
		\[
		\supp(\nu)=\overline{W^u(\Orb(q_j))}, \overline{B(\nu)}\supset\overline{W^s(\Orb(q_j))}.
		\]
	\end{itemize}
\end{lem}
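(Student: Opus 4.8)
\emph{Plan.} The idea is to attach to each $c$-$cu$-state a hyperbolic periodic point lying in a fixed Pesin block, to show that the support of the measure and the closure of its basin are recovered from the closures of the invariant manifolds of that periodic point, and then to thin the a priori infinite family of these periodic points down to a finite skeleton by a compactness argument inside the Pesin block.

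\textbf{Setup.} Let $\alpha,l,\varepsilon,\tilde a$ be as fixed at the start of the section. Given $\mu\in EG^{cu}(f)$, Lemma~\ref{linyu} gives $\mu(\Lambda_f(\alpha,l))>1-\varepsilon$, so the set $\Lambda_f(\alpha,l)\cap\Gamma(\mu)^u\cap B(\mu)\cap R$ has positive measure, where $\Gamma(\mu)^u$ is the full-measure set of Theorem~\ref{uniform} ($W^u_\loc(x)\subset\supp(\mu)$ on it) and $R$ is the full-measure set of points recurrent to $\Lambda_f(\alpha,l)$. Pick $x_\mu$ in this set; then $W^u_\loc(x_\mu)\subset\supp(\mu)$, $W^s_\loc(x_\mu)\subset W^s(x_\mu,f)\subset B(\mu)$ (basins are saturated by stable sets, $M$ being compact so continuous observables are uniformly continuous), and $x_\mu$ recurs to $\Lambda_f(\alpha,l)$. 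Lemma~\ref{zhuqia} at $x_\mu$ produces a hyperbolic periodic point $q_\mu\in\Lambda_f(\tfrac{\alpha}{2},l)$ of stable index $\dim E^{cs}$ with $W^u_\loc(q_\mu)\cap W^s_\loc(x_\mu)\ne\emptyset$ and $W^s_\loc(q_\mu)\cap W^u_\loc(x_\mu)\ne\emptyset$; by Lemma~\ref{kakaka} these local manifolds are tangent to $E$ resp. $F$ everywhere and $E\oplus F=TM$, so both intersections are automatically transverse.

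\textbf{Capture relations.} First, $\overline{W^u(\Orb(q_\mu))}\subset\supp(\mu)$ and $\overline{W^s(\Orb(q_\mu))}\subset\overline{B(\mu)}$: since $W^u_\loc(x_\mu)$ crosses $W^s_\loc(q_\mu)$ transversally, the inclination ($\lambda$-) lemma at the periodic point $q_\mu$ makes $f^n(W^u_\loc(x_\mu))\subset\supp(\mu)$ accumulate in $C^1$ on $W^u_\loc(q_\mu)$, so $W^u_\loc(q_\mu)\subset\supp(\mu)$, and $f$-invariance and closedness give the first inclusion; likewise, the disk $W^s_\loc(x_\mu)\subset B(\mu)$ (tangent to $F$) crosses $W^u_\loc(q_\mu)$ transversally, so the $\lambda$-lemma for $f^{-1}$ at $q_\mu$ makes $f^{-n}(W^s_\loc(x_\mu))\subset B(\mu)$ accumulate in $C^1$ on $W^s_\loc(q_\mu)$, whence $W^s_\loc(q_\mu)\subset\overline{B(\mu)}$ and then the second inclusion. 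The reverse inclusion $\supp(\mu)\subset\overline{W^u(\Orb(q_\mu))}$ — globalizing a local statement — is the crux. Because $x_\mu$ is Pesin-regular and recurrent, the forward iterates of $W^u_\loc(q_\mu)$ (which meets $W^s_\loc(x_\mu)$ transversally) flatten in $C^1$ onto $W^u_\loc(x_\mu)$ along return times, so $W^u_\loc(x_\mu)\subset\overline{W^u(\Orb(q_\mu))}$; the latter set is closed, $f$-invariant and $\F^{uu}$-saturated (an unstable set is saturated by strong-unstable leaves, and this passes to the closure by continuity of $\F^{uu}$), so it contains $\overline{\F^{uu}(x_\mu,f)}$. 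Performing this at $\mu$-a.e. point and grouping points by the homoclinic class of the attached periodic point produces $f$-invariant measurable sets; ergodicity of $\mu$ picks one class $C$, so $\overline{\F^{uu}(y,f)}\subset\overline{W^u(\Orb(q))}$ for $\mu$-a.e. $y$ and all $q\in C$, hence $\supp(\mu)\subset\overline{W^u(\Orb(q))}$. Together with the first inclusion (valid for $q\in C$ as well, since its manifolds meet the $\mu$-typical points as above) this gives $\supp(\mu)=\overline{W^u(\Orb(q))}$; replacing $q_\mu$ by such a $q\in C$ we obtain $\supp(\mu)=\overline{W^u(\Orb(q_\mu))}$ and $\overline{B(\mu)}\supset\overline{W^s(\Orb(q_\mu))}$ at once.

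\textbf{Finite skeleton and conclusion.} All $q_\mu$'s lie in the compact set $\Lambda_f(\tfrac{\alpha}{2},l)$; by the discussion following Lemma~\ref{kakaka}, two of them that are close enough have local stable/unstable manifolds crossing transversally, hence are homoclinically related and so lie in a common homoclinic class — a class whose periodic points all share the same $\overline{W^u(\Orb(\cdot))}$ and $\overline{W^s(\Orb(\cdot))}$ (inclination lemma). Covering $\Lambda_f(\tfrac{\alpha}{2},l)$ by finitely many small balls shows that only finitely many — say $\ell$ — homoclinic classes occur among the $q_\mu$'s; pick one representative $q_i$ from each and set $T(f)=\{q_1,\dots,q_\ell\}$, a finite set of hyperbolic periodic points of stable index $\dim E^{cs}$. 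It is a skeleton: were $q_i\ne q_j$ to satisfy both $W^u(q_i)\cap W^s(q_j)\ne\emptyset$ and $W^u(q_j)\cap W^s(q_i)\ne\emptyset$, these (again automatically transverse) intersections would make $q_i\sim q_j$, putting them in the same class, contrary to the choice of representatives. Finally, every $\mu\in EG^{cu}(f)$ has its $q_\mu$ in some class with representative $q_i\in T(f)$, so $\supp(\mu)=\overline{W^u(\Orb(q_\mu))}=\overline{W^u(\Orb(q_i))}$ and $\overline{W^s(\Orb(q_i))}=\overline{W^s(\Orb(q_\mu))}\subset\overline{B(\mu)}$, giving the first bullet; conversely each $q_i$ equals $q_\mu$ for some $\mu$, which witnesses the second. (As a byproduct, distinct supports of $c$-$cu$-states give distinct classes, so there are at most $\ell$ of them — this is part (1) of Theorem~A.) The main obstacle is the globalization above, where the $\lambda$-lemma, the flattening of transverse disks in a Pesin block, the $\F^{uu}$-saturation of $\supp(\mu)$ and $\overline{W^u(\Orb(q_\mu))}$, and the ergodicity of $\mu$ must be used together.
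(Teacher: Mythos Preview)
Your overall framework --- attach to each $c$-$cu$-state a periodic point in $\Lambda_f(\tfrac{\alpha}{2},l)$ via Lemma~\ref{zhuqia}, read off the inclusions $\overline{W^u(\Orb(q_\mu))}\subset\supp(\mu)$ and $\overline{W^s(\Orb(q_\mu))}\subset\overline{B(\mu)}$ from the inclination lemma, then extract a finite skeleton by compactness inside the Pesin block --- is exactly the paper's. The skeleton extraction and the verification of the skeleton axioms are also essentially the same.

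The real divergence is in the reverse inclusion $\supp(\mu)\subset\overline{W^u(\Orb(q_\mu))}$, and here your argument has genuine gaps. You invoke a $\lambda$-lemma \emph{at the non-periodic Pesin-regular point} $x_\mu$ (forward iterates of $W^u_\loc(q_\mu)$ ``flattening onto'' $W^u_\loc(x_\mu)$ along return times); this non-uniform inclination statement is not among the tools the paper provides, and you do not prove it. You then run a ``perform this at $\mu$-a.e.\ point and group by homoclinic class'' argument whose measurability you do not address, and which in any case is unnecessary: once $x_\mu\in\overline{W^u(\Orb(q_\mu))}$ and $x_\mu\in B(\mu)$, the forward orbit of $x_\mu$ is already dense in $\supp(\mu)$ and lies in the closed $f$-invariant set $\overline{W^u(\Orb(q_\mu))}$, so you are done without any grouping.

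The paper bypasses all of this with a one-line observation you overlooked. From Lemma~\ref{zhuqia} you already have a point
\[
y\in W^s_{\delta(\alpha,l)}(x_\mu)\cap W^u_{\delta(\frac{\alpha}{2},l)}(q_\mu).
\]
Since $y$ lies in the stable set of $x_\mu\in B(\mu)$, also $y\in B(\mu)$, so $\tfrac{1}{n}\sum_{i=0}^{n-1}\delta_{f^i(y)}\to\mu$. But $y\in W^u(q_\mu)$ forces $f^i(y)\in W^u(\Orb(q_\mu))$ for every $i\ge0$, hence $\supp(\mu)\subset\overline{W^u(\Orb(q_\mu))}$. This uses nothing beyond the transverse intersection you already produced and the fact that $x_\mu\in B(\mu)$; no Pesin $\lambda$-lemma, no $\F^{uu}$-saturation, no homoclinic-class bookkeeping.
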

\begin{proof}
For any $\mu\in EG^{cu}(f)$,  Theorem~\ref{uniform} ensures that for   $\mu$-almost every $x\in\Lambda_f(\alpha,l)$ 
\[W^u_{\delta(\alpha,l)}(x)= W^u_\loc(x)\subset\supp(\mu).\]
Since $\Lambda_f(\alpha,l)$  has positive measure  for any $c$-$cu$-states(see Lemma~\ref{linyu}), it follows that $$\mu(\supp(\mu|_{\Lambda_f(\alpha,l)}))=\mu(\Lambda_f(\alpha,l))>0.$$ 
Next,  take  $x_\mu\in\supp(\mu|_{\Lambda_f(\alpha,l)})\cap B(\mu)\cap \Gamma(\mu)^u$, where $\Gamma(\mu)^u$ as in Theorem~\ref{uniform}.  By Lemma~\ref{zhuqia},  $B(x_\mu,\tilde{a})$ has some hyperbolic periodic point $p_{x_\mu}\in\Lambda_f(\frac{\alpha}{2},l)$ such that
\begin{equation}\label{periodic}
	W^u_{\delta(\alpha,l)}(x_\mu)\cap W^s_{\delta(\frac{\alpha}{2},l)}(p_{x_\mu})\ne\emptyset, W^s_{\delta(\alpha,l)}(x_\mu)\cap W^u_{\delta(\frac{\alpha}{2},l)}(p_{x_\mu})\ne\emptyset.
\end{equation}
 Since the local unstable manifolds and local stable manifolds of point in Pesin blocks are tangent to subbundles $E$ and $F$, respectively, it follows that points in $W^u_{\delta(\alpha,l)}(x_\mu)\cap W^s_{\delta(\frac{\alpha}{2},l)}(p_{x_\mu})$ and $W^s_{\delta(\alpha,l)}(x_\mu)\cap W^u_{\delta(\frac{\alpha}{2},l)}(p_{x_\mu})$ respectively are transverse intersections between stable manifolds and unstable manifolds.  Notice that $W^s(x_\mu)\subset B(\mu)$ and $W^u_{\delta(\alpha,l)}(x_\mu)\subset\supp(\mu)$.  By the inclination lemma, we have:
\begin{equation}\label{basin}
\overline{W^s(\Orb(p_{x_\mu}))}\subset \overline{B(\mu)}.
\end{equation}
and $\overline{W^u(\Orb(p_{x_\mu}))}\subset\supp(\mu).$

Now, take $y\in W^s_{\delta(\alpha,l)}(x_\mu)\cap W^u_{\delta(\frac{\alpha}{2},l)}(p_{x_\mu})$. Since $y\in W^s_{\delta(\alpha,l)}(x_\mu)\subset B(\mu)$ and $y\in W^u_{\delta(\frac{\alpha}{2},l)}(p_{x_\mu})$, it follows that:
\[\supp(\lim_{n\to+\infty}\frac{\sum_{0\le i\le n-1}\delta_{f^i(y)}}{n})\subset \overline{W^u(\Orb(p_{x_\mu}))}\subset\supp(\mu),\]
and \[\supp(\lim_{n\to+\infty}\frac{\sum_{0\le i\le n-1}\delta_{f^i(y)}}{n})=\supp(\mu).\] Thus, we conclude that:
\begin{equation}\label{supp}
\overline{W^u(\Orb(p_{x_\mu}))}=\supp(\mu).
\end{equation} 

Define  $T(f)^\prime=\{p_{x_\mu}:\mu\in EG^{cu}(f)\}$, which is the set  of  periodic points  corresponding to each $c$-$cu$-state, obtained
as described above. Notice that $T(f)^\prime\subset\Lambda_f(\frac{\alpha}{2},l)$.   Since every periodic point in $\Lambda_f(\frac{\alpha}{2},l)$ has   stable and unstable manifolds of uniform size,      there exists a maximal subset of $T(f)^\prime$, denoted by $T(f)=\{q_1,...,q_\ell\}$, satisfying the following properties:
\begin{itemize}
\item for any  $p_{x_\mu}\in T(f)^\prime$, there exists $q_j\in T(f)$  such that 
$W^u(p_{x_\mu})$ transversely intersects  $W^s(q_j)$, and $W^s(p_{x_\mu})$ transversely intersects  $W^u(q_j)$;
\item for any distinct $i\ne j\in\{1,2,...,\ell\}$, at least one of the following does not hold:  $W^u(q_j)\cap W^s(q_i)\neq\emptyset$ or $W^u(q_i)\cap W^s(q_j)\neq\emptyset$.
\end{itemize}
By the second item, $T(f)$ forms  a skeleton. By combining the first condition with the relationships established in~\ref{supp} and~\ref{basin}, it is straightforward to verify the remaining part of Lemma~\ref{A} using the inclination lemma.
\end{proof}

Now, we can proof  Theorem~A and B.
\begin{proof}[Proof of Theorem~A]
By Lemma~\ref{A}, we can directly obtain the results of Theorem~A.
\end{proof}
\begin{proof}[Proof of Theorem~B]
By Lemma~\ref{openness}, we can conclude the proof of the first item.

From Lemma~\ref{linyu}, we know that for  any $g\in\tilde{\mathcal{U}}$, which factors over the same Anosov as $f$, we have $\mu(\Lambda_g(\alpha,l))>1-\varepsilon$.

Furthermore,, notice that the elements of skeleton in Lemma~\ref{A} all belong to $\Lambda_g(\frac{\alpha}{2},l)$. Each point in $\Lambda_g(\frac{\alpha}{2},l)$ has the local  unstable manifold and local stable manifold with the uniform size, specifically  $\delta(\frac{\alpha}{2},l)$. We can thus collect all hyperbolic periodic points in $\Lambda_g(\frac{\alpha}{2},l)$. 

By combining  Lemma~\ref{kakaka} with the continuity of the sub-bundles along the dynamics, and by taking a small enough $C^1$-open subset $\mathcal{U}_f\subset\tilde{\mathcal{U}}$, we can ensure the existence of $\rho>0$ such that any $g\in\mathcal{U}_f$, $p_g,q_g\in\Lambda_g(\frac{\alpha}{2},l)$, if $d(p_g,q_g)\le\rho$, then
\[
W^u_{\delta(\frac{\alpha}{2},l)}(p_g)\cap W^s_{\delta(\frac{\alpha}{2},l)}(q_g)\ne\emptyset,W^u_{\delta(\frac{\alpha}{2},l)}(q_g)\cap W^s_{\delta(\frac{\alpha}{2},l)}(p_g)\ne\emptyset.
\]
Since $M$ is compact, there exists a finite covering of $M$ by $B(z_i,\frac{\rho}{2})$-balls  where $1\le i\le\tilde{\ell}$. Then each element of $T(g)$, the skeleton obtained by Lemma~\ref{A}, belongs to at most one of these $B(z_i,\frac{\rho}{2})$-balls.  

Therefore, the proof of Theorem~B is complete.
\end{proof}

\section{Qualitative Construction of Diffeomorphisms Factoring Over Anosov with a $c$-Mixed Center}\label{qual}

In this section, we construct diffeomorphisms that factor over  Anosov  with a $c$-mixed center, and we demonstrate the existence of uncountably many $c$-$cu$-states that share the same support.

Let  $h:M\rightarrow M$ be a partially hyperbolic diffeomorphism on the manifold $M$, with a tangent bundle splitting $TM=E^{uu}\oplus_{\succ} E^{cu} \oplus_{\succ} E^{cs}$. Suppose that $E^{cu}$ is either uniformly expanding, or trivial. 
Define a subset of $M$ as
$$
S=\{p_1,\cdots,p_k| \mbox{ each $p_i$ is a hyperbolic periodic point of stable index $dim E^{cs}$}\}.
$$
This set $S$ is called {\it a strong-skeleton} of $h$ if
\begin{itemize}
\item for any $x\in M$,  there exists $p_i\in T$ such that the unstable leaf tangent to $E^{uu}\oplus E^{cu}$ transversely intersects $W^s(\Orb(p_i))$ at some point;
\item $ W^u(p_j)\cap W^s(\Orb(p_i))=\emptyset$  when $i\not=j$.
\end{itemize}
Clearly, a strong-skeleton is also a skeleton in this setting. The skeleton $$
T=\{q_1,\cdots,q_m| \mbox{ each $q_i$ is a hyperbolic periodic point of stable index $dim E^{cs}$}\}.
$$
 is said to be {\it equivalent} to the strong-skeleton $S$ if $m\ge k$ and for
 each $q_i\in T$, there exists
a unique $p_{j(i)}\in S$ such that
\[ W^u(q_i)\cap W^s(\Orb(p_{j(i)}))\ne\emptyset, W^s(q_i)\cap W^u(\Orb(p_{j(i)}))\ne\emptyset.
\]
By domination, the stable and unstable manifolds of $q_i$ and $p_{j(i)}$  are  tangent to $E^{cs}$ and $E^{uu}\oplus E^{cu}$, respectively. Therefore, $W^u(q_i)$ transversely intersects $W^s(\Orb(p_{j(i)}))$ and $W^s(q_i)$ transversely intersects $W^u(\Orb(p_{j(i)}))$.
When $E^{cu}$ is uniformly expanding, by combining the definitions of the skeleton with the inclination lemma, it follows that $m = k$ and, for each $q_i \in T$, we have
\[
\overline{W^u(\Orb(q_i)}=\overline{W^u(\Orb(p_{j(i)})}; \overline{W^s(\Orb(q_i)}=\overline{W^s(\Orb(p_{j(i)})}.
\]

When constructing specific examples relevant to the research objectives, researchers often start with hyperbolic linear  automorphisms on the torus, modifying them  or placing them in the base space (e.g., derived from Anosov or partially hyperbolic skew-product).  In this paper, however, we adopt a qualitative approach,  as presented in the following proposition.
(In fact, the transitive Anosov diffeomorphism $T$ is positioned in the fiber of $f$. The final two sections offer a detailed explanation of this construction.  As we can observe from the examples in later sections, this construction is essentially a simplified version of the skew product of a skew product.
The following proposition provides the primary approach for constructing our related examples—particularly those with at least two $c$-$cu$-states that have distinct supports.)

\begin{pro}\label{pro.con} Let $A:\T^{d_1}\to\T^{d_1}$ be a hyperbolic linear automorphism, and assume that $N$ is a compact smooth Riemannian manifold. Let $f:N\to N$ be a partially hyperbolic diffeomorphism  that factors over $A$  with a partially hyperbolic splitting $E^{uu}\oplus_{\succ}E^{cs}$,  where $f$ admits a $c$-mostly contracting center along $E^{cs}$. Consider any transitive Anosov diffeomorphism $T:\T^{d_2}\to \T^{d_2}$ with the hyperbolic splitting $E^u_T\oplus E^s_T$ such that the product map $g=f\times T:N\times\T^{d_2}\to N\times\T^{d_2}$ admits a partially hyperbolic splitting \[TM=E^{uu}\oplus_{\succ}E^u_T\oplus_{\succ}( E^s_T\oplus E^{cs}),\]
where $M=N\times\T^{d_2}$.

Then, the diffeomorphism	$g$  factors over $A$ with a partially hyperbolic splitting \[TM=E^{uu}\oplus_{\succ}E^u_T\oplus_{\succ}( E^s_T\oplus E^{cs}),\] 
and satisfies the following conditions:
\begin{enumerate}
\item subbundle $E^{u}_T$ of $TM$ is $c$-mostly expanding, and subbundle $E^s_T\oplus E^{cs}$ of $TM$ is $c$-mostly contracting ; $E^{u}_T$  is as the “$E^{cu}$” in the definitions of strong-skeleton and the primary partially hyperbolic splitting of this study(the two definitions
are compatible);
\item there exists a strong-skeleton of $g$ such that
\begin{itemize}
\item the closure of the unstable manifolds through  the orbit of each periodic point in strong-skeleton supports  uncountable $c$-$cu$-state; 
\item  the maximum number of $c$-$cu$-states whose supports are pairwise distinct   is equal to the cardinality of the strong-skeleton of $g$;
\item the skeleton obained by Theorem~A is equivalent to the strong-skeleton;
\item there exists a strong-skeleton of $f$ such that the cardinality of the strong-skeleton of $f$ is equal to the cardinality of the strong-skeleton of $g$.
\end{itemize}
\end{enumerate}
Furthermore,  there exist a $C^1$-neighborhood $\mathcal{U}_f$ of $f$ and a $C^1$-neighborhood $\mathcal{U}_T$ of $T$ such that any  $\tilde{f}\in\mathcal{U}_f$ factoring over the same Anosov as $f$, and  $\tilde{T}\in\mathcal{U}_T$, $\tilde{g}=\tilde{f}\times\tilde{T}$ also factors over $A$ with a $c$-mostly expanding  and $c$-mostly contracting center.  For such $\tilde{g}$, there  exists a strong-skeleton of $\tilde{g}$ satisfying the corresponding properties as stated in the second item above.
\end{pro}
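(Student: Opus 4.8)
\textbf{Proof plan for Proposition~\ref{pro.con}.}
The plan is to build everything on top of the product structure $g = f \times T$ and to transfer the $c$-mostly contracting information from $f$ to $g$ through the Gibbs-$u$-state machinery already established (Proposition~\ref{main.pro}, Lemma~\ref{daijia}, Lemma~\ref{alpha0}). First I would verify that $g$ factors over $A$: the factor map for $g$ is $\pi_g = \pi \circ p_1$, where $p_1 : N \times \T^{d_2} \to N$ is the projection and $\pi$ is the factor map for $f$; the semiconjugacy $\pi_g \circ g = A \circ \pi_g$, the preservation of the strong-unstable foliation (which for $g$ is $\F^{uu}(f)$ in the $N$-direction times points), and the identification of the center foliation of $g$ with $W^s(A)$ all follow from the corresponding properties of $f$ together with the fact that the $T$-directions $E^u_T \oplus E^s_T$ are collapsed by $p_1$. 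Dynamical coherence of $g$ is immediate from that of $f$ and the smooth (linear-type) foliations of $T$. Thus $g$ is a partially hyperbolic diffeomorphism factoring over $A$ with the stated splitting.

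Next I would establish item (1), i.e.\ that $E^u_T$ is $c$-mostly expanding and $E^s_T \oplus E^{cs}$ is $c$-mostly contracting for $g$, which by the definition given in Section~\ref{2} means: every $c$-Gibbs $u$-state of $g$ has only positive Lyapunov exponents along $E^u_T$ and only negative ones along $E^s_T \oplus E^{cs}$. The key structural fact is that any $c$-Gibbs $u$-state $\mu$ of $g = f \times T$ projects under $(p_1)_*$ to a $c$-Gibbs $u$-state of $f$ — this uses that the $\pi^{-1}_g$-Markov partition of $g$ is $\pi^{-1}$-Markov-partition-of-$f$ times a Markov partition for $T$, and that the reference measures on strong-unstable plaques of $g$ push forward correctly — and that the exponents along $E^s_T$ and $E^u_T$ are globally bounded away from $0$ because $T$ is uniformly hyperbolic. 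So the $E^u_T$-exponent is positive automatically; the $E^s_T$-exponent is negative automatically; and the $E^{cs}$-exponents of $\mu$ are negative because $(p_1)_*\mu$ is a $c$-Gibbs $u$-state of $f$ and $f$ has a $c$-mostly contracting center along $E^{cs}$. This is where the roles are re-indexed so that ``$E^{cu}$'' in the body of the paper becomes $E^u_T$ here; I would remark that the two definitions of strong-skeleton are compatible precisely because $E^u_T$ is in fact uniformly expanding, so the transverse-intersection condition for a strong-skeleton is automatically a skeleton condition.

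For item (2) I would construct the strong-skeleton of $g$ from a strong-skeleton of $f$. Since $E^{cs}$ is $c$-mostly contracting for $f$ and $E^{cu}$ of $f$ is trivial or uniformly expanding, the theory of \cite{CM} (or \cite{DVY}) gives a skeleton, and one extracts from it a strong-skeleton $S_f = \{p_1, \dots, p_k\}$ in the sense defined above: every strong-unstable leaf of $f$ meets some $W^s(\Orb(p_i))$ transversally, and $W^u(p_j) \cap W^s(\Orb(p_i)) = \emptyset$ for $i \ne j$. I would then let $\{r_1, \dots, r_s\}$ be the (finitely many) hyperbolic periodic orbits of the transitive Anosov $T$ — or, more economically, a single periodic point $r$ whose orbit is dense enough — and set $S_g = \{ (p_i, r_{\sigma(i)}) \}$ with the combinatorics chosen so that, using the product structure $W^{u/s}_g(p_i,r) = W^{u/s}_f(p_i) \times W^{u/s}_T(r)$ and the transitivity of $T$, the pair $(S_g)$ is again a strong-skeleton; since $T$ is transitive its unstable manifolds are dense, so the first strong-skeleton axiom for $g$ reduces to that for $f$, and the disjointness axiom can be arranged because distinct $p_i$ already have disjoint $W^u/W^s$ incidences. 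The statement that the closure of $W^u(\Orb(q))$ for $q$ in the strong-skeleton supports \emph{uncountably many} $c$-$cu$-states is the point where the product structure does real work: for each point $z$ in the (positive-Lebesgue-measure) part of a strong-unstable leaf whose forward orbit equidistributes to a physical measure of $f$ supported on $\overline{W^u(\Orb(p_i))}$, and for each choice of a $T$-generic point in the fiber, one produces a $c$-Gibbs $u$-state of $g$, and because $T$ has uncountably many distinct ergodic measures (e.g.\ along distinct orbits) one gets uncountably many ergodic $c$-Gibbs $u$-states with the same support $\overline{W^u(\Orb(p_i))} \times \T^{d_2}$; each is a $c$-$cu$-state because its local Pesin unstable manifold is $W^u_{\loc,f} \times W^u_{\loc,T}$, which lies in the support by the uniform-expansion of $E^u_T$ and the inclination-lemma argument of Lemma~\ref{A}. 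Equivalence of the Theorem~A-skeleton with this strong-skeleton, and the count ``max number of $c$-$cu$-states with pairwise distinct supports $=$ cardinality of the strong-skeleton'', then follow by matching Lemma~\ref{A} against the strong-skeleton axioms exactly as in the discussion preceding the proposition (using that $E^u_T$ uniformly expanding forces $m = k$ and $\overline{W^{u/s}(\Orb(q_i))} = \overline{W^{u/s}(\Orb(p_{j(i)}))}$).

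Finally, for the openness clause I would invoke Lemma~\ref{openness} and Theorem~B: the hypotheses on $\tilde f$ (factoring over the same $A$, $C^1$-close to $f$) put $\tilde f$ in the neighborhood from Lemma~\ref{linyu}, so $\tilde f$ still has a $c$-mostly contracting center along $E^{cs}$; $\tilde T$ close to $T$ in $C^1$ is still transitive Anosov (structural stability); hence $\tilde g = \tilde f \times \tilde T$ again factors over $A$ with the analogous splitting, and running items (1)–(2) verbatim with $\tilde f, \tilde T$ in place of $f, T$ produces the strong-skeleton of $\tilde g$ with the stated properties — the persistence of the strong-skeleton itself coming from the persistence of transverse intersections of stable/unstable manifolds of hyperbolic periodic orbits. \textbf{The main obstacle} I anticipate is not any single hard estimate but the careful bookkeeping in item (2): verifying that the product of a strong-skeleton of $f$ with appropriately chosen $T$-periodic orbits is genuinely a strong-skeleton of $g$ (in particular the disjointness axiom $W^u(q_j) \cap W^s(\Orb(q_i)) = \emptyset$, which can fail if the $T$-coordinates are chosen carelessly), and pinning down exactly why the support does not collapse so that the uncountable family of $c$-$cu$-states really shares one support — both of these hinge on using transitivity of $T$ in the ``right direction'' (density of $W^u_T$) while using near-rigidity of the $p_i$-incidences of $f$ to keep the supports separated.
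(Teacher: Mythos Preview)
Your proposal is correct and follows essentially the same route as the paper: factor $g$ over $A$ via $\pi_1\circ p_1$, push $c$-Gibbs $u$-states of $g$ down to $c$-Gibbs $u$-states of $f$ through $(p_1)_*$ to get the $c$-mixed center, lift the strong-skeleton of $f$ to one for $g$ by pairing with $T$-periodic points, and handle openness via structural stability of $T$ plus Lemma~\ref{openness}. The two obstacles you flag are exactly the places where the paper's execution is cleaner than your sketch: (i) rather than juggling several $T$-periodic points $r_{\sigma(i)}$, the paper takes a \emph{single fixed point} $0$ of $T$ (which exists for any transitive Anosov diffeomorphism on a torus) and sets $S_g=\{(p_i,0)\}$, so the disjointness axiom is inherited verbatim from $\{p_i\}$; (ii) the uncountable family of $c$-$cu$-states is produced by forming the \emph{product measure} $\mu_i\times\tilde\nu$ for each fully-supported ergodic $\tilde\nu$ of $T$, passing to an ergodic component $\tilde\mu$, checking $\pi_{2*}\tilde\mu=\tilde\nu$, and then using a short density claim ($f^n(W^{uu}_i(z))$ eventually $\varepsilon$-dense in $T_i$) together with $\supp(\tilde\nu)=\T^{d_2}$ to force $\supp(\tilde\mu)=T_i\times\T^{d_2}$, which by Lemma~\ref{ifonlyh} makes $\tilde\mu$ a $c$-$cu$-state.
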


\begin{rk}
 Generally, when constructing diffeomorphisms with \( c \)-contracting centers, it is observed that \( \max\{\|Df|_{E^{cs}}\| : x \in N\} \leq \varepsilon + 1 \) for any sufficiently small \( \varepsilon \), and the action on the base, as introduced earlier, is an Anosov map. This allows us to select a suitable \( T \) to satisfy the assumptions of Proposition~\ref{pro.con}. We point out that in the sense of the skeleton (by Theorem~A) being equivalent to the strong-skeleton, the maximum number of $c$-$cu$-states with pairwise distinct supports can always be tracked as $g$ is perturbed to $\tilde{g}$. 
\end{rk}

The following lemma, coming from \cite[Theorem~C]{UVYY}, may simplify our proof.

\begin{lem}\label{zhuyao}\cite{UVYY}
	Let $f$ be as in the Proposition~\ref{pro.con}. Then there exist $\pi_1:N\to \T^{d_1}$, a strong-skeleton $\{p_i:i\in\{1,2,...,\ell\}\}$, and  exactly $\ell$ ergodic $c$-Gibbs $u$-states denoted by $\{\mu_{i}:{i\in\{1,2,...,\ell\}}\}$ characterized by the following properties:
		\begin{itemize}
            \item $f$ factors over Anosov via $\pi_{1}$;
			\item for each $i\in\{1,2,...,\ell\}$, $\supp(\mu_i)=\overline{W^{uu}(\Orb(p_i),f)}$;
            \item for $j\ne i$, $\overline{W^{uu}(\Orb(p_i),f)}\cap\overline{W^{uu}(\Orb(p_j),f)}=\emptyset$,
		\end{itemize}
where each $W^{uu}(\Orb(p_i),f)$ is  the strong unstable leaves through the orbit of  $p_i$.
\end{lem}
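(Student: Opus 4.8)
\textbf{Proof proposal for Lemma~\ref{zhuyao}.} The plan is to reduce the statement directly to the structural results of \cite{UVYY} on partially hyperbolic diffeomorphisms factoring over Anosov with a $c$-mostly contracting center along $E^{cs}$, and to verify that the hypotheses of Proposition~\ref{pro.con} place $f$ precisely in that setting. First I would recall that $f:N\to N$ has the partially hyperbolic splitting $E^{uu}\oplus_\succ E^{cs}$ and, by hypothesis, admits a $c$-mostly contracting center along $E^{cs}$; this is exactly the class of maps treated in \cite{UVYY}, where a semiconjugacy $\pi_1:N\to\mathbb{T}^{d_1}$ onto the Anosov automorphism $A$ (satisfying the three conditions in the definition of ``factors over Anosov'', with $\F^c(f)$ playing the role of the center foliation) is constructed and fixed. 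I would invoke Theorem~C of \cite{UVYY} with this $\pi_1$: it produces a finite collection of hyperbolic periodic points $\{p_i : i\in\{1,\dots,\ell\}\}$, each of stable index $\dim E^{cs}$, forming a strong-skeleton of $f$ in the sense defined in this section (every strong-unstable leaf meets the stable set of the orbit of some $p_i$ transversally, and $W^u(p_j)\cap W^s(\Orb(p_i))=\emptyset$ for $i\ne j$).

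Next I would extract, again from \cite[Theorem~C]{UVYY}, the list of ergodic $c$-Gibbs $u$-states. The cited result asserts that the ergodic $c$-Gibbs $u$-states of $f$ are in bijection with the periodic points of the strong-skeleton, hence there are exactly $\ell$ of them, which I label $\{\mu_i : i\in\{1,\dots,\ell\}\}$, with $\mu_i$ associated to $p_i$. For the support statement $\supp(\mu_i)=\overline{W^{uu}(\Orb(p_i),f)}$ I would appeal to the description of supports in \cite{UVYY}: since $\pi_*(\mu_i)$ is (the unique) ergodic $u$-state of $A$, namely Lebesgue, and $c$-Gibbs $u$-states have Rokhlin disintegrations along strong-unstable plaques equal to the reference measures $\nu^{uu}_{i,x}$ (whose $\pi$-pushforwards are normalized Lebesgue on unstable plaques of $A$), the support of $\mu_i$ is $\F^{uu}$-saturated; combined with the minimality/transitivity properties of the strong-unstable foliation over the orbit of $p_i$ used in \cite{UVYY} one gets $\supp(\mu_i)=\overline{W^{uu}(\Orb(p_i),f)}$. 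Finally, the disjointness $\overline{W^{uu}(\Orb(p_i),f)}\cap\overline{W^{uu}(\Orb(p_j),f)}=\emptyset$ for $i\ne j$ follows from the second defining property of a strong-skeleton together with the inclination lemma: if the closures met, one strong-unstable leaf through $\Orb(p_i)$ would accumulate on $\Orb(p_j)$, forcing $W^u(p_i)\cap W^s(\Orb(p_j))\ne\emptyset$ after saturating (since $E^{cu}$ is trivial or uniformly expanding here, $W^{uu}$ and $W^u$ agree up to closure through periodic orbits), contradicting $W^u(p_i)\cap W^s(\Orb(p_j))=\emptyset$.

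The main obstacle I anticipate is bookkeeping rather than mathematical depth: one must make sure that the notion of ``factors over Anosov'' and of ``strong-skeleton'' used in \cite{UVYY} coincide, up to the cosmetic relabelling forced by $E^{cu}$ being trivial, with the definitions fixed in the present paper, and that the semiconjugacy $\pi_1$ delivered by \cite{UVYY} is compatible with the $\pi^{-1}$-Markov partition machinery used elsewhere here. Since Proposition~\ref{pro.con} only requires $f$ to carry the splitting $E^{uu}\oplus_\succ E^{cs}$ with a $c$-mostly contracting center, no extra regularity or transitivity hypothesis on $f$ is needed beyond what \cite{UVYY} already assumes, so the identification is essentially a matter of citing the correct statements and matching notation; I would carry this out explicitly in the write-up and otherwise defer to \cite{UVYY} for the quantitative content.
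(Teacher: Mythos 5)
Your proposal matches the paper's treatment: the paper does not prove this lemma at all but simply quotes it as \cite[Theorem~C]{UVYY}, after noting that the hypotheses of Proposition~\ref{pro.con} (splitting $E^{uu}\oplus_\succ E^{cs}$ with a $c$-mostly contracting center, factoring over $A$) place $f$ exactly in the setting of that theorem. Your additional sketches of the support and disjointness statements are consistent with, but not required by, the paper's citation-only approach.
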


\smallskip

For convenience, we will directly use the corresponding symbols (such as $\pi_1,p_i,\mu_i$) from the above lemma~\ref{zhuyao} and define $T_i=\overline{W^{uu}(\Orb(p_i),f)}$. 
Next, we will  explain that $g$ factors over $A$.

By assumption, we know that $g$ admits a partially hyperbolic splitting
\[
TM=E^{uu}\oplus_{\succ}E^u_T\oplus_{\succ}( E^s_T\oplus E^{cs}).
\]
In this setting, $E^u_T$ is used as the subbundle $E^{cu}$ in the definition of factoring over Anosov in this paper.

Let $\E^c(g)$ be defined as:
$$
\{\E^c(x,f)\times\T^{d_2}: \mbox{ where $\E^c(f)$ is the unique $f$-invariant foliation tangent to $E^{cs}$ in system $(N,f)$ and $x\in N$}\}.
$$
 This definition implies that $\E^c(g)$ is  an $g$-invariant foliation, tangent to the subbundle $E^u_T\oplus E^s_T\oplus E^{cs}$ at every point in the system $(N\times\T^{d_2},g)$.
This implies that $g$ is dynamically coherent.

For any point $(y,x)\in M$ where $y\in N,x\in \T^{d_2}$, due to the uniqueness of the strong unstable foliation, the strong-unstable leaf of the partially hyperbolic diffeomorphism $g$ at the point $(y,x)$ is given by:
\begin{equation}\label{uu-home}
W^{uu}((y,x),g)=W^{uu}(y,f)\times\{x\},
\end{equation}
where $W^{uu}(y,f)$ denotes the strong-unstable leaf of the partially hyperbolic diffeomorphism $f$ at the point $y$.
Now,  define two maps $\pi$ and $\pi_{12}$ as follows:
\[
\pi_{12}: N\times\T^{d_2} \to N, \pi_{12}(y,x)=y;
\]
\[
\pi=\pi_1\circ\pi_{12}: M\to \T^{d_1},
\]
where $\pi_1$ is  as in Lemma~\ref{zhuyao}.
The commutative diagrams with the associated maps are as follows:
\[
\begin{CD}
N\times\T^{d_2} @>g>> N\times\T^{d_2}  \\
@VV\pi_{12}V     @VV\pi_{12}V\\
N    @>f>>  N\\
@VV\pi_{1}V     @VV\pi_{1}V\\
\T^{d_1}    @>A>>  \T^{d_1}
\end{CD};
\begin{CD}
N\times\T^{d_2} @>g>> N\times\T^{d_2} \\
@VV\pi=\pi_{1}\circ \pi_{12}V     @VV\pi=\pi_{1}\circ\pi_{12}V\\
\T^{d_1}    @>A>>  \T^{d_1}.
\end{CD}
\]
By combining equation~\ref{uu-home} and the first item of Lemma~\ref{zhuyao}, we see that the map $\pi$ maps each strong-unstable leaf of $g$ homeomorphically to an unstable leaf of $A$. By the commutative diagrams and the definition of  $\E^c(g)$, it follows that $\pi(\E^c((x,y),g))=W^s(\pi(x,y),A)$. 
Thus, we can conclude that the map $g$ factors over Anosov via $\pi$.

 To show that $g$ admits a $c$-mostly expanding  and $c$-mostly contracting center, by item~\ref{d} of Proposition~\ref{main.pro}, it suffices to prove that there exists some constant $\tilde{a}>0$ such that the measurable set
$$
\{(y,x):\limsup_{n\rightarrow+\infty}\frac{1}{n}\log\| Dg^{n}|_{E^{cs}(y,x)}\|<-\tilde{a}\}
$$
has full measure for any ergodic $c$-Gibbs $u$-state of $g$. (see Lemma~\ref{lem.factoroveranosov}). 

 Given that $T$ is a transitive Anosov diffeomorphism on torus, by   \cite[Lemma~1.2]{Frank},  there exists a fixed point of $T$, which we denote by $0$. 
 Let $S=\{O_i,O_i=(p_i,0), 1\le i\le\ell\}$,  where $0$ is a fixed point of $T$. Due to  the transitivity of $T$ and the result of Frank\cite{Frank},  we have:
\begin{equation}\label{clos}
\overline{W^u(\Orb((p_i,0),g))}=\overline{W^{uu}(\Orb(p_i),f)}\times \overline{W^u(0,T)}=T_i\times\T^{d_2};
\end{equation}
\begin{equation}\label{u-stable}
W^s(\Orb((p_i,0),g))=W^{s}(\Orb(p_i),f)\times W^s(0,T)
\end{equation}
By  uniqueness, the unstable leaf tangent to $E^{uu}\oplus E^u_T$ at the point $(y,x)$ is $W^{uu}(y,f)\times W^u(x,T)$.
Since $W^s(\Orb((p_i,0),g))$ tangent to $E^s_T\oplus E^{cs}$, combining Lemma~\ref{zhuyao} with the equation $$W^u(\Orb((p_i,0),g))=W^{uu}(\Orb(p_i),f)\times W^u(0,T),$$ it  follows that $S$ is a strong-skeleton. Before proving that $g$ admits a $c$-mostly expanding  and $c$-mostly contracting center, we  first need the following lemma.

\begin{lem}\label{contained}
For any ergodic $c$-Gibbs $u$-state $\nu$ of $g$, it holds that ${\pi_{12}}_*(\nu)=\mu_j$ for some $j\in\{1,2,3,...,\ell\}$, where $\mu_j$ as in Lemma~\ref{zhuyao}. Consequently, the support of $\nu$ is contained in $T_j\times\T^{d_2}$.
\end{lem}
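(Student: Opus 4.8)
The plan is to push any ergodic $c$-Gibbs $u$-state $\nu$ of $g$ down to $N$ via $\pi_{12}$ and recognize the image as a $c$-Gibbs $u$-state of $f$, then invoke the rigidity in Lemma~\ref{zhuyao}. First I would check that ${\pi_{12}}_*\nu$ is $f$-invariant: this is immediate from the commutativity $\pi_{12}\circ g=f\circ\pi_{12}$. Next I would show that ${\pi_{12}}_*\nu$ is ergodic for $f$; this follows because $\nu$ is ergodic and $\pi_{12}$ is a factor map, so any $f$-invariant set pulls back to a $g$-invariant set of the same $\nu$-measure.

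The heart of the argument is to verify that ${\pi_{12}}_*\nu$ is a $c$-Gibbs $u$-state of $f$. The natural route is the disintegration characterization together with item~\ref{f} of Proposition~\ref{main.pro}: realize $\nu$ as a weak$^*$ limit of averages $\frac{1}{n}\sum_{j=0}^{n-1}g_*^j\nu^{uu}$ of a reference measure on a strong-unstable plaque of $g$. By the product description of the strong-unstable leaves, equation~\x{uu-home}, a strong-unstable plaque of $g$ through $(y,x)$ is $\F^{uu}_i(y,f)\times\{x\}$, and under $\pi_{12}$ it maps homeomorphically onto the strong-unstable plaque $\F^{uu}_i(y,f)$ of $f$. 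Moreover $\pi_{12}$ intertwines the reference measures: since $\pi=\pi_1\circ\pi_{12}$ and $\pi$ (resp.\ $\pi_1$) projects the strong-unstable plaques of $g$ (resp.\ $f$) onto the same $W^u_i$ of $A$ with the push-forward of the reference measure equal to normalized Lebesgue on $W^u_i$, the pushforward under $\pi_{12}$ of a $g$-reference measure on $\F^{uu}_i(y,f)\times\{x\}$ is exactly an $f$-reference measure on $\F^{uu}_i(y,f)$. Hence ${\pi_{12}}_*\left(\frac{1}{n}\sum_{j=0}^{n-1}g_*^j\nu^{uu}\right)=\frac{1}{n}\sum_{j=0}^{n-1}f_*^j({\pi_{12}}_*\nu^{uu})$, and passing to the limit, ${\pi_{12}}_*\nu$ is an accumulation point of averages of an $f$-reference measure, so by item~\ref{f} of Proposition~\ref{main.pro} it lies in $\mathrm{Gibbs}^u_c(f)$, and being ergodic it is in $\mathrm{EGibbs}^u_c(f)$.

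With ${\pi_{12}}_*\nu\in\mathrm{EGibbs}^u_c(f)$ established, Lemma~\ref{zhuyao} tells us there are exactly $\ell$ such measures, namely $\mu_1,\dots,\mu_\ell$; therefore ${\pi_{12}}_*\nu=\mu_j$ for some $j\in\{1,\dots,\ell\}$. Since $\supp(\mu_j)=T_j$, every point of $\supp(\nu)$ projects under $\pi_{12}$ into $T_j$, i.e.\ $\supp(\nu)\subset\pi_{12}^{-1}(T_j)=T_j\times\T^{d_2}$, which is the claim.

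The main obstacle I expect is the bookkeeping around the reference measures: one must make sure that the notion of $\pi^{-1}$-Markov partition and reference measure for $g$ is chosen compatibly with that of $f$ (take the $g$-Markov partition to be $\pi_{12}^{-1}$ of the $f$-Markov partition coming from the same $A$), and that the homeomorphism $\pi_{12}|_{\F^{uu}_i(y,f)\times\{x\}}$ genuinely carries one reference measure to the other rather than merely to an equivalent one. This is where the third defining property of ``factors over Anosov'' (the $\pi$-images of unstable plaques and the Lebesgue normalization) does the work, and once it is in place the limiting argument via item~\ref{f} of Proposition~\ref{main.pro} is routine.
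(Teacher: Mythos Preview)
Your proposal is correct and follows essentially the same route as the paper's proof: express $\nu$ as the weak$^*$ limit of Birkhoff averages of a reference measure on a strong-unstable plaque of $g$, push forward via $\pi_{12}$ using the commutativity $\pi_{12}\circ g=f\circ\pi_{12}$ and the compatibility $\pi=\pi_1\circ\pi_{12}$ of Markov partitions and reference measures, then invoke item~\ref{f} of Proposition~\ref{main.pro} and Lemma~\ref{zhuyao}. The bookkeeping concern you anticipated (that the $g$-Markov partition is $\pi_{12}^{-1}$ of the $f$-Markov partition and that reference measures intertwine because $\pi_1$ is a homeomorphism on strong-unstable plaques) is exactly what the paper spells out in equations~\x{laiba}--\x{qier}.
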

\begin{proof}
First, notice that $f$ admits a $\pi_1^{-1}$-Markov partition,  and $g$ admits a $\pi^{-1}$-Markov partition. According to the exchange diagrams and the definition of $\pi_{12}$, we observe that the partition \[\{\pi_1^{-1}(\mathcal{R}_{1})\times\T^{d_2},\cdots,\pi_1^{-1}(\mathcal{R}_{k})\times\T^{d_2}\}\] forms a $\pi^{-1}$-Markov partition of $g$; Meanwhile, the partition \[\{\pi_1^{-1}(\mathcal{R}_{1}),\cdots,\pi_1^{-1}(\mathcal{R}_{k})\}\] is a $\pi^{-1}_1$-Markov partition of $f$, where   $\{\mathcal{R}_{1},\cdots,\mathcal{R}_{k}\}$ is a Markov partition of $A$.

By the definition of an ergodic $c$-Gibbs $u$-state and dominated convergence theorem, and following a similar argument as in the proof of Lemma~\ref{xin}, there exists a strong unstable plaque $W^{uu}_i((a,c),g)=W^{uu}_i(a,f)\times\{c\}$ with a reference measure $\nu^{uu}_i((a,c),g)$ for some point $(a,c)$ such that
\[
\lim_{n\to+\infty}\frac{1}{n}\sum_{j=0}^{n-1}g_*^j\nu^{uu}_i((a,c),g)=\nu.
\]

Next, we have the following relationships:
\begin{equation}\label{laiba}
\pi_{12}(W^{uu}_i((a,c),g))=\pi_{12}(W^{uu}_i(a,f)\times \{c\})=W^{uu}_i(a,f);
\end{equation}
\begin{equation}\label{qiyi}
{\pi_1}_*\circ{\pi_{12}}_*(\nu^{uu}_i((a,c),g))=\pi_*(\nu^{uu}_i((a,c),g))=\vol^u_{i,\pi(a,c)}=\vol^u_{i,\pi_1(a)};
\end{equation}
\begin{equation}\label{qier}
\pi_1(W^{uu}_i(a,f))=W^u_i(\pi_{1}(a),A).
\end{equation}
Combine above equlities \ref{laiba}, \ref{qiyi} and \ref{qier}, we conclude that ${\pi_{12}}_*(\nu^{uu}_i((a,c),g))$ is the reference measure on $W^{uu}_i(a,f)$.

By the continuity of $\pi_{12}$, we obtain:
\[
{\pi_{12}}_*(\lim_{n\to+\infty}\frac{1}{n}\sum_{j=0}^{n-1}g_*^j\nu^{uu}_i((a,c),g))={\pi_{12}}_*(\nu)=\lim_{n\to+\infty}\frac{1}{n}\sum_{j=0}^{n-1}{\pi_{12}}_*g_*^j\nu^{uu}_i((a,c),g).
\]
Using the commutativity of the exchange diagram, we have:
\[
\lim_{n\to+\infty}\frac{1}{n}\sum_{j=0}^{n-1}{\pi_{12}}_*g_*^j\nu^{uu}_i((a,c),g)=\lim_{n\to+\infty}\frac{1}{n}\sum_{j=0}^{n-1}f^j_*\circ{\pi_{12}}_*\nu^{uu}_i((a,c),g).
\]
Since ${\pi_{12}}_*(\nu^{uu}_i((a,c),g))$ is the reference measure on $W^{uu}_i(a,f)$, by item~\ref{f} of Proposition~\ref{main.pro},  this limit  is a $c$-Gibbs $u$-state of $f$. Given that $\nu$ is ergodic, it follows that ${\pi_{12}}_*(\nu)$ is also an ergodic $c$-Gibbs $u$-state.

Without loss of generality, assume that  ${\pi_{12}}_*(\nu)=\mu_j$ for some $j\in\{1,2,...,\ell\}$, as given in Lemma \ref{zhuyao}.
Observe that ${\pi_{12}}_*(\nu)(T_j)=\nu(T_j\times \T^{d_2})=\mu_j(T_j)=1$. This implies that
\begin{equation}\label{item1}
\supp(\nu)\subset T_j\times \T^{d_2}.
\end{equation}
\end{proof}

\begin{lem}\label{lem.factoroveranosov}There exists a constant $\tilde{a}>0$ such that
the measurable set
$$
\{(y,x):\limsup_{n\rightarrow+\infty}\frac{1}{n}\log\| Dg^{n}|_{E^{cs}(y,x)}\|<-\tilde{a},x\in\T^{d_2}\}
$$
has full measure for any ergodic $c$-Gibbs $u$-state of $g$. Consequently,  the subbundle $E^{cs}\oplus E^s_T$ is $c$-mostly contracting.
\end{lem}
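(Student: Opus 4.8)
The plan is to decompose $Dg^n|_{E^{cs}\oplus E^s_T}$ into its two invariant pieces and control each separately, using the product structure $g=f\times T$. For a point $(y,x)\in N\times\T^{d_2}$ the bundle $E^{cs}_{(y,x)}\oplus (E^s_T)_{(y,x)}$ splits as $E^{cs}_y\oplus (E^s_T)_x$, and $Dg^n$ acts diagonally: $Dg^n|_{E^{cs}_y}=Df^n|_{E^{cs}_y}$ and $Dg^n|_{(E^s_T)_x}=DT^n|_{(E^s_T)_x}$. Since $T$ is Anosov, there are uniform constants $c_T>0$, $\lambda_T\in(0,1)$ with $\|DT^n|_{E^s_T}\|\le c_T\lambda_T^n$ for all $x$ and all $n$, so the $E^s_T$-direction contributes a uniform negative exponential rate $\log\lambda_T<0$ with no dependence on the measure. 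So the whole lemma reduces to producing a uniform $\tilde a_0>0$ with
\[
\limsup_{n\to+\infty}\frac{1}{n}\log\|Df^n|_{E^{cs}_y}\|<-\tilde a_0
\]
for $\nu$-a.e.\ $(y,x)$, for every ergodic $c$-Gibbs $u$-state $\nu$ of $g$; then $\tilde a=\min\{\tilde a_0,-\log\lambda_T\}$ (shrunk slightly) works, because the norm of a block-diagonal map is the max of the block norms.

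The key step is to transport this question from $g$ down to $f$ via $\pi_{12}$, using Lemma~\ref{contained}. That lemma gives ${\pi_{12}}_*(\nu)=\mu_j$, an ergodic $c$-Gibbs $u$-state of $f$, for some $j$. The hypothesis that $f$ has a $c$-mostly contracting center along $E^{cs}$ means precisely (cf.\ Lemma~\ref{daijia} applied to $f$ in its own right, or the definition of $c$-mostly contracting) that there is a uniform $\tilde a_0>0$ such that the set $\{y\in N:\limsup_n \tfrac1n\log\|Df^n|_{E^{cs}_y}\|<-\tilde a_0\}$ has full $\mu$-measure for every $c$-Gibbs $u$-state $\mu$ of $f$; one extracts such a uniform rate exactly as in the proof of Lemma~\ref{daijia} and Lemma~\ref{alpha0} (compactness of $\mathrm{Gibbs}^u_c(f)$, Oseledets, Kingman), so there is nothing new to prove here — it is the hypothesis of Proposition~\ref{pro.con}. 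Call that full-measure set $G_j\subset N$. Then $\nu(\pi_{12}^{-1}(G_j))=\mu_j(G_j)=1$, and for every $(y,x)\in\pi_{12}^{-1}(G_j)$ we have $y\in G_j$, hence the $E^{cs}$-block rate is $<-\tilde a_0$. Combining with the uniform $E^s_T$-rate, the displayed set in the lemma contains $\pi_{12}^{-1}(G_j)$ (intersected with $N\times\T^{d_2}$, which is everything), so it has full $\nu$-measure.

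Finally, the consequence "$E^{cs}\oplus E^s_T$ is $c$-mostly contracting" is immediate: by definition $g$ has $c$-mostly contracting center along $E^s_T\oplus E^{cs}$ iff every ergodic $c$-Gibbs $u$-state of $g$ gives full measure to the set of points with only negative Lyapunov exponents along that bundle, and we have just shown all these exponents are bounded above by $-\tilde a<0$ on a full-measure set. The only place requiring care — the "main obstacle" — is the bookkeeping that $E^{cs}$ of $g$ and $E^{cs}$ of $f$ genuinely agree under $D\pi_{12}$, i.e.\ that the product splitting is compatible with the labelling of sub-bundles fixed earlier; this is where one invokes uniqueness of the dominated splitting together with the explicit product form $Dg=Df\times DT$, exactly as was done for the strong-unstable foliation in~\eqref{uu-home}. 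Everything else is a routine max-of-two-rates estimate.
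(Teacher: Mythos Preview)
Your proposal is correct and follows essentially the same route as the paper: identify $Dg^n|_{E^{cs}(y,x)}$ with $Df^n|_{E^{cs}(y)}$ via the product structure, push an ergodic $c$-Gibbs $u$-state $\nu$ of $g$ down to $\mu_j\in EGibbs^u_c(f)$ via Lemma~\ref{contained}, and then invoke the uniform negative rate along $E^{cs}$ for $f$ coming from Lemma~\ref{alpha0}. The only cosmetic difference is that the displayed set in the lemma concerns $E^{cs}$ alone, so your block-diagonal discussion of $E^s_T$ is not needed for that part (the paper just records $\|Dg^n|_{E^{cs}(y,x)}\|=\|Df^n|_{E^{cs}(y)}\|$ directly); it is, however, exactly what one uses for the ``Consequently'' clause about $E^{cs}\oplus E^s_T$, which the paper leaves implicit.
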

\begin{proof}
 By directly applying Lemma~\ref{alpha0}, considering the case where $E^{cu}$ is trivial, there exists a constant $\tilde{a}>0$ such that the measurable set
$$
\{y: \limsup_{n\rightarrow+\infty}\frac{1}{n}\log\| Df^{n}|_{E^{cs}(y)}\|<-\tilde{a}\}
$$
has full measure for any  ergodic $c$-Gibbs $u$-state of $f$.

For any $n\in\mathbb{Z},(y,x)\in M$,  it holds that:
\begin{equation}\label{manzu1}
\log\|Dg^n|_{E^{cs}(y,x)}\|=\log\|Df^n|_{E^{cs}(y)}\|.
\end{equation}
Using the commutative diagrams and the equation~\ref{manzu1} above (Note that in both systems $(N,f),(N\times\T^{d_2},g)$, the corresponding \( E^{cs} \) is always invariant), we have
$$
\pi_{12}^{-1}(\{y: \limsup_{n\rightarrow+\infty}\frac{1}{n}\log\| Df^{n}|_{E^{cs}(y)}\|<-\tilde{a}\})=\{(y,x):\limsup_{n\rightarrow+\infty}\frac{1}{n}\log\| Dg^{n}|_{E^{cs}(y,x)}\|<-\tilde{a},x\in\T^{d_2}\}.
$$

Assuming  $\nu$ be an  ergodic $c$-Gibbs $u$-state as described in  Lemma~\ref{contained}. According to Lemma~\ref{contained},  we have ${\pi_{12}}_*(\nu)=\mu_j$ for some $j$. Thus,
$$
{\pi_{12}}_*(\nu)(\{y: \limsup_{n\rightarrow+\infty}\frac{1}{n}\log\| Df^{n}|_{E^{cs}(y)}\|<-\tilde{a}\})=\mu_j(\{y: \limsup_{n\rightarrow+\infty}\frac{1}{n}\log\| Df^{n}|_{E^{cs}(y)}\|<-\tilde{a}\})=1
$$
This implies that
$$
\nu(\{(y,x):\limsup_{n\rightarrow+\infty}\frac{1}{n}\log\| Dg^{n}|_{E^{cs}(y,x)}\|<-\tilde{a},x\in\T^{d_2}\})=1.
$$
Due to  arbitrariness of ergodic $c$-Gibbs $u$-state $\nu$,  we conclude  from item \ref{d} of Proposition \ref{main.pro} that the largest Lyapunov exponent along $E^{cs}$ is  less than $-\tilde{a}$ almost everywhere for any  $c$-Gibbs $u$-state of $g$.
\end{proof}

\begin{rk}\label{mostlycontra}
We point out that if $f$  satisfies that $f$ is $C^{1+}$-partially hyperbolic diffeomorphism on the compact smooth Riemannian manifold $N$, with the splitting $E^{uu}\oplus_{\succ} E^{cs}$, where $E^{cs}$ is mostly contracting(in the sense of Gibbs $u$-states), then
we can  select a suitable transitive Anosov $C^{1+}$-diffeomorphism $T$ with hyperbolic splitting $E^{u}_T\oplus E^{s}_T$ such that the product map $f \times T : N \times \T^{d_2} \to N \times \T^{d_2}$ admits a partially hyperbolic splitting: \[TM=E^{uu}\oplus_{\succ}E^u_T\oplus_{\succ}( E^s_T\oplus E^{cs}).\]
Since Gibbs $u$-states have the properties similar to those in Proposition~\ref{main.pro} (particularly item~\ref{f}), and   the result in \cite[Theorem~A]{DVY} regarding physical measures is similar to Lemma~\ref{zhuyao}, one can show that $f\times T\in PH_{EC}^{1+}(M)$ by replacing the reference measure with the Lebesgue measure on the strong unstable leaf.  Here, $PH_{EC}^{1+}(M)$ is defined by  Mi and  Cao in \cite{CM}.
By the way, it is clear  that the cardinality of  strong-skeleton of $f\times T$ is equal to the cardinality of  strong-skeleton of $f$.
\end{rk}

Next, we will demonstrate that there exists an uncountable number of $c$-$cu$-states supported on each set $T_i \times \T^{d_2}$. Before doing so, we require the following lemma:

\begin{lem}\label{ifonlyh}
A measure $\mu$ is a $c$-$cu$-state of $g$ if and only if $\mu$ is an ergodic $c$-Gibbs $u$-state  of $g$, and  $\supp(\mu)$ is the closure of the unstable manifolds of dimension $\dim(E^{uu}\oplus E^u)$ through the orbit of some periodic point in the strong-skeleton.
\end{lem}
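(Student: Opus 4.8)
The plan is to deduce both implications from Theorem~\ref{uniform}, exploiting that here the center-unstable bundle $E^{cu}=E^u_T$ is uniformly expanding. Consequently $E:=E^{uu}\oplus E^u_T$ is uniformly expanding, it integrates to the foliation $\mathcal{F}^u_g$ whose leaf through $(y,x)$ is $W^{uu}(y,f)\times W^u(x,T)$, and the local unstable manifolds $W^u_{\loc}(\cdot)$ of Lemma~\ref{kakaka} are just plaques of $\mathcal{F}^u_g$, available with a uniform radius at every point of $M$. Two elementary observations will be used throughout: (a) each $T_i=\overline{W^{uu}(\Orb(p_i),f)}$ is closed and $\mathcal{F}^{uu}(f)$-saturated (a closure of a union of strong-unstable leaves, and $\mathcal{F}^{uu}(f)$ is a continuous foliation), so $W^{uu}(y,f)\subset T_i$ whenever $y\in T_i$; (b) since $T$ is a transitive Anosov diffeomorphism of a torus, every unstable leaf of $T$ is dense, so $\overline{\mathcal{F}^u_g(y,x)}=\overline{W^{uu}(y,f)}\times\T^{d_2}$. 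Recall also from~\ref{clos} that $S=\{O_i=(p_i,0)\}$ is a strong-skeleton of $g$ with $\overline{W^u(\Orb(O_i),g)}=T_i\times\T^{d_2}$, so the hypothesis that $\supp(\mu)$ is the closure of the unstable manifolds through the orbit of a periodic point of the strong-skeleton means exactly that $\supp(\mu)=T_j\times\T^{d_2}$ for some $j$.

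For the ``if'' direction I would argue as follows. Suppose $\mu$ is an ergodic $c$-Gibbs $u$-state of $g$ with $\supp(\mu)=T_j\times\T^{d_2}$. For $\mu$-almost every point $(y,x)$ one has $(y,x)\in\supp(\mu)$, hence $y\in T_j$; by (a), $W^{uu}(y,f)\subset T_j$, so $W^u_{\loc}((y,x),g)\subset\mathcal{F}^u_g(y,x)=W^{uu}(y,f)\times W^u(x,T)\subset T_j\times\T^{d_2}=\supp(\mu)$. Intersecting this full-measure set with $\bigcup_k\Lambda_g(\alpha,k)$ (which has full measure by Lemma~\ref{linyu}) and invoking the sufficiency part of Theorem~\ref{uniform} applied to $g$ (which has a $c$-mixed center by Lemma~\ref{lem.factoroveranosov} together with the uniform expansion of $E^u_T$), I get $\mu\in EG^{cu}(g)$, i.e. $\mu$ is a $c$-$cu$-state.

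For the ``only if'' direction, let $\mu$ be a $c$-$cu$-state of $g$; by definition it is an ergodic $c$-Gibbs $u$-state, and Lemma~\ref{contained} gives $j$ with ${\pi_{12}}_*\mu=\mu_j$ and $\supp(\mu)\subset T_j\times\T^{d_2}$, so only the reverse inclusion needs proof. The key step is to upgrade the conclusion of Theorem~\ref{uniform} to full-leaf saturation of the support: I will show $\mathcal{F}^u_g(p)\subset\supp(\mu)$ for $\mu$-almost every $p$. Write $A=\{p:W^u_{\loc}(p)\subset\supp(\mu)\}$, a full-measure set by Theorem~\ref{uniform}. For $\mu$-a.e.\ $p$, applying Birkhoff's theorem to $\mathbf{1}_A$ and $g^{-1}$ produces $n_k\to\infty$ with $g^{-n_k}(p)\in A$; then $g^{n_k}(W^u_{\loc}(g^{-n_k}(p)))\subset\supp(\mu)$ by $g$-invariance of $\supp(\mu)$, and reading the contraction estimate of Lemma~\ref{kakaka} along $E$ shows this set is a plaque of $\mathcal{F}^u_g$ about $p$ whose internal radius is at least $C^{-1}\tau^{-n_k}\delta\to\infty$; letting $k\to\infty$ gives $\mathcal{F}^u_g(p)\subset\supp(\mu)$, hence $\overline{\mathcal{F}^u_g(p)}\subset\supp(\mu)$. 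By (b), $\overline{\mathcal{F}^u_g(p)}=\overline{W^{uu}(y,f)}\times\T^{d_2}$ for $p=(y,x)$. Since $\supp(\mu)$ is the closure of any full-measure set, and since $\{y:(y,x)\ \text{lies in such a set for some }x\}$ has full $\mu_j$-measure and is contained in $\supp(\mu_j)=T_j$, taking the union over the relevant $p$ and then the closure yields $\supp(\mu)\supset T_j\times\T^{d_2}$. Combining the two inclusions, $\supp(\mu)=T_j\times\T^{d_2}=\overline{W^u(\Orb(O_j),g)}$ by~\ref{clos}, which is the desired conclusion.

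The only part that is not pure bookkeeping is the leaf-saturation upgrade in the ``only if'' direction; the rest reduces to Theorem~\ref{uniform}, Lemma~\ref{contained}, and observations (a)--(b). I expect the main point requiring care to be the uniform-expansion estimate showing that forward iterates of a fixed-radius $W^u_{\loc}$-plaque have internal radius tending to infinity along the leaf; this is immediate from Lemma~\ref{kakaka} once it is read in the unstable direction, but it is the one place where the uniform expansion of $E^{cu}=E^u_T$ (as opposed to mere non-uniform expansion) is genuinely used.
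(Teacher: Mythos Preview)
Your argument is correct in both directions. The ``if'' direction is essentially the same as the paper's: both observe that $T_j\times\T^{d_2}$ is saturated by leaves of the (uniformly expanding) foliation $\mathcal{F}^u_g$, so local Pesin unstable manifolds of typical points lie in the support; you then cite the sufficiency part of Theorem~\ref{uniform}, while the paper appeals directly to the definition of a $c$-$cu$-state with a constant $\delta$ on a positive-measure Pesin block. These are interchangeable.

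The ``only if'' direction is where you take a genuinely different route. The paper invokes Theorem~A to produce a hyperbolic periodic point $p_\mu$ in a skeleton with $\supp(\mu)=\overline{W^u(\Orb(p_\mu))}$, uses Lemma~\ref{contained} to place this inside some $\overline{W^u(\Orb(O_j))}$, and then applies the strong-skeleton axioms together with the inclination lemma to force $\overline{W^u(\Orb(p_\mu))}=\overline{W^u(\Orb(O_j))}$. You instead bypass Theorem~A and the periodic-point machinery entirely: starting from Theorem~\ref{uniform}, you use backward recurrence and the uniform expansion of $E=E^{uu}\oplus E^u_T$ to upgrade ``$W^u_{\loc}\subset\supp(\mu)$ a.e.'' to full $\mathcal{F}^u_g$-leaf saturation, then combine density of $W^u(\cdot,T)$ in $\T^{d_2}$ with ${\pi_{12}}_*\mu=\mu_j$ to get $\supp(\mu)\supset T_j\times\T^{d_2}$. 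Your approach is more self-contained and exploits the explicit product structure, at the cost of relying essentially on the uniform expansion of $E^{cu}=E^u_T$ (as you note); the paper's approach ties the lemma back into the skeleton framework developed for Theorem~A and makes visible the homoclinic relation between $p_\mu$ and the strong-skeleton point $O_j$, which is consonant with the rest of the paper but brings in heavier tools.
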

\begin{proof}
The proof of sufficiency: 
Assume that $\mu$ is  an ergodic $c$-Gibbs $u$-state  of $g$ with $\supp(\mu) = \overline{W^u(\Orb(p))}$, where $p$ belongs to the strong-skeleton.
Consider a constant function $$\delta:\overline{W^u(\Orb(p))}\cap\Lambda_g(\alpha,l)\to \R^+,$$  defined as $\delta\equiv\delta(\alpha,l)$,  and $\Lambda_g(\alpha,l)$ as in Lemma~\ref{linyu} with positive measure (Note that $g$ already has a $c$-mixed center). 
Since $E^{uu}\oplus E^u$ is uniformly expanding, the continuity of the unstable manifolds tangent to $E^{uu}\oplus E^u$ implies the following: for any $x\in\overline{W^u(\Orb(p))}\cap\Lambda_g(\alpha,l)$, the unstable manifold $W^u(x)$ tangent to $E^{uu}\oplus E^u$ is contained in  $\overline{W^u(\Orb(p))}$. The latter is the support of $\mu$.
By combining this with the definition of a 
 $c$-$cu$-state, we can end the proof.

The proof of necessity: By Theorem~A, there exists some periodic point in the skeleton, which we denote by $p_\mu$, such that $\supp(\mu)=\overline{W^u(\Orb(p_\mu))}$. By Lemma~\ref{contained}, $\overline{W^u(\Orb(p_\mu))}\subset \overline{W^u(\Orb(O_j))}$ for some $j\in\{1,2,...,\ell\}$.
Since ${E^{uu}\oplus E^{u}}$ is uniformly expanding. Consequently,  $W^s(p_\mu)$ must transversely intersect $W^u(\Orb(O_j))$.
By the definition of strong-skeleton, $W^u(p_\mu)$ has to transversely intersect $W^s(\Orb(O_i))$ for some $i\in\{1,2,...,\ell\}$. By the inclination lemma, $W^u(\Orb(O_j))$ transversely intersects $W^s(\Orb(O_i))$. By the definition of strong-skeleton again, it follows that $i=j$. Applying the inclination lemma once again, we deduce that  $\overline{W^u(\Orb(p_\mu))}=\overline{W^u(\Orb(O_j))}$. Thus, $\supp(\mu)=\overline{W^u(\Orb(O_j))}$.
\end{proof}

\begin{lem}\label{more cu-state}
For each $T_i \times \T^{d_2}=\overline{W^u(\Orb((p_i,0),g))}$, there exist uncountably many  $c$-$cu$-states whose support is  exactly $T_i\times \T^{d_2}$.
\end{lem}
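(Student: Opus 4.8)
The plan is to realize the required measures as products $\mu_i\times\rho$, where $\mu_i$ is the ergodic $c$-Gibbs $u$-state of $f$ with $\supp(\mu_i)=T_i$ provided by Lemma~\ref{zhuyao}, and $\rho$ ranges over an uncountable family of $T$-invariant probability measures on $\T^{d_2}$. By Lemma~\ref{ifonlyh}, combined with~\ref{clos} and the fact that $O_i=(p_i,0)$ lies in the strong-skeleton $S$, it suffices to exhibit uncountably many \emph{ergodic} $c$-Gibbs $u$-states $\nu$ of $g$ with $\supp(\nu)=T_i\times\T^{d_2}=\overline{W^u(\Orb(O_i),g)}$ (the relevant unstable leaves, tangent to $E^{uu}\oplus E^u_T$, having the correct dimension $\dim(E^{uu}\oplus E^u)$).

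The first step is to check that $\mu_i\times\rho$ is a $c$-Gibbs $u$-state of $g$ for an arbitrary $T$-invariant $\rho$; it is plainly $g$-invariant. Recall from the construction that $g$ factors over $A$ via $\pi=\pi_1\circ\pi_{12}$, that $\mathcal{M}^g_j:=\pi^{-1}(\mathcal{R}_j)=\pi_1^{-1}(\mathcal{R}_j)\times\T^{d_2}=:\mathcal{M}^f_j\times\T^{d_2}$ is a $\pi^{-1}$-Markov partition of $g$, and that by~\ref{uu-home} the strong-unstable plaque of $g$ through $(y,x)$ is $\F^{uu}_j((y,x),g)=\F^{uu}_j(y,f)\times\{x\}$. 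On such a plaque $\pi_{12}$ is a homeomorphism onto $\F^{uu}_j(y,f)$, and $\pi_1$ is a homeomorphism of the latter onto $W^u_j(\pi_1(y))=W^u_j(\pi(y,x))$; consequently the unique reference measure of $g$ on this plaque is $\nu^{uu}_j(y,f)\times\delta_x$, where $\nu^{uu}_j(y,f)$ denotes the reference measure of $f$ on $\F^{uu}_j(y,f)$. On the other hand, for every $j$ with $\mu_i(\mathcal{M}^f_j)>0$ we have $(\mu_i\times\rho)|_{\mathcal{M}^g_j}=\mu_i|_{\mathcal{M}^f_j}\times\rho$, and a routine disintegration of this product along the refinement into the plaques $\F^{uu}_j(y,f)\times\{x\}$ — the factor $\rho$ being transverse to that partition — produces conditional measures $\nu^{uu}_j(y,f)\times\delta_x$, since $\mu_i$ is a $c$-Gibbs $u$-state of $f$. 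By uniqueness of the Rokhlin disintegration these coincide with the reference measures of $g$, so $\mu_i\times\rho\in Gibbs^u_c(g)$.

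The second step is to choose $\rho$ so that $\mu_i\times\rho$ is moreover ergodic and fully spread in the $\T^{d_2}$-direction. For this I would take $\rho$ to be simultaneously weakly mixing for $T$ and of full support, and exploit that such measures are abundant: $T$ is a transitive Anosov diffeomorphism of a torus, hence topologically mixing (being topologically conjugate to a hyperbolic automorphism), so by Bowen's theory each Hölder potential has a unique equilibrium state which is fully supported (Gibbs property) and Bernoulli, in particular weakly mixing. Fixing a Hölder function $\psi_0$ on $\T^{d_2}$ that is not cohomologous to a constant — for instance one whose Birkhoff averages differ on two periodic orbits — the equilibrium states $\rho_t$ of $t\psi_0$, $t\in\R$, are pairwise distinct (strict convexity of the pressure, i.e. non-cohomology of the potentials), giving an uncountable family of fully supported weakly mixing $T$-invariant measures. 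For each such $\rho=\rho_t$, the product of the weakly mixing system $(\T^{d_2},T,\rho_t)$ with the ergodic system $(N,f,\mu_i)$ is ergodic (the classical fact that a weakly mixing factor makes the product with any ergodic factor ergodic), so $\mu_i\times\rho_t$ is an ergodic $c$-Gibbs $u$-state of $g$ with $\supp(\mu_i\times\rho_t)=\supp(\mu_i)\times\supp(\rho_t)=T_i\times\T^{d_2}$. These measures are pairwise distinct since $(\pi_2)_*(\mu_i\times\rho_t)=\rho_t$, where $\pi_2(y,x)=x$; hence $\{\mu_i\times\rho_t:t\in\R\}$ is an uncountable family of measures, each of which is a $c$-$cu$-state of $g$ supported exactly on $T_i\times\T^{d_2}$ by Lemma~\ref{ifonlyh}. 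The one point demanding genuine care is the ergodicity of the product: for an arbitrary fully supported ergodic $T$-measure the product with $\mu_i$ need not be ergodic, and this is precisely why the argument has to be routed through weakly mixing measures and rely on their abundance among equilibrium states.
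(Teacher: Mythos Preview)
Your proof is correct and notably cleaner than the paper's, but it takes a genuinely different route to ergodicity and full support. The paper does \emph{not} restrict to weakly mixing $\rho$; instead it takes an arbitrary ergodic $T$-measure $\tilde\nu$ with full support, forms the product $\mu_i\times\tilde\nu$ (also shown to be a $c$-Gibbs $u$-state), and then passes to an ergodic component $\tilde\mu$ via item~\ref{d} of Proposition~\ref{main.pro}. The delicate part of the paper's argument is recovering the two lost properties for the component: first, it shows $(\pi_2)_*\tilde\mu=\tilde\nu$ by writing $\tilde\mu$ as a Birkhoff limit of a reference measure $\nu^{uu}_i(z)\times\delta_w$ with $w$ in the basin of $\tilde\nu$; second, it proves $\supp(\tilde\mu)=T_i\times\T^{d_2}$ via a separate density claim (iterates of a strong-unstable plaque in $T_i$ becoming $\varepsilon$-dense in $T_i$). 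Your approach sidesteps both of these by invoking the classical fact that weakly mixing $\times$ ergodic is ergodic, so the product itself is already an ergodic $c$-Gibbs $u$-state with the obvious product support $T_i\times\T^{d_2}$. The price you pay is the need to manufacture uncountably many fully supported \emph{weakly mixing} $T$-measures, which you handle correctly via equilibrium states for a one-parameter family of H\"older potentials on the transitive Anosov system $T$. The paper's approach is more elementary in its ergodic-theoretic prerequisites (no thermodynamic formalism, no weak mixing characterization) but requires more dynamical work inside the product; yours is shorter and more transparent once one accepts the input from Bowen's theory.
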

\begin{proof}
Consider an ergodic measure $\tilde{\nu}$ on system $(\T^{d_2}, T)$ such that $\supp(\tilde{\nu})=\T^{d_2}$. There exists a measurable set $\Gamma=B(\tilde{\nu})$ with $\tilde{\nu}(\Gamma)=1$, where $B(\tilde{\nu})$ is the basin of $\tilde{\nu}$ in system $(\T^{d_2}, T)$.

We construct a product measure $\mu_i\times \tilde{\nu}$ on $T_i\times\T^{d_2}$. 
Notice that $\mu_{i}\times \tilde{\nu}(T_i\times\Gamma)=1$ and that $\mu_{i}\times \tilde{\nu}$ is an invariant measure of $g$. 
By transitivity of the disintegration(see \cite[Exercise~5.2.1]{VO} or \cite{VARA}), it is easy to obtain that $\mu_{i}\times \tilde{\nu}$ is a $c$-Gibbs $u$-state. By ergodic decomposition theorem, almost every ergodic component of $\mu_{i}\times \tilde{\nu}$ has full measure on $T_i\times\Gamma$. 
By item~\ref{d} of Proposition~\ref{main.pro},  there exists an ergodic component $\tilde{\mu}$ such that $\tilde{\mu}$ is a ergodic $c$-Gibbs $u$-state and satisfies $\tilde{\mu}(T_i\times\Gamma)=1$.

The family of strong unstable plaques associated with points in $T_i\times\Gamma$  is  given by
$$
\{W^{uu}_j(x)\times\{y\},x\in T_i, y\in\Gamma, j\in\{1,2,...,k\}.\},
$$ where $k$ denotes the cardinality of the $\pi^{-1}$-Markov partition, and $W^{uu}_j(x)\times\{y\}$ is contained in corresponding $\mathcal{M}_j$.  Each strong unstable plaque of a point in $T_i\times\Gamma$  remains contained in $T_i\times\Gamma$. 

Since $\tilde{\mu}(T_i\times\Gamma)=1$, according to the definition of  a $c$-Gibbs $u$-state and  ergodicity of  $\tilde{\mu}$ (following a similar argument as in Lemma~\ref{xin}),  there exists a strong unstable plaque $W^{uu}_i(z)\times\{w\}$ with the reference measure $\nu^{uu}_i(z,w)$ such that
\begin{equation}\label{limitu}
\lim_{n\to+\infty}\frac{1}{n}\sum_{j=0}^{n-1}(f\times T)^j_*\nu^{uu}_i(z,w)=\tilde{\mu}, W^{uu}_i(z)\times\{w\}\subset \supp(\tilde{\mu})\cap (T_i\times\Gamma).
\end{equation}
 Note that $\nu^{uu}_i(z,w)=\nu^{uu}_i(z)\times\delta_w$, where $\nu^{uu}_i(z)$ is the reference measure on $W^{uu}_i(z)$ for the system $(M,f)$.

Consider the projection map:
\[
\pi_2: T_i\times\T^{d_2}\to \T^{d_2}, \pi_2(x,y)=y.
\]
We have the following commutative diagram:
\[
\begin{CD}
T_i\times\T^{d_2} @>f\times T>> T_i\times\T^{d_2}  \\
@VV\pi_2V     @VV\pi_2V\\
\T^{d_2}    @>T>>  \T^{d_2}
\end{CD}
\]
Applying $\pi_2$ to the limit in equation \eqref{limitu}, we get:
\[
{\pi_2}_*(\lim_{n\to+\infty}\frac{1}{n}\sum_{j=0}^{n-1}(f\times T)^j_*\nu^{uu}_i(z,w))={\pi_2}_*(\tilde{\mu})
\]
This simplifies to:
\begin{align*}
\begin{split}
{\pi_2}_*(\lim_{n\to+\infty}\frac{1}{n}\sum_{j=0}^{n-1}(f\times T)^j_*\nu^{uu}_i(z,w))&={\pi_2}_*(\lim_{n\to+\infty}\frac{1}{n}\sum_{j=0}^{n-1}(f\times T)^j_*(\nu^{uu}_i(z)\times\delta_w))\\
&=\lim_{n\to+\infty}\frac{1}{n}\sum_{j=0}^{n-1}{\pi_2}_*(f\times T)^j_*(\nu^{uu}_i(z)\times\delta_w)\\
&=\lim_{n\to+\infty}\frac{1}{n}\sum_{j=0}^{n-1}T^j_*{\pi_2}_*(\nu^{uu}_i(z)\times\delta_w)\\
&=\lim_{n\to+\infty}\frac{1}{n}\sum_{j=0}^{n-1}T^j_*\delta_w\\
&=\tilde{\nu}.(w\in B(\tilde{\nu},T))
\end{split}
\end{align*}

Thus, we have shown that $\pi_{2*}(\tilde{\mu}) = \tilde{\nu}$. Next, we establish that $\tilde{\mu}$ is indeed a $c$-$cu$-state. Before proceeding, we introduce the following claim. 

\begin{claim}\label{densev}
 For any $\varepsilon>0$, there exists an integer $N$ such that for all $n\ge N$ $f^n(W^{uu}_i(z))$ is $\varepsilon$-dense in $T_i$.
\end{claim}
\begin{proof}[Proof of Claim]
By the relation~\ref{limitu}, we 
observe that $\nu^{uu}_i(z)$ is the reference measure on the strong unstable plaque $W^{uu}_i(z)$ of the system $(M,f)$, where $W^{uu}_i(z)\subset T_i$. Since the invariant set $T_i$ supports a unique  ergodic $c$-Gibbs $u$-state, which is $\mu_i$, we can apply items~\ref{d} and \ref{f} of Proposition~\ref{main.pro} to obtain:
\[
\lim_{n\to+\infty}\frac{1}{n}\sum_{j=0}^{n-1}f^j_*(\nu^{uu}_i(z))=\mu_{i}.
\] 
Now,   consider a closed neighborhood $C$ of $p_i$ such that for any point $q\in C$, the strong unstable manifold $W^{uu}(q)$ transversely intersects  $W^s_{\loc}(p_i)$ at some point. Recall that 
$$
T_i=\overline{W^{uu}(\Orb(p_i),f)}=\supp(\mu_i).
$$
 Since $p_i\in\supp(\mu_i)$, we  conclude that:
\[\liminf_{n\to+\infty}\frac{1}{n}\sum_{j=0}^{n-1}f^j_*(\nu^{uu}_i(z))(C)\ge\mu_{i}(C)>0.\]
This inequality implies that for sufficiently large  $n$,  $f^n(W^{uu}_i(z))$ must transversely intersect $W^s_{\loc}(p_i)$ at some point.

By the compactness of $\overline{W^{uu}(\Orb(p_i))}$, the set $W^{uu}_R(\Orb(p_i))$ is $\varepsilon$-dense in $\overline{W^{uu}(\Orb(p_i))}$ for   sufficiently large $R$. Since $T_i$ is invariant under $f$, we have $f^n(W^{uu}_i(z))\subset\overline{W^{uu}(\Orb(p_i),f)}$ for all $n$.   Applying inclination lemma, we conclude that  $f^n(W^{uu}_i(z))$ is $\varepsilon$-dense in $\overline{W^{uu}(\Orb(p_i))}$(with respect to the subspace topology) for sufficiently large $n$.
\end{proof}

Next, we aim to prove that $\tilde{\mu}$ is a $c$-$cu$-state. To achieve this, it suffices to show that $\supp(\tilde{\mu})=T_i\times\T^{d_2}$, as established by Lemma~\ref{ifonlyh}.

For any point $(x,y)\in T_i\times\T^{d_2}$ and any closed neighborhood $U\times V$ of $(x,y)$, we have:  $${\pi_2}_*(\tilde{\mu})(V)=\tilde{\nu}(V)>0.$$ 
Since $\tilde{\mu}(T_i\times\T^{d_2})=1$, there exist  positive constants $\delta_1>\delta_2>0$ such that  $$\tilde{\mu}(T_i\times V)\ge\delta_1>\delta_2>0.$$ By relation \ref{limitu},  for sufficiently large $n$, we have:
\[
\frac{1}{n}\sum_{j=0}^{n-1}(f\times T)^j_*\nu^{uu}_i(z,w)(T_i\times V)=\frac{1}{n}\sum_{j=0}^{n-1}(f\times T)^j_*(\nu^{uu}_i(z)\times\delta_w)(T_i\times V)\ge\delta_2>0.
\]
Thus, there are infinitely many $j$ such that $(f\times T)^j(W^{uu}_i(z)\times\{w\})$ intersects $T_i\times V$.

By selecting $\varepsilon$ much smaller than the diameter of $U$ (where $\varepsilon$ as in above Claim~\ref{densev}), and then applying  Claim~\ref{densev}, we deduce that $(f\times T)^j(W^{uu}_i(z)\times\{w\})$ intersects $U\times V$ for some $j\ge N$.  By the invariance of the support and relation~\ref{limitu}, $(f\times T)^j(W^{uu}_i(z)\times\{w\})\subset\supp(\tilde{\mu})$.
Since $$(f\times T)^j(W^{uu}_i(z)\times\{w\})\cap (U\times V)\neq\emptyset$$ and given that $\supp(\tilde{\mu})$ is  closed, we can conclude that $\supp(\tilde{\mu})=T_i\times\T^{d_2}$.   Next, we establish that there are uncountably many $c$-$cu$-states, all with support equal to $T_i\times\T^{d_2}$.

It is well known that there are uncountably many ergodic measures of $T$ with support on $\T^{d_2}$. From our earlier discussion, $\tilde{\mu}$ is an ergodic component of  $\mu_{i}\times\tilde{\nu}$ that satisfies ${\pi_2}_*(\tilde{\mu})=\tilde{\nu}$ and  is a $c$-$cu$-state.  Since  different ergodic measures of $T$ with full support on $\T^{d_2}$ induce distinct $c$-$cu$-states, it follows that  there exist uncountable $c$-$cu$-state, all supported on  $T_i\times \T^{d_2}$.
\end{proof}

\begin{proof}[Proof of Proposition~\ref{pro.con}]
By Lemma~\ref{lem.factoroveranosov}, $g$ has a $c$-mostly expanding  and $c$-mostly contracting center, thereby establishing the first part of the proposition.

To prove the second part,   consider the set $S=\{O_i,O_i=(p_i,0), 1\le i\le\ell\}$, which  is a strong-skeleton. Notice that the closures of  the unstable manifolds passing through the orbit of distinct periodic points in the strong-skeleton are pairwise disjoint by Lemma~\ref{zhuyao}.
Using  Lemma~\ref{ifonlyh} and Lemma~\ref{more cu-state}, we establish the first two items of the second part of the proposition. The proof of Lemma~\ref{ifonlyh} also provides the necessary details to complete the third item.
 Since the strong-skeleton of the system $(g,M)$ can be constructed from the strong-skeleton of the system $(f,N)$, it follows that the fourth item of the second part holds ture.

	The property of having a $c$-mostly contracting is $C^1$-open among partially hyperbolic diffeomorphisms that factor over the same Anosov. This means there exists a $C^1$-neighborhood $\mathcal{U}_f$ of $f$ such that any $\tilde{f}\in\mathcal{U}_f$ factoring over the same Anosov as $f$ has a $c$-mostly contracting center along $E^{cs}_{\tilde{f}}$. 
Since transitive Anosov diffeomorphisms on the torus are structurally stable, there exists a  $C^1$-neighborhood $\mathcal{U}_T$ such that any $\tilde{T}\in \mathcal{U}_T$ remains a transitive Anosov diffeomorphism. 
The property of having partially hyperbolic splittings is also open among diffeomorphisms.
By appropriately shrinking the neighborhoods $\mathcal{U}_f$ and $\mathcal{U}_T$, we can guarantee that $\tilde{g}$ satisfies the same conditions  as $g$.
\end{proof}

\subsection{Examples of Our Results and the Applications of Proposition~\ref{pro.con}}\label{nonapp}

Dolgopyat, Viana, and Yang \cite{DVY} constructed a family of partially hyperbolic $C^2$-diffeomorphisms on the product manifold $\T^2 \times S^2$, where  $S^2$ denotes a sphere. These diffeomorphisms admit partially hyperbolic splittings of the form $$E^{uu}\oplus_{\succ}E^c\oplus_{\succ}E^{ss},$$ where $E^c=TS^2$ is an invariant subbundle of tangent bundle $T(\T^2\times S^2)$. (For convenience, we  will use notation“$E^{uu}\oplus_{\succ}E^c\oplus_{\succ}E^{ss}$” to denote the partially hyperbolic splittings associated with this family of diffeomorphisms).

  For any positive integer $n\in\mathbb{N}$, there exist partially hyperbolic  diffeomorphisms on $\T^2\times S^2$ with a strong-skeleton of cardinality  $n$, where each periodic point in the strong-skeleton is fixed point. This is achieved by  choosing $E^c\oplus E^{ss}$ as $E^{cs}$ in the definition of strong-skeleton. The stable manifold  of each fixed point in the strong-skeleton is dense in the entire space. More precisely, the unstable manifold of each fixed point in the strong-skeleton is tightly adjacent to the stable manifolds of all other fixed points in the strong-skeleton. 
Locally, to clarify this using local coordinates, the unstable manifold of each fixed point in strong-skeleton contains points of the form $(x,0,0,0)$, where $x\in(-a,a)$. The stable manifold of any other fixed point in strong-skeleton contains points of the form $(0,y,z,w)$, where $y\in(-a,a)$ and $z,w\in(-a,0)\cup(0,a)$.
This implies that  by composing with some locally supported $C^\infty$-perturbations on a neighborhood of the unstable manifold of a fixed point in skeleton, the cardinality of the strong-skeleton of the resulting partially hyperbolic diffeomorphism can be correspondingly reduced.
 Furthermore, it has been established in \cite{DVY} that different strong-skeletons are mutually equivalent. This result further implies that different strong-skeletons are mutually equivalent in partially hyperbolic splittings of the form $$E^{uu}\oplus_{\succ}E^{cu} \oplus_{\succ}E^{cs}$$ when $E^{cu}$ is uniformly expanding(see the definition of strong-skeletons).

\subsubsection{Some Examples}

For any positive integer $n$, it has been shown  in \cite{LiZhang} that we can choose an appropriate  partially hyperbolic diffeomorphism $\hat{f}:\T^2 \times S^2\to \T^2 \times S^2$ that factors over Anosov, such that the center-stable subbundle $E^c\oplus E^{ss}$ is $c$-mostly contracting and the cardinality of the strong-skeleton is $n$(the choices and constructions of the family of diffeomorphisms are compatible).

 Furthermore, we can carefully select a suitable transitive Anosov diffeomorphism $\hat{T}:\T^2\to\T^2$ with the hyperbolic splitting $E^{u}\oplus E^{s}$ such that the product map 
$$
\hat{f}\times\hat{T}:\T^2\times S^2\times\T^2\to \T^2\times S^2\times\T^2
$$ admits the the partially hyperbolic splitting: $$TM=E^{uu}\oplus_{\succ}E^{u}\oplus_{\succ}( E^c\oplus E^s \oplus E^{ss}),$$
where $M=\T^2\times S^2\times\T^2$.

 Thus, $\hat{f}\times\hat{T}$ satisfies the general assumption of Proposition~\ref{pro.con}. In particular, it has been shown in \cite{LiZhang} that there exists a  $C^1$-neighborhood $\U_{\hat{f}}$ such that any  diffeomorphism in $\U_{\hat{f}}$ factors over the same Anosov as $\hat{f}$ with a $c$-mostly contracting center.  

The example discussed above allows us to determine the precise number of $c$-$cu$-states with pairwise distinct (or disjoint) supports,  which corresponds to the cardinality of the strong-skeleton.

Ures, Viana, F. Yang and J. Yang proved in \cite{UVYY} that diffeomorphisms {\it derived from Anosov} factor over Anosov with $c$-mostly contracting centers (see \cite{UVYY} for more information).  This terminology originates from Smale \cite{Smale} in the study of 2-dimensional maps. Ma\~{n}\'{e}
 \cite{Man} was the first to investigate partially hyperbolic diffeomorphisms derived from Anosov. Ures, Viana, F. Yang and J. Yang also explained  partially volume expanding topological solenoids (coming from \cite{BLY,SMMJ}) factor over Anosov and have $c$-mostly contracting center, restricted to the maximal invariant set. 
Therefore, these diffeomorphisms all can be considered as the $f$ in Proposition~\ref{pro.con}.

We point out that if $T$ is a non-transitive Anosov diffeomorphism in Proposition~\ref{pro.con}, a similar argument shows that the corresponding $g$ factors over $A$ with a $c$-mixed center. However, determining the precise maximum number of  $c$-$cu$-states with pairwise distinct supports is rather cumbersome. Additionally, whether a non-transitive Anosov diffeomorphism exists remains an open question.

\subsubsection{Application of Proposition~\ref{pro.con}}

Recall that {\it Gibbs $u$-states} are invariant measures whose conditional measures  along the leaves of the strong-unstable foliation  (the unique foliation 
tangent to the strong unstable subbundle) are absolutely continuous with respect to the  corresponding Lebesgue measure. In the $C^{1+}$-setting, the existence of Gibbs $u$-states is guaranteed for partially hyperbolic systems. 
 Gibbs $u$-states in these systems are closely related to physical measures, and the existence of  physical measures often requires the partially hyperbolic diffeomorphism to be at least $C^{1+}$.
Thus, the partially hyperbolic systems discussed in this subsubsection are considered in the $C^{1+}$-setting. Next, we outline the applications of Proposition~\ref{pro.con}.

In  a partially hyperbolic  splitting $TM=E^{uu}\oplus_{\succ}E^{cs}$,  the subbundle $E^{cs}$ is said to be  {\it mostly contracting} if every Gibbs $u$-state has only negative Lyapunov exponents along $E^{cs}$. The focus of the research in \cite{BV,DVY} is on partially hyperbolic diffeomorphisms characterized by such splittings with contracting centers.  Dolgopyat, Viana and Yang obtained in \cite{DVY} that the number of physical measures coincides with the cardinality of any strong-skeleton.

For a more complex partially hyperbolic splitting of the form $E^{uu}\oplus_{\succ}E^{cu}\oplus_{\succ}E^{cs}$,  the subbundle $E^{cu}\oplus_{\succ}E^{cs}$ is  {\it mixed} if every Gibbs $u$-state has only negative Lyapunov exponents along $E^{cs}$ and  only positive Lyapunov exponents along $E^{cu}$. 
Partially hyperbolic diffeomorphisms characterized by   splittings of form
 $E^{uu}\oplus_{\succ}E^{cu}\oplus_{\succ}E^{cs}$ with mixed centers  are the central focus of the study in \cite{CM,ref7}. When $E^{cu}$ is uniformly expanding,  the results in \cite{CM} show that the number of physical measures also coincides with the cardinality of any strong-skeleton.

For any positive integer $n>1$, Dolgopyat, Viana and Yang choose an appropriate map $\hat{f}$ such that $E^c\oplus E^{ss}$ is mostly contracting and the cardinality of some strong-skeleton is $n$(with compatible choices). This choice of  $\hat{f}$ allows the system to have $n$  physical measures,  with  a one-to-one corresponding between fixed points in   the strong-skeleton of $\hat{f}$ and physical measures.  

 By  selecting a suitable transitive $C^{1+}$-Anosov map $\hat{T}$ such that
$$
TM=E^{uu}\oplus_{\succ}E^{u}\oplus_{\succ} (E^c\oplus E^s \oplus E^{ss}),
$$
it can be ensured that the subbundle $E^c\oplus E^s \oplus E^{ss}$ of $TM$ is mostly contracting in the product system $(\hat{f}\times\hat{T}, \T^2\times S^2\times\T^2)$.  Consequently, by Remark~\ref{mostlycontra},  $\hat{f}\times\hat{T}$ satisfies  general assumptions outlined in \cite{CM} by choosing  the subbundle $E^u$ of $TM$ to be $E^{cu}$.   In addition,  the cardinality of strong-skeleton of $\hat{f}\times\hat{T}$ is  equal to the cardinality of strong-skeleton of $\hat{f}$, as established in the proof of Proposition~\ref{pro.con} (see Remark~\ref{mostlycontra}).
Furthermore, based on the results of \cite{CM},    the number of  physical measures of $\hat{f}\times\hat{T}$ coincides with  the cardinality of strong-skeleton of system $(\hat{f}\times\hat{T}, M)$, which is $n$.

The properties of having a mostly contracting center and a mixed center are $C^1$-open among $C^{1+}$-diffeomorphisms.  Such properties actually guarantee that the number of physical measures coincides with the cardinality of any strong-skeleton in various partially hyperbolic splitting, including $E^{uu}\oplus_{\succ} E^{cs}$ and $E^{uu}\oplus_{\succ}E^{cu}\oplus_{\succ} E^{cs}$.

 Moreover, Dolgopyat, Viana and Yang explained how the physical measures of $\hat{f}$ collapse under small perturbations of the diffeomorphism.  
Specifically, for any $1\le m<n$,    a locally supported $C^{\infty}$-perturbation made on the neighborhood of the unstable manifold of a fixed point in the strong-skeleton of $\hat{f}$ transforms the original map $\hat{f}$  into $\hat{f}_\varepsilon$. This perturbation causes the cardinality of the strong-skeleton of $\hat{f}_\varepsilon$ to be $m$.
 Consequently, by Remark~\ref{mostlycontra}  (an alternative version of Proposition~\ref{pro.con}),  the number of physical measures associated with the perturbed system $\hat{f}_\varepsilon\times\hat{T}$ is also $m$. This provides a clear example of the upper semi-continuous variation in the number of physical measures among partially hyperbolic diffeomorphisms with   splittings of the form “$E^{uu}\oplus_{\succ}E^{cu}\oplus_{\succ}E^{cs}$” and mixed centers, as established in the theoretical results of \cite{CM}.  In addition, when \( E^{cu} \) is not uniformly expanding and \( E^{cs} \) is not uniformly contracting, we can also construct specific examples to demonstrate the variation of physical measures, in accordance with the premises and results of the 'mixed' hypothesis in reference \cite{CM}.

\section{The Connection Between  Linear Anosov Skew-Products and a $c$-Mixed Center}

 In this section,  we analyze the possibility of  constructing partially hyperbolic diffeomorphisms with  a $c$-mixed center from the perspective of  skew-product structures. The details are as follows.

Let $M$ be a compact smooth Riemannian manifold.
Let $f$ be a diffeomorphism on the  manifold $M$.  Let \( G_1 \) and \( G_2 \) be two continuous subbundles whose intersection is trivial, i.e., it contains only the zero vector. For simplicity, we will omit the base points of tangent vectors in what follows.
For any $\varepsilon>0$,  define a cone
 $$\C_{\varepsilon}(G_2, G_1)=\{v_1+v_2\in G_1\oplus G_2:\|v_1\|\le\varepsilon\|v_{2}\|\},$$
We call $\C_{\varepsilon}(G_2,G_1)$  {\it cone field of width $\alpha$ around $G_2$}. The cone $\C_{\varepsilon}(G_2, G_1)$ is {\it $Df$-forward invariant}
if there exists a constant $\theta\in(0,1)$ such that
\begin{itemize}
\item   $Df(\C_{\varepsilon}(G_2, G_1))\subset\C_{\theta\varepsilon}(G_2, G_1)$;
\item $Df(G_1\oplus G_2)=G_1\oplus G_2$.
\end{itemize}
It is well-known (see \cite{Yoccoz}) that if $\C_{\varepsilon}(G_2, G_1)$ is  $Df$-forward invariant,
then there exist subbundles $\hat{G_2}, \hat{G_1}$  such that $$\hat{G_2}\oplus_\succ \hat{G_1}, \hat{G_1}\subset G_1\oplus G_2, \hat{G_2}\subset G_1\oplus G_2 \quad \text{and} \quad \hat{G_2}\subset\C_{\varepsilon}(G_2, G_1)$$ Furthermore, $\C_{\varepsilon}(G_2, G_1)$ is {\it $Df$-unstable} if it is $Df$-forward invariant  and there exist $C$ and $\gamma>1$ such that for any $n$, $v\in \C_{\varepsilon}(G_2, G_1)$,
$$
\|Df^n(v)\|\ge C\gamma^n\|v\|,
$$
then $\hat{G_2}$ is uniformly expanding.

(The advantage of employing cones to determine whether a domination exists is that it suffices to verify that
\[
Df\bigl(\mathcal{C}_\varepsilon(G_2, G_1)\bigr) \subset \mathcal{C}_{\theta\varepsilon}(G_2, G_1).)
\]

Let $A:\T^2\to\T^2$ be a hyperbolic linear automorphism with the maximum eigenvalue $\lambda^u$. The automorphism $A$ admits a hyperbolic splitting $T\T^2=E^u_A\oplus E^s_A$.
In this section, we consider a $C^{1+}$-partially hyperbolic
skew-product diffeomorphism $f:\T^2\times S^1\to \T^2\times S^1$ defined by $f(x,y)=(A(x),K_x(y))$ with the following properties:
\begin{itemize}
\item The tangent bundle $TM$ admits a continuous splitting $TM=E^u_A\oplus E^c\oplus E^s_A$, where $E^c=TS^1$.

\item There exists a constant $\alpha>0$ such  that $\C_\alpha(E^u_A,E^c)$, $\C_\alpha(E^s_A, E^c)$ are $Df$-unstable, $Df^{-1}$-unstable, respectively.  (In fact, the $Df$-positive-invariance of $\C_\alpha(E^u_A,E^c)$ implies that $\C_\alpha(E^u_A,E^c)$ is $Df$-unstable, as shown in the proof of Lemma~\ref{ifonly}.)
\end{itemize}
A skew-product  with these properties above, where the action on the base space (or the first space in the product space) is a hyperbolic linear automorphism,  is called a {\it  linear Anosov skew-product}.  (This definition can be generalized to $\T^d\times N$, where $N$ is a compact smooth Riemannian manifold.)
It is well known that $f$ admits a partially hyperbolic splitting $E^u\oplus_{\succ} E^c\oplus_{\succ} E^s$ such that 
\begin{equation}\label{subbundle}
E^u\subset\C_\alpha(E^u_A,E^c), E^s\subset\C_\alpha(E^s_A, E^c).
\end{equation} 

\begin{lem}\label{ifonly}
For any invariant measure $\mu$, $\mu$ is a Gibbs $u$-state of $f$ if and only if $\mu$ is a $c$-Gibbs $u$-state of $f$.
\end{lem}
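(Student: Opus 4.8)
The plan is to exhibit, for this skew-product, a single explicit density $\phi$ on strong-unstable leaves so that the conditional measures of a Gibbs $u$-state and the reference measures that define a $c$-Gibbs $u$-state are \emph{both} proportional to $\phi$ on every strong-unstable plaque; the equivalence then drops out. First I would record that $f$ factors over $A$ through the coordinate projection $\pi:\T^{2}\times S^{1}\to\T^{2}$, $\pi(x,y)=x$. The semiconjugacy $\pi\circ f=A\circ\pi$ is immediate; $f$ is dynamically coherent with $\F^{c}(f)=\pi^{-1}(W^{s}(A))$, which is tangent to $E^{c}\oplus E^{s}$ because, by \eqref{subbundle}, $E^{s}\subset\C_\alpha(E^{s}_A,E^{c})$ forces $E^{c}\oplus E^{s}=E^{c}\oplus E^{s}_A=\ker\bigl(D\pi\to E^{u}_A\bigr)$; and, since $E^{u}\subset\C_\alpha(E^{u}_A,E^{c})$, the restriction $D\pi_{z}|_{E^{u}_{z}}\colon E^{u}_{z}\to E^{u}_A(\pi z)$ is an isomorphism with $\tfrac{1}{\sqrt{1+\alpha^{2}}}\le\|D\pi_{z}|_{E^{u}_{z}}\|\le 1$, so $\pi$ carries each leaf of $\F^{uu}(f)$ $C^{1}$-diffeomorphically onto a leaf of $W^{u}(A)$ and $\pi(\F^{c}(z,f))=W^{s}(\pi z,A)$. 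Fix a $\pi^{-1}$-Markov partition $\mathcal{M}=\{\mathcal{M}_i\}$.

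Set $\phi(z)=\|D\pi_{z}|_{E^{u}_{z}}\|$; it is continuous and bounded away from $0$ and $\infty$. Since $\pi$ rescales the intrinsic length along each strong-unstable leaf by the factor $\phi$, and $\vol^{u}_{i,\pi x}$ is the normalized Lebesgue measure on $W^{u}_{i}(\pi x)$, the (unique) reference measure on $\F^{uu}_{i}(x)$ is
\[
\nu^{uu}_{i,x}=\Bigl(\int_{\F^{uu}_{i}(x)}\phi\,dm^{uu}\Bigr)^{-1}\phi\cdot m^{uu}\big|_{\F^{uu}_{i}(x)},
\]
where $m^{uu}$ is leaf-Lebesgue; in particular $\nu^{uu}_{i,x}$ is equivalent to $m^{uu}$ with uniformly bounded density. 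This already gives one direction: if $\mu$ is a $c$-Gibbs $u$-state, then for each $i$ the Rokhlin disintegration of $\mu|_{\mathcal{M}_i}$ along $\F^{uu}_i$ equals $\nu^{uu}_{i,x}\ll m^{uu}$, so the conditional measures of $\mu$ along $\F^{uu}(f)$ are absolutely continuous with respect to leaf-Lebesgue, i.e.\ $\mu$ is a Gibbs $u$-state.

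For the converse I would invoke the standard description of the unstable densities of Gibbs $u$-states of a $C^{1+}$ diffeomorphism (see e.g.\ \cite{BDV}): for a Gibbs $u$-state $\mu$ and $\mu$-a.e.\ $x$, the conditional measure along $\F^{uu}(x,f)$ has density with respect to $m^{uu}$ proportional to $\rho_{x}(y)=\prod_{k\ge 1}\Delta(f^{-k}x)/\Delta(f^{-k}y)$, with $\Delta(z)=\bigl|\det Df_{z}|_{E^{u}_{z}}\bigr|$. Differentiating $\pi\circ f=A\circ\pi$ along $E^{u}$ and using that $A$ is linear, so $\bigl|\det DA|_{E^{u}_A}\bigr|\equiv\lambda^{u}$, yields $\Delta(z)=\lambda^{u}\,\phi(z)/\phi(fz)$; substituting this, the $\lambda^{u}$-factors cancel and the product telescopes to
\[
\rho_{x}(y)=\frac{\phi(y)}{\phi(x)}\cdot\lim_{k\to\infty}\frac{\phi(f^{-k}x)}{\phi(f^{-k}y)} .
\]
Since $x$ and $y$ lie on a common strong-unstable leaf, $d(f^{-k}x,f^{-k}y)\to 0$, and $\phi$ is uniformly continuous, so the limit equals $1$ and $\rho_{x}(y)\propto\phi(y)$. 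Comparing with the displayed formula for $\nu^{uu}_{i,x}$, the two normalized probability measures on each plaque $\F^{uu}_{i}(x)$ coincide; hence the disintegration of $\mu|_{\mathcal{M}_i}$ along $\F^{uu}_i$ is the reference measure and $\mu$ is a $c$-Gibbs $u$-state.

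The routine points — uniqueness of the reference measure, measurability of the disintegrations, and the fact that ``absolutely continuous along $\F^{uu}$'' is a local property unaffected by restricting to $\mathcal{M}_i$ — I would dispatch quickly. The single genuinely non-trivial input is the SRB-type density formula for Gibbs $u$-states, which I would quote rather than reprove; granting it, the proof reduces to the identity $\Delta=\lambda^{u}\,\phi/(\phi\circ f)$ together with the telescoping-and-continuity step, and that is the part I expect to write out most carefully.
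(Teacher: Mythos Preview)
Your argument is correct but takes a genuinely different route from the paper's. The paper proves the lemma via entropy: it quotes the characterisation (from \cite{Yan}) that $\mu$ is a Gibbs $u$-state iff $h^{u}_{\mu}(f)=\int\log|\det Df|_{E^{u}}|\,d\mu$, and the characterisation (from \cite{UVYY}) that $\mu$ is a $c$-Gibbs $u$-state iff $h^{u}_{\mu}(f)=h_{\top}(A)=\log\lambda^{u}$; it then uses the cone inclusion $E^{u}\subset\C_{\alpha}(E^{u}_{A},E^{c})$ to show that $\lim_{n}\tfrac{1}{n}\log|\det Df^{n}|_{E^{u}(x)}|=\log\lambda^{u}$ at every point, so by Birkhoff the right-hand sides of the two characterisations coincide for every invariant $\mu$. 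Your proof instead compares the two disintegrations directly: you identify a single density $\phi=\|D\pi|_{E^{u}}\|$ on strong-unstable plaques, observe that the reference measures $\nu^{uu}_{i,x}$ are $\phi\cdot m^{uu}$ by construction, and then use the SRB density formula together with the cocycle identity $\Delta=\lambda^{u}\,\phi/(\phi\circ f)$ (this identity is, incidentally, the infinitesimal version of the paper's asymptotic computation) to show that Gibbs $u$-state conditionals are also $\phi\cdot m^{uu}$. The paper's approach is shorter and more robust to higher-dimensional bundles, since it never touches the explicit form of the conditionals; your approach is more informative because it actually pins down the common density, and in particular makes the easy direction ($c$-Gibbs $\Rightarrow$ Gibbs) completely transparent without any entropy machinery. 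Both rest on one nontrivial external input: the paper on Yang's entropy characterisation of Gibbs $u$-states, you on the classical SRB density formula.
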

\begin{proof}
	For any $C^{1+}$-partially hyperbolic diffeomorphism with a partially hyperbolic splitting $E^{u}\oplus_{\succ}E^c\oplus_{\succ}E^{s}$, it has been established (see \cite{Yan}) that an invariant measure $\nu_1$ is a Gibbs $u$-state if and only if 
\begin{equation}\label{u-state}
h_{\nu_1}^u(f)=\int\log|\det(Df|_{E^{u}(x)})|d\nu_1.
\end{equation}

	For any invariant measure $\mu$, applying the Birkhoff ergodic theorem gives: 
	\begin{equation}\label{equ2}
	\int\log|\det(Df|_{E^{u}(x)})|d\mu=\int\lim_{n\to+\infty}\frac{\sum_{0\le i\le n-1}\log|\det Df|_{E^{u}(f^i(x))}|}{n}d\mu=\int\lim_{n\to+\infty}\frac{\log|\det Df^n|_{E^{u}(x)}|}{n}d\mu.
   \end{equation}
	
	Since $\C_\alpha(E^u_A,E^c)$ is $Df$-positive-invariant and $E^{u}\subset\C_\alpha(E^u_A,E^c)$ (by \ref{subbundle}),
	we have for  any $v\in\C_\alpha(E^u_A,E^c)$ (which can be expressed as  $v^u+v^c$ with  $v^{\sigma}\in E^\sigma$, $\sigma=u,c$): 
	\[\|A^n(v^u)\|\le\|Df^n(v)\|\le\sqrt{\alpha^2+1}\|A^n(v^u)\|,\]
	which implies
	\[\frac{1}{\sqrt{\alpha^2+1}}\cdot\frac{\|A^n(v^u)\|}{\|v^u\|}\le\frac{\|Df^n(v)\|}{\|v\|}\le\sqrt{\alpha^2+1}\cdot\frac{\|A^n(v^u)\|}{\|v^u\|}.\]
	Since $\dim(E^{u})=1$, it follows that
		\[\frac{1}{\sqrt{\alpha^2+1}}\cdot|\det(A^n|_{E^{u}_A})|\le|\det(Df^n|_{E^{u}})|\le\sqrt{\alpha^2+1}\cdot|\det(A^n|_{E^{u}_A})|.\]
	Consequently, we have:
	\begin{equation}\label{equ3}
		\lim_{n\to+\infty}\frac{\log|\det Df^n|_{E^{u}(x)}|}{n}=\log\lambda^u.
	\end{equation}
Thus,
\begin{equation}\label{inva}
\int\log|\det(Df|_{E^{u}(x)})|d\mu=\log\lambda^u.
\end{equation}

An invariant measure $\nu_2$ is a $c$-Gibbs $u$-state if and only if 
\begin{equation}\label{c-u-state}
h_{\nu_2}^u(f)=h_{top}(A)=\log\lambda^u.
\end{equation}

	By combining equations \eqref{u-state}, \eqref{c-u-state}, and \eqref{inva}, we conclude the proof of the lemma.
\end{proof}

\subsection{$c$-“Mostly” Expanding Subbundle}

Recall that $E^c$ is {\it mostly expanding} if every Gibbs $u$-state has only positive  Lyapunov exponents along $E^{c}$.

\begin{lem}\label{volum}
There exists a Gibbs $u$-state of $f$  such that the Lyapunov exponents along the  subbundle  $E^c$ are non-positive on a set of positive measure.
\end{lem}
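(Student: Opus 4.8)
The plan is to produce a single Gibbs $u$-state $\mu$ of $f$ for which the (one‑dimensional) center Lyapunov exponent $\lambda^c(z):=\lim_{n\to+\infty}\frac1n\log\|Df^n|_{E^c(z)}\|$ satisfies $\int\lambda^c\,d\mu\le 0$; this already suffices, since if instead $\lambda^c>0$ held $\mu$‑a.e.\ we would have $\int\lambda^c\,d\mu>0$, so $\int\lambda^c\,d\mu\le 0$ forces $\mu(\{z:\lambda^c(z)\le 0\})>0$, which is the assertion. Because $f(x,y)=(Ax,K_xy)$ is a skew‑product and $E^c=TS^1$ is $Df$‑invariant and one‑dimensional, writing $K^{(n)}_x:=K_{A^{n-1}x}\circ\cdots\circ K_x$ for the fiber diffeomorphism of $f^n$ over $x$ (so $f^n(x,y)=(A^nx,K^{(n)}_x(y))$), one has $\|Df^n|_{E^c(x,y)}\|=|(K^{(n)}_x)'(y)|$ together with the cocycle identity $\log\|Df^n|_{E^c(z)}\|=\sum_{j=0}^{n-1}(\log\|Df|_{E^c}\|)(f^j z)$. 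Hence $\lambda^c$ is a Birkhoff average of the continuous function $z\mapsto\log\|Df|_{E^c(z)}\|$, so $\int\lambda^c\,d\mu=\int\log\|Df|_{E^c}\|\,d\mu$ for every invariant $\mu$, and the whole matter reduces to exhibiting a Gibbs $u$-state $\mu$ with $\int\log\|Df|_{E^c}\|\,d\mu\le 0$.

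The main input is a fiberwise Jensen inequality, after which I would run the standard Gibbs $u$-state construction starting from the volume. For every $x\in\T^2$ and $n\ge 1$, $K^{(n)}_x$ is a $C^1$-diffeomorphism of $S^1$, so $\int_{S^1}\|Df^n|_{E^c(x,y)}\|\,dy=\int_{S^1}|(K^{(n)}_x)'(y)|\,dy=1$ (normalized Lebesgue on $S^1$), and strict concavity of $\log$ gives $\int_{S^1}\log\|Df^n|_{E^c(x,y)}\|\,dy\le 0$ for all $n$ and all $x$. Now put $m_0=\mathrm{Leb}_{\T^2}\otimes\mathrm{Leb}_{S^1}$ (normalized), which is equivalent to the Riemannian volume, and set $\mu_n=\frac1n\sum_{j=0}^{n-1}f_*^j m_0$. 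Since $f$ is $C^{1+}$, the strong‑unstable foliation $\F^{uu}(f)$ is absolutely continuous, hence the conditional measures of $m_0$ along $\F^{uu}(f)$ are equivalent to the Lebesgue measures on the leaves, and by the classical construction of Gibbs $u$-states every weak$^*$ accumulation point $\mu$ of $(\mu_n)$ is a Gibbs $u$-state of $f$ (equivalently, by Lemma~\ref{ifonly}, a $c$-Gibbs $u$-state). On the other hand, by the cocycle identity, Fubini, and the fiberwise bound,
\[
\int\log\|Df|_{E^c}\|\,d\mu_n=\frac1n\int_{\T^2\times S^1}\log\|Df^n|_{E^c(z)}\|\,dm_0(z)=\frac1n\int_{\T^2}\Big(\int_{S^1}\log\|Df^n|_{E^c(x,y)}\|\,dy\Big)dx\le 0
\]
for every $n$. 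As $z\mapsto\log\|Df|_{E^c(z)}\|$ is continuous, letting $\mu_{n_k}\to\mu$ yields $\int\log\|Df|_{E^c}\|\,d\mu\le 0$, hence $\int\lambda^c\,d\mu\le 0$, and the reduction in the first paragraph finishes the argument. If one wants an ergodic witness, passing to an ergodic component (item~\ref{d} of Proposition~\ref{main.pro}) gives an ergodic Gibbs $u$-state with $\lambda^c\le 0$ almost everywhere.

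The one step that is not a formal manipulation will be the claim that the Cesàro averages of the volume accumulate on Gibbs $u$-states; this is where the $C^{1+}$ hypothesis is used, via absolute continuity of $\F^{uu}(f)$. Everything else is Jensen's inequality for the circle diffeomorphisms $K^{(n)}_x$ plus Fubini. If one prefers to stay strictly inside the framework of the excerpt, one can instead note that the skew‑product $f$ factors over $A$ through $\pi_1(x,y)=x$, so Proposition~\ref{main.pro}(item~\ref{f}) applies; and because $A$ is \emph{linear} it has no distortion along $W^u(A)$, so each $f^j_* m_0$ retains reference measures as its conditionals along $\F^{uu}(f)$, and the same averaging computation shows that accumulation points of $(\mu_n)$ are $c$-Gibbs $u$-states, leaving the rest of the proof unchanged.
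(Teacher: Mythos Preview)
Your proof is correct and takes a genuinely different route from the paper's. The paper argues by contradiction: if every Gibbs $u$-state had positive center exponent, then $E^c$ would be mostly expanding, so by \cite[Theorem~C]{AV} there is a physical measure $\mu$ which is an ergodic Gibbs $u$-state; by \cite[Theorem~3.1]{SMMJ} any physical measure satisfies $\int\log|\det Df|\,d\mu\le 0$, whereas the cone estimates force the exponents along $E^u$ and $E^s$ to be $\log\lambda^u$ and $-\log\lambda^u$, giving $\int\log|\det Df|\,d\mu=\lambda^c(\mu)>0$, a contradiction. Your argument instead constructs the witness directly: the identity $\int_{S^1}|(K^{(n)}_x)'|\,dy=1$ plus Jensen gives $\int_{S^1}\log\|Df^n|_{E^c}\|\,dy\le 0$ for every $x$, so the Ces\`aro averages of the product Lebesgue measure satisfy $\int\log\|Df|_{E^c}\|\,d\mu_n\le 0$, and any weak$^*$ limit is a Gibbs $u$-state with nonpositive integrated center exponent. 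Your approach is more elementary and self-contained---it avoids invoking the existence theory for physical measures and the volume non-expansion criterion from \cite{SMMJ}---while the paper's route highlights the link between the result and those external facts. The one place in your proof that needs outside input is the claim that accumulation points of $\frac{1}{n}\sum_{j<n}f^j_*m_0$ are Gibbs $u$-states; this is indeed classical in the $C^{1+}$ setting via absolute continuity and bounded distortion (e.g.\ \cite{BDV}), so there is no gap. The final alternative you mention, routing through Proposition~\ref{main.pro}(\ref{f}), would require an extra step of disintegrating $m_0$ along strong unstable plaques, but it is not needed for the main line of your argument.
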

\begin{proof}
Consider any $v\in\C_\alpha(E^u_A,E^c)$, which can be expressed as $v=v^u+v^c$  where $v^{\sigma}\in E^\sigma$ for $\sigma=u,c$. From this, by $Df$-invariance of $\C_\alpha(E^u_A,E^c)$ and $Df(E^c)=E^c$, we obtain:
	\begin{equation}\label{lya}
		\|A^n(v^u)\|\le\|Df^n(v)\|\le\sqrt{\alpha^2+1}\|A^n(v^u)\|.
	\end{equation}
Consequently, for 
	 any $v^{u}\in E^u$ and any $v^{s}\in E^s$, we can deduce:
	\[\lim_{n\to+\infty}\frac{\log\|Df^nv^{u}\|}{n}=\log\lambda^u, \lim_{n\to+\infty}\frac{\log\|Df^{-n}v^{s}\|}{n}=\log\lambda^u.\]
	
	Assuming the contrary, we can invoke   \cite[Theorem~C]{AV}, which states that there exists some physical measure, denoted by $\mu$. Notice that $\mu$ is also  an ergodic Gibbs $u$-state. By   \cite[Theorem~3.1]{SMMJ},  the physical measure $\mu$ must be volume non-expanding in the sense that
	\begin{equation}\label{contr1}
		\int \log|\det Df|d\mu\le 0.
	\end{equation}
	On the other hand, applying the Oseledets  multiplicative ergodic theorem(see \cite{Viana}), we obtain:
	\begin{equation}\label{contr2}
		\int \log|\det Df|d\mu=\log\lambda^u-\log\lambda^u+\log\lambda^c=\log\lambda^c>0,
	\end{equation}
	where $\log\lambda^c$ is the Lyanunov exponent along $E^c$ associated with the ergodic Gibbs $u$-state $\mu$ (by our hypothesis, $\log\lambda^c(\mu)>0$ for any ergodic Gibbs $u$-state $\mu$). 

It is obvious that inequality~\ref{contr2} contradicts inequality~\ref{contr1}. Therefore,  $E^c$ is not mostly expanding.
\end{proof}

\begin{lem}\label{mostlyexpand}
There exists a $c$-Gibbs $u$-state of $f$  such that the Lyapunov exponents along the  subbundle  $E^c$ are non-positive on a set of positive measure.
\end{lem}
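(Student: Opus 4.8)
The plan is to read off this lemma directly from the two results that immediately precede it. By Lemma~\ref{volum} there is a Gibbs $u$-state $\mu$ of $f$ together with a Borel set $B$ with $\mu(B)>0$ such that the Lyapunov exponent of $\mu$ along $E^c$ is non-positive at each point of $B$; recall that $\dim E^c=\dim TS^1=1$, so there is a single exponent along $E^c$. The only thing left is to observe that $\mu$ is also a $c$-Gibbs $u$-state of $f$, and this is exactly the equivalence proved in Lemma~\ref{ifonly}. Hence the same measure $\mu$ and the same set $B$ witness the assertion of the present lemma.

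Concretely, I would first invoke Lemma~\ref{volum} to produce such a $\mu$ with $\mu\bigl(\{x:\lim_{n\to+\infty}\frac{1}{n}\log\|Df^n|_{E^c(x)}\|\le 0\}\bigr)>0$, then apply Lemma~\ref{ifonly} to conclude that $\mu$ belongs to the set of $c$-Gibbs $u$-states of $f$, and finally note that the defining property (a non-positive $E^c$-exponent on a set of positive measure) is a property of the measure alone and is therefore unaffected by this change of terminology.

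There is no genuine obstacle here: the substantive work has already been carried out. The cone estimate forcing $\int\log|\det(Df|_{E^u(x)})|\,d\mu=\log\lambda^u$, the appeal to \cite[Theorem~C]{AV} and \cite[Theorem~3.1]{SMMJ} together with the Oseledets formula in the proof of Lemma~\ref{volum}, and the entropy-formula characterizations of the two notions of Gibbs $u$-state used in Lemma~\ref{ifonly} are all in place. The present statement is merely the bookkeeping step translating the conclusion of Lemma~\ref{volum} into the language of $c$-Gibbs $u$-states, which is legitimate precisely because Lemma~\ref{ifonly} identifies the two classes of invariant measures.
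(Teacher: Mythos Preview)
Your proposal is correct and matches the paper's own proof essentially verbatim: the paper simply says the conclusion follows directly by combining Lemma~\ref{ifonly} and Lemma~\ref{volum}, which is exactly what you do.
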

\begin{proof}
By combining the results of Lemma~\ref{ifonly} and Lemma~\ref{volum}, we can directly derive this conclusion. \end{proof}

\subsection{$c$-“Mostly” Contracting Subbundle}

 In the above argument about “mostly” expanding subbundle, we have  shown that the subbundle $E^c$ is quite resistant to being $c$-mostly expanding.   Now, we turn our attention to the partially hyperbolic skew products (mainly linear Anosov skew-products), where we encounter  challenges in demonstrating that the corresponding subbundle is $c$-mostly contracting, particularly when selecting different  subbundles as the strong unstable bundle.

Consider a hyperbolic linear automorphism $B:\T^3\to\T^3$ characterized by the splitting of the tangent bundle $T\T^3= E^{uu}_B \oplus_{\succ} E^{u}_B \oplus_{\succ} E^{ss}_B$. The eigenvalues of $B$ are $\lambda^u_1, \lambda^u_2$, and $\lambda^s$ satisfying $\lambda^u_1 > \lambda^u_2 > 1 > \lambda^s > 0$.

By choosing $E^{uu}_B$ as the strong unstable subbundle in the definition of measure of maximal $u$-entopy, a measure $\mu$ is  of maximal $u$-entopy if and only if 
		\[h_\mu^u(B)=h^u_{top}(B)=\log\lambda^u_1=\int\log|\det(B|_{E^{uu}_B(x)})|d\mu.\]
This establishes that the measures of maximal $u$-entopy are equivalent to Gibbs $u$-states, where the space of Gibbs $u$-states is generated by choosing $E^{uu}_B$ as the strong unstable subbundle.  In this setting,  the system $(\T^3,B)$ may not factor over a hyperbolic linear  automorphism on a 2-dimensional torus.  

Consider a  $C^{1+}$-diffeomorphism $g:\T^3\times S^1\to \T^3\times S^1$ defined by $g(x,y)=(B(x),H_x(y))$, which is a linear Anosov skew-product.  The previous result in Lemma~\ref{ifonly} can be generalized to $g$, showing that  the measures of maximal $u$-entopy of $g$ are equivalent to Gibbs $u$-states of $g$. (We use Gibbs $u$-states as an intermediate equivalent due to the availability of numerous results related to Gibbs $u$-states in non-uniformly hyperbolic settings.)
Similar arguments regarding volume non-expansion (see the proof of Lemma~\ref{volum}) suggest that  $g$ can only be expected to exhibit a mostly  contracting  behavior  along $TS^1$, meaning every Gibbs $u$-state has only negative Lyapunov exponents along $TS^1$. 

Assuming that there exists an invariant torus $\T^3\times\{z\}$, we can  choose a proper $H_x$ to satisfy  the following inequality  for the Lebesgue measure on $\T^3 \times \{z\}$:
\begin{equation}\label{mu}
	\int _{\T^3\times\{z\}} \log\|DH_x(z)\|d\mu<0.
\end{equation} 

However, in  the simpler system $(\T^3,B)$, where the subbundle $E^{uu}_B$ corresponds to the strong unstable bundle in the general definition of measure of maximal $u$-entopy, 
we currently have no evidence to suggest that only the Lebesgue measure on $\T^3$ is the measure of maximal $u$-entopy (or Gibbs $u$-state).
Thus, verifying whether the  condition~\ref{mu} holds for all measures of maximal $u$-entopy in the system $(\T^3\times S^1,g)$ poses  additional challenges,  particularly regarding whether every Gibbs 
$u$-state exhibits only negative Lyapunov exponents along $TS^1$ (Note that the chosen strong unstable subbundle $E_g^{uu}$ is one-dimensional and satisfies $D\pi(E_g^{uu})=E^{uu}_B$, where $\pi$ is the projection map from $\T^3\times S^1$ to $\T^3$).
 At this point, we lack a feasible approach to prove corresponding Lemma~\ref{contained} and Lemma~\ref{lem.factoroveranosov}, mainly due to the difficulty in observing the strong-unstable disks or plaques. In comparison, the main reason behind the results in Proposition~\ref{pro.con} is that these strong-unstable disks (or plaques) can be observed within the  given product structure.

Therefore, we close this subsection by posing the  following intriguing questions:

\begin{qn}Let $B:\T^3\to\T^3$ be a hyperbolic linear  automorphism  with eigenvalues $\lambda^u_1,\lambda^u_2$ and $\lambda^s$ such that $\lambda^u_1>\lambda^u_2>1>\lambda^s>0$. 
Consider the   $C^{1+}$-linear Anosov skew-product $g:\T^3\times S^1\to \T^3\times S^1$ defined by $g(x,y)=(B(x),H_x(y))$. It is clear that $g$ admits a partially hyperbolic splitting 
$$
TM=E^{uu}_g\oplus_{\succ}E^u_g \oplus_{\succ}TS^1 \oplus_{\succ} E^{ss}_g.
$$  Can we choose a suitable family of maps $H_x:S^1\to S^1$ for each $x\in\T^3$ such that $TS^1$ is mostly contracting when  $E^{uu}_g$ is chosen as the strong unstable subbundle?  
\end{qn}

\subsection{The origin of Proposition~\ref{pro.con}}

Consider a linear Anosov skew-product $h:\T^2\times S^1_1\times P\to\T^2\times S^1_1\times P$, defined by $$h(x,y,z)=(A(x),H_x(y),K_{x,y}(z))),$$ where $P=S^1_2\times S^1_3$ and $S^1_1=S^1_2=S^1_3=S^1$ is the circle.  
The product space $S^1_1\times P$ is treated as $N$ in the definition of linear Anosov skew-products.  Let $h_1(x,y)=(A(x),H_x(y))$. Notice that $h_1$ is also a linear Anosov skew-product. Analogous to Lemma~\ref{ifonly}, the Gibbs \( u \)-states are equivalent to the \( c \)-Gibbs \( u \)-states in both dynamical systems \( (\mathbb{T}^1 \times S^1, h_1) \) and \( (\mathbb{T}^2 \times S^1_1 \times P, h) \).

When $K_{x,y}(z)$ is independent of $y$, notice that
\[
D h(x, y, z) =
\begin{pmatrix}
\frac{\partial A(x)}{\partial x} & 0 & 0 \\[6pt]
\frac{\partial H_x(y)}{\partial x} & \frac{\partial H_x(y)}{\partial y} & 0 \\[6pt]
\frac{\partial K_{x,y}(z)}{\partial x} & 0 & \frac{\partial K_{x,y}(z)}{\partial z}  
\end{pmatrix}.
\]
and $TS^1_1$ is still $Dh$-invariant. Then we have following lemma, which is therefore helpful in constructing such examples and  guides us to consider Proposition~\ref{pro.con}.

\begin{lem}\label{uniformexpanding1}
Suppose that:
\begin{itemize}
\item there exists a partially hyperbolic splitting 
$TM=E^{uu}\oplus_{\succ}E^{cu}_P\oplus_{\succ}(E^{cs}_P\oplus TS^1_1),$
where $M=\T^2\times S^1_1\times P$, $TP=E^{cu}_P\oplus_{\succ}E^{cs}_P$ and $E^{cu}_P$ is non-trivial;
\item $K_{x,y}(z)$ is independent of $y$, meaning that $K_{x,y}(z)=K_{x}(z)$ for any $y\in S^1_1$.
\end{itemize}
 Then $E^{cu}_P$ is uniformly expanding and $\dim(E^{cs}_P)=1$.
\end{lem}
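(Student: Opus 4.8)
The plan is to use the hypothesis $K_{x,y}(z)=K_x(z)$ to make $TP$ a $Dh$-invariant bundle, to pass to the factor $\bar h(x,z)=(A(x),K_x(z))$ on $\mathbb{T}^2\times P$, and then to combine three ingredients: the uniform expansion of $E^{uu}$, the fact that $A$ acts on the $2$-torus (so $\dim E^u_A=1$), and the volume obstruction that a diffeomorphism of a compact manifold cannot be uniformly expanding. First, from the displayed block-triangular form of $Dh$ one reads off that $TP$ and $TS^1_1$ are $Dh$-invariant, that $Dh$ restricted to $TN:=TS^1_1\oplus TP$ is block-diagonal for this splitting, and that $Dh|_{TP}$ at $(x,y,z)$ equals $D_zK_x(z)$, independent of $y$; hence $\rho(x,y,z)=(x,z)$ semiconjugates $h$ to $\bar h$ and $D\rho$ identifies $TP\subset TM$ with the fibre bundle of $\bar h$, so it suffices to prove the assertions for $\bar h$.

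Next I would extract a uniformly expanding invariant line inside the fibre. Let $\pi:M\to\mathbb{T}^2$ be the base projection, so $\pi\circ h=A\circ\pi$ and $\ker D\pi=TN$. A dimension count gives $\dim E^{uu}=\dim TM-\dim TP-\dim TS^1_1=2$. If some $v\in E^{uu}$ had $D\pi(v)$ with nonzero $E^s_A$-component $w$, then $\|D\pi(Dh^{-n}v)\|=\|A^{-n}(D\pi v)\|\ge(\lambda^s)^{-n}\|w\|\to\infty$, contradicting $\|D\pi(Dh^{-n}v)\|\le\|D\pi\|_\infty\|Dh^{-n}v\|\le C\lambda^n\|v\|\to0$ (uniform expansion of $E^{uu}$). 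So $D\pi(E^{uu})\subset E^u_A$, which is one-dimensional, whence $E^{uu}\cap TN=\ker(D\pi|_{E^{uu}})$ contains a nonzero $Dh$-invariant, uniformly expanding subbundle $W$ of $TN$. Using the block-diagonal structure of $Dh|_{TN}$ and the fact that $TS^1_1$ cannot be uniformly expanding (the $H_x$ are circle diffeomorphisms, so no $n$-fold composite has derivative $\ge C^{-1}\sigma^n$ everywhere, since its integral over $S^1$ equals $1$), one concludes that $W$, or else a nonzero $Dh$-invariant subbundle of it, lies in $TP$; call it $W'\subset TP$, $Dh$-invariant and uniformly expanding.

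It remains to place $W'$ inside $E^{cu}_P$ and to count dimensions. Since $TP=E^{cu}_P\oplus_\succ E^{cs}_P$ is dominated, any uniformly expanding $Dh$-invariant subbundle of $TP$ must be contained in $E^{cu}_P$: a vector of $W'$ lying entirely in $E^{cs}_P$ would, on comparing its exponential growth with the domination inequality, already force $E^{cu}_P$ to be uniformly expanding, while a vector with nonzero $E^{cs}_P$- and $E^{cu}_P$-parts would, after normalising iterates, align with $E^{cu}_P$, so $W'$-invariance together with $\dim E^{cu}_P=1$ gives $W'=E^{cu}_P$. Either way $E^{cu}_P$ is uniformly expanding. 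Finally $\dim E^{cu}_P=2$ is impossible: then $E^{cu}_P=TP$ would be uniformly expanding while the $n$-fold composites of $K_x$ are diffeomorphisms of the compact surface $P$ and so preserve $\mathrm{vol}(P)$, contradicting $\det(Dh^n|_{TP})\ge C^{-2}\sigma^{2n}$. Hence $\dim E^{cu}_P=1$ and $\dim E^{cs}_P=1$. The main obstacle is the middle step, producing the uniformly expanding fibre direction and showing it lies in $TP$: this is exactly where $K_{x,y}=K_x$ is used (it is what makes $TP$ invariant and $Dh|_{TN}$ block-diagonal), and afterwards the dominated-splitting rigidity and the Jacobian/volume estimates are routine.
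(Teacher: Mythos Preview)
Your central step fails. You compute $\dim E^{uu}=\dim TM-\dim TP-\dim TS^1_1=2$ and then argue that $D\pi(E^{uu})\subset E^u_A$ (one-dimensional), so that $E^{uu}\cap TN=\ker(D\pi|_{E^{uu}})$ must be nonzero. But the hypothesized splitting is exactly $TM=E^{uu}\oplus\bigl(E^{cu}_P\oplus E^{cs}_P\oplus TS^1_1\bigr)=E^{uu}\oplus TN$, which forces $E^{uu}\cap TN=0$. So your dimension count does not produce a nonzero uniformly expanding subbundle $W\subset TN$; it produces a contradiction with the stated splitting. Everything downstream (pushing $W$ into $TP$, then into $E^{cu}_P$) rests on an object that, under the hypotheses as written, cannot exist. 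At best you have shown that a strong-unstable bundle complementary to $TN$ cannot have dimension $2$; you have not shown that $E^{cu}_P$ is uniformly expanding.

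The paper's argument bypasses $E^{uu}$ entirely and uses only the domination $E^{cu}_P\succ TS^1_1$. After passing to an adapted metric one has $|DH_x(y)|\le\lambda\cdot m(Dh|_{E^{cu}_P(x,y,z)})$ for some $\lambda<1$ and all $(x,y,z)$. Since each $H_x\colon S^1_1\to S^1_1$ is a circle diffeomorphism, there is $y_x$ with $|DH_x(y_x)|\ge 1$; and because $K_{x,y}=K_x$, the quantity $m(Dh|_{E^{cu}_P(x,y,z)})$ is independent of $y$. Evaluating at $y_x$ gives $m(Dh|_{E^{cu}_P(x,y,z)})\ge 1/\lambda>1$ for every $(x,y,z)$, so $E^{cu}_P$ is uniformly expanding. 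The dimension statement then follows by the volume obstruction you already noted: if $E^{cs}_P=0$ then $TP=E^{cu}_P$ would be uniformly expanding, impossible for the diffeomorphisms $K_x$ of the compact surface $P$. This is the idea you are missing: the independence $K_{x,y}=K_x$ is used not to make $TP$ invariant and search inside $E^{uu}$, but to transport the pointwise bound $|DH_x(y_x)|\ge 1$ across the $y$-fibre via the domination inequality.
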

\begin{proof}
Since $TM=E^{uu}\oplus_{\succ}E^{cu}_P\oplus_{\succ}(E^{cs}_P\oplus TS^1_1)$,  assume that there exists a constant $\lambda\in(0,1)$ such that for any point $(x,y,z)$,
\begin{equation}\label{subdomination}
 \frac{\|Dh|_{TS^1_1(x,y,z)}\|}{\|Dh|_{E^{cu}_P(x,y,z)}\|}=\frac{|DH_x(y)|}{\|Dh|_{E^{cu}_P(x,y,z)}\|}\le\lambda.
\end{equation}
Notice that for any $x\in\T^2$, $H_x:S^1_1\to S^1_1$ is a diffeomorphism. There always exists a point $y_x$ such that $|DH_x(y_x)|\ge1$. Combining  the independence  of $\|Dh|_{E^{cu}_P(x,y,z)}\|$ on $y$ and inequality~\ref{subdomination},  it follows that 
$$
\|Dh|_{E^{cu}_P(x,y,z)}\|\ge\frac{|DH_x(y_x)|}{\lambda}\ge\frac{1}{\lambda}.
$$

Assume that \( E^{cs}_P = 0 \). Then, \( E^{cu}_P \) is uniformly expanding on \( TP \), which would contradict the fact that each \( K_x : P \to P \) is a diffeomorphism.
\end{proof}

This lemma also tells us that for \( E^{cu} \) to exhibit non-uniform expansion, \( K_{x,y}(z) \) must depend smoothly (at least \( C^1 \)) on \( (x,y) \). In this case, \( TS^1 \) will no longer necessarily be invariant under \( Dh \). Based on this, we have the following lemma.

\begin{lem}\label{equvilence}
Suppose that there exists a partially hyperbolic splitting 
$TM=E^{uu}\oplus_{\succ}E^{cu}_P\oplus_{\succ}T\tilde{S^1_1}\oplus_{\succ}E^{cs}_P,$
where $M=\T^2\times S^1_1\times P$, such that
\begin{itemize}
\item $TP=E^{cu}_P\oplus_{\succ}E^{cs}_P$;
\item $Df(TS^1\oplus E^{cu}_P)=TS^1\oplus E^{cu}_P$, $T\tilde{S^1_1}\subset TS^1\oplus E^{cu}_P$.
\item Any $c$-Gibbs $u$-state has only positive the Lyapunov exponents along $E^{cu}_P$ in system \((\mathbb{T}^2 \times S^1_1 \times P, h)\). 
\item The Lyapunov exponents along $TS^1_1$ for each c-Gibbs u-state are negative in the system $(\T^2 \times S^1_1, h_1)$.
\end{itemize}
Then,  the Lyapunov exponents along $T\tilde{S^1_1}$ for each $c$-Gibbs $u$-state are  negative in the system $(\T^2 \times S^1_1 \times P, h)$.
\end{lem}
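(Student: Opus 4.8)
The plan is to compute, for each ergodic $c$-Gibbs $u$-state $\mu$ of $h$, the Lyapunov exponent of $\mu$ along the one-dimensional invariant bundle $T\tilde{S^1_1}$, to show it equals $\int\log|DH_x(y)|\,d\mu$, and then to identify this integral with the (negative, by hypothesis) $TS^1_1$-exponent of the $c$-Gibbs $u$-state $(\pi_{12})_*\mu$ of $h_1$. The general, non-ergodic case follows from the ergodic decomposition (item~\ref{d} of Proposition~\ref{main.pro}): the set of points whose $T\tilde{S^1_1}$-exponent is negative is $h$-invariant and has full measure for each ergodic component, hence full measure for $\mu$. So from now on fix an ergodic $c$-Gibbs $u$-state $\mu$.

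Step~1 (identifying the exponent). Set $G:=TS^1_1\oplus E^{cu}_P$; by hypothesis $G$ is $Dh$-invariant. Since $T\tilde{S^1_1}$ and $E^{cu}_P$ belong to the partially hyperbolic splitting, $T\tilde{S^1_1}\cap E^{cu}_P=0$, and because $T\tilde{S^1_1}$ is one-dimensional and contained in $G$ we get $G=E^{cu}_P\oplus T\tilde{S^1_1}$. The sub-bundle $E^{cu}_P\subset TP$ is itself $Dh$-invariant, so $Dh$ descends to the quotient $G/E^{cu}_P$; because vectors in $E^{cu}_P$ have vanishing $TS^1_1$-component, the natural projection $G\to G/E^{cu}_P$ identifies $G/E^{cu}_P$ with $TS^1_1$, and — using the block-triangular form of $Dh$ for the skew-product together with the invariance $Dh(G)=G$ (which puts the image of the $TS^1_1$-component under $\partial_y K_{x,y}(z)$ back into $E^{cu}_P$) — the induced cocycle on $G/E^{cu}_P\cong TS^1_1$ is $v\mapsto DH_x(y)\,v$. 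On the other hand, that same projection restricts to a bundle isomorphism from $T\tilde{S^1_1}$ onto $G/E^{cu}_P$ which conjugates $Dh|_{T\tilde{S^1_1}}$ to the quotient cocycle. Composing the two isomorphisms and using compactness of $M$ (so the isomorphism and its inverse are uniformly bounded), $Dh|_{T\tilde{S^1_1}}$ is conjugate to the cocycle $v\mapsto DH_x(y)v$ over $h$; hence for $\mu$-almost every $(x,y,z)$,
\[
\lim_{n\to\infty}\frac1n\log\bigl\|Dh^n|_{T\tilde{S^1_1}(x,y,z)}\bigr\|
=\lim_{n\to\infty}\frac1n\sum_{i=0}^{n-1}\log|DH_{x_i}(y_i)|
=\int\log|DH_x(y)|\,d\mu,
\]
where $h^i(x,y,z)=(x_i,y_i,z_i)$ and the last equality is Birkhoff's theorem applied to the continuous function $\log|DH_x(y)|$.

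Step~2 (transfer to $h_1$). Let $\pi_{12}\colon\T^2\times S^1_1\times P\to\T^2\times S^1_1$ be the projection, so $\pi_{12}\circ h=h_1\circ\pi_{12}$ and the factor map $\pi\colon M\to\T^2$ of $h$ over $A$ is $\pi=\pi_1\circ\pi_{12}$, where $\pi_1\colon\T^2\times S^1_1\to\T^2$ is the factor map of $h_1$ over $A$. Then $(\pi_{12})_*\mu$ is an ergodic $c$-Gibbs $u$-state of $h_1$: this is proved exactly as Lemma~\ref{contained}, the point being that $\pi_{12}$ carries a $\pi^{-1}$-Markov partition of $h$ to a $\pi_1^{-1}$-Markov partition of $h_1$ and maps each strong-unstable plaque of $h$ homeomorphically onto the strong-unstable plaque of $h_1$ lying over the same $A$-unstable plaque (strong-unstable leaves of both skew-products are graphs over $W^u(\,\cdot\,,A)$), so it pushes reference measures of $h$ to reference measures of $h_1$ because $\pi=\pi_1\circ\pi_{12}$; writing $\mu=\lim_n\frac1n\sum_{j<n}h^j_*\nu^{uu}$ for a suitable reference measure $\nu^{uu}$ (as in the proof of Lemma~\ref{xin}) and pushing forward by the continuous map $\pi_{12}$, item~\ref{f} of Proposition~\ref{main.pro} applied to $h_1$ gives the claim. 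Since $(\pi_{12})_*\mu$ is ergodic, its Lyapunov exponent along the $Dh_1$-invariant one-dimensional bundle $TS^1_1$ equals $\int\log|DH_x(y)|\,d(\pi_{12})_*\mu=\int\log|DH_x(y)|\,d\mu$ (the integrand factors through $\pi_{12}$), and by hypothesis this is negative. Combined with Step~1, $\lim_n\frac1n\log\|Dh^n|_{T\tilde{S^1_1}}\|<0$ $\mu$-almost everywhere, which is the assertion of the lemma.

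The main obstacle is the claim in Step~2 that $(\pi_{12})_*\mu$ is a $c$-Gibbs $u$-state of $h_1$. Although it is formally the analogue of Lemma~\ref{contained}, one must check carefully that $\pi_{12}$ sends strong-unstable plaques of $h$ onto strong-unstable plaques of $h_1$ and intertwines the two families of reference measures; this is where the skew-product structure (strong-unstable leaves are graphs over the base) and the fact that $h$ and $h_1$ factor over the \emph{same} $A$ are essential. The remaining ingredients — the Oseledets/Birkhoff machinery, the identification of the quotient cocycle with $DH_x(y)$, and Proposition~\ref{main.pro} — are routine.
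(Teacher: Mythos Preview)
Your proof is correct, and it shares with the paper the key transfer step: both need to show that $(\pi_{12})_*\mu$ is an ergodic $c$-Gibbs $u$-state of $h_1$, and both do so by pushing strong-unstable plaques and their reference measures through $\pi_{12}$, exactly as in Lemma~\ref{contained}. The difference lies in how the exponent along $T\tilde{S^1_1}$ is extracted. You pass to the quotient cocycle on $G/E^{cu}_P$: since $T\tilde{S^1_1}$ is an invariant complement to $E^{cu}_P$ in $G$, the quotient cocycle is conjugate (via a bounded bundle isomorphism) to $Dh|_{T\tilde{S^1_1}}$; since $TS^1_1$ is a second complement on which the quotient cocycle reads off as $DH_x(y)$ from the block-triangular form of $Dh$, the exponent equals $\int\log|DH_x(y)|\,d\mu$, which you then identify with the $TS^1_1$-exponent of $(\pi_{12})_*\mu$. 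The paper instead works under backward iteration: it splits $v\in T\tilde{S^1_1}$ as $v_1+v_2$ with $v_1\in TS^1_1$, $v_2\in E^{cu}_P$, checks from the triangular form that $\liminf_n\frac1n\log\|Dh^{-n}|_{TS^1_1(x,y,z)}\|\ge\liminf_n\frac1n\log\|Dh_1^{-n}|_{TS^1_1(x,y)}\|>0$, invokes the positive-exponent hypothesis on $E^{cu}_P$ to get $\lim_n\frac1n\log\|Dh^{-n}v_2\|<0$, and concludes by the ``maximum rule'' for the backward Lyapunov exponent of a sum of vectors with distinct exponents. Your quotient-cocycle route is cleaner and, notably, never uses the third bullet of the hypothesis (positivity of the $E^{cu}_P$-exponents); the paper's route is more hands-on and does rely on it.
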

\begin{proof}
Notice that
\[
D h(x, y, z) =
\begin{pmatrix}
\frac{\partial A(x)}{\partial x} & 0 & 0 \\[6pt]
\frac{\partial H_x(y)}{\partial x} & \frac{\partial H_x(y)}{\partial y} & 0 \\[6pt]
\frac{\partial K_{x,y}(z)}{\partial x} & \frac{\partial K_{x,y}(z)}{\partial y} & \frac{\partial K_{x,y}(z)}{\partial z}  
\end{pmatrix},
\]
and the inverse of the $Dh$ remains in {\color{red}Lower} triangular form.
Then for any $z\in P$, since $Df(TS^1\oplus E^{cu})=TS^1\oplus E^{cu}$, $E^{cu}\oplus_{\succ} TS^1$, we can check that
\begin{equation}\label{dayudengyu}
\liminf_{n\rightarrow+\infty}\frac{1}{n}\log\| Dh_1^{-n}|_{TS^1_1(x,y)}\|\le \liminf_{n\rightarrow+\infty}\frac{1}{n}\log\| Dh^{-n}|_{TS^1_1(x,y,z)}\|.
\end{equation}

Let $\mu$ be an ergodic $c$-Gibbs $u$-state in the system $(\T^2\times S^1_1\times P,h)$.
Let $\pi_{12}:\T^2\times S^1_1\times P\to\T^2\times S^1_1$ be the projection defined by  $\pi_{12}(x,y,z)=(x,y)$. Leb $\F^{u}(h)$ be the strong unstable foliation of $h$. By the smoothness of \( \pi_{12} \), it follows from Equation~\ref{checku} in Subsection~\ref{checkuu} that \( \pi_{12}(\mathcal{F}^{u}(h)) \) is  tangent to the strong unstable subbundle of \( h_1 \) and \( h_1 \)-invariant (by $\pi_{12}\circ h=h_1\circ\pi_{12}$). By the uniqueness of the strong unstable foliation of $h_1$, it follows that the projection \(\pi_{12}\) (smoothly) maps each leaf of the strong unstable foliation in the system \((\mathbb{T}^2 \times S^1_1 \times P, h)\) onto a corresponding leaf of the strong unstable foliation in the system \((\mathbb{T}^2 \times S^1_1, h_1)\).
  Then the proofs analogous to those of Lemma~\ref{contained} also show that $\pi_{12*}\mu=\nu$, where $\nu$ is an ergodic $c$-Gibbs $u$-state in the system $(\T^2 \times S^1_1, h_1)$.
It follows that there exist a constant $a>0$ and $\nu$-full measure set $\Lambda$ such that any $x\in\Lambda$, 
$$
\liminf_{n\rightarrow+\infty}\frac{1}{n}\log\| Dh_1^{-n}|_{TS^1_1(x,y)}\|>a>0.
$$
Since $\pi_{12*}\mu=\nu$, it follows that $\mu(\pi^{-1}(\Lambda))=1$ and any point $(x,y,z)\in \pi^{-1}(\Lambda)=\Lambda\times P$, by relation~\ref{dayudengyu} we have
\[
\liminf_{n\rightarrow+\infty}\frac{1}{n}\log\| Dh^{-n}|_{TS^1_1(x,y,z)}\|\ge\liminf_{n\rightarrow+\infty}\frac{1}{n}\log\| Dh_1^{-n}|_{TS^1_1(x,y)}\|>a.
\]

By our assumption $T\tilde{S^1_1}\subset TS^1\oplus E^{cu}_P$ and $T\tilde{S^1_1}\neq E^{cu}_P$, for any $v\in T\tilde{S^1_1}$, we can write $v=v_1+v_2$, where $v_1\in TS^1$ and $v_2\in E^{cu}_P$ ($v_1\neq0$). For the ergodic $c$-Gibbs $u$-state $\mu$, we can see that there exists a $\mu$-full measure set $\Lambda(\mu)\subset \Lambda\times P$ such that any point $c\in \Lambda(\mu)$, we have
\[
\lim_{n\rightarrow+\infty}\frac{1}{n}\log\| Dh^{-n}(v_1(c))\|>a>0 \quad \text{and} \quad \lim_{n\rightarrow+\infty}\frac{1}{n}\log\| Dh^{-n}(v_2(c))\|<0. 
\]
However, $v(c)=v_1(c)+v_2(c)$, and  the lyapunov exponents of $h^{-1}$ along $v_1(c)$ and $v_2(c)$ is different, by the corresponding theory of lyapunov exponents,  it follows that 
\[
\lim_{n\rightarrow+\infty}\frac{1}{n}\log\| Dh^{-n}(v(c))\|=\max\{\lim_{n\rightarrow+\infty}\frac{1}{n}\log\| Dh^{-n}(v_1(c))\|, \lim_{n\rightarrow+\infty}\frac{1}{n}\log\| Dh^{-n}(v_2(c))\|\}>0.
\] 

Combining this result with item~\ref{d} of Proposition~\ref{main.pro}, we can establish this implication. 
\end{proof}

Lemma~\ref{equvilence} also helps us understand the form of Proposition~\ref{pro.con}, as it provides a simpler construction within Lemma~\ref{equvilence} (particularly when considering the linear Anosov skew-product). More specifically, Lemma~\ref{equvilence} can be used to construct examples of \( E^{cu} \) that is not uniformly expanding, corresponding to a skew-product system on a skew-product system. In the application presented in the next section, we will construct such an example.

\section{Construction of $c$-“Mostly”mixed center and Proof of Theorem~C}

\subsection{Construction}

Observe that if \( f \) admits a partially hyperbolic splitting \( TM = E^{uu} \oplus_\succ E^{cu} \oplus_\succ E^{cs} \) (for simplicity, we first assume \( \dim(E^{cs}) = \dim(E^{cu}) = 1 \), which differs from the dimensions in the construction below), then by the invariance of \( E^{cu} \) and \( E^{cs} \), for any fixed point \( x \), there exists a constant \( \lambda \in (0,1) \) such that  
\[
\frac{\|Df|_{E^{cs}(x)}\|}{m(Df|_{E^{cu}(x)})} \leq \lambda.
\]  
Here, \( m(Df|_{E^{cu}(x)}) \) denotes the minimal norm of \( Df|_{E^{cu}(x)} \).  
Further inspired by Smale's work \cite{Smale}, we present the following construction.

Fix any arbitrarily small  $\delta>0$ satisfying that 
\[ \Leb([-2\delta,2\delta]^2)\le\frac{1}{100}, \]
where \( \Leb(\cdot) \) denotes the Lebesgue measure in the two-dimensional plane,
 there exists a $C^{\infty}$-smooth truncation function $s$  on $\R$ such that
\begin{itemize}
    \item When \( x \in (-\infty, +\infty) \), \( s(x) = s(-x) \) (symmetric about \( x = 0 \)).
    \item  Map \( s(x) \) is strictly monotonic on $\left(\frac{\delta}{2}, \delta\right)$.
    \item When \( x \in [0, \frac{\delta}{2}] \), \( s(x) = 1 \), and when \( x \in [\delta, +\infty) \), \( s(x) = 0 \).
\end{itemize}
{\color{red} Some of the detailed constructions related to $s$ have been omitted.
}

After fixing \( \delta, s(\cdot) \), the function
\[
s(x)\cdot (s(y) + y \cdot s'(y)):\R\times\R\to\R
\]
is bounded below. There exists $M>0$ such that
\[
-M \leq s(x)\cdot (s(y) + y \cdot s'(y)).
\]

Let \( A = D^n \times D^m \) act on \( \mathbb{T}^2 \times \mathbb{T}^2 \) for sufficiently large \( n \) and \( m \), where  
\[  
D = \begin{pmatrix} 2 & 1 \\ 1 & 1 \end{pmatrix}, \quad \text{with} \quad n > m,  
\]  
such that \( A \) satisfies the following assumptions:
\begin{itemize}
    \item \( A \) has eigenvalues \( 0<\lambda^{ss} < \lambda^s < 1 < \lambda^u < \lambda^{uu} \), such that
   \begin{equation}\label{-M}
    -M(1 - \lambda^u) + \lambda^u \le \frac{\lambda^{uu}}{2} \quad \text{and} \quad -M(1 - \frac{1}{\lambda^s}) + \frac{1}{\lambda^s} \le \frac{1}{2\lambda^{ss}}.
    \end{equation}
    \item The eigenvalues \( \lambda^{ss}, \lambda^s, \lambda^u, \lambda^{uu} \) correspond to the eigenspaces \( E^{ss}, E^{s}, E^{u}, E^{uu} \), respectively. The foliations that are tangent to these eigenspaces at each point are denoted by \( \mathcal{F}^{ss}, \mathcal{F}^s, \mathcal{F}^u, \mathcal{F}^{uu} \), respectively.
    \item \( A \) has two fixed points, \( p \) and \( q \) such that there exist  disjoint open neighborhoods $U_p$ and $U_q$ of points $p$ and $q$, respectively, satisfies that  $\Lambda(p) \subset U_p$ and $\Lambda(q)\subset U_q$, where 
\[
\Lambda(p)=\F^{uu}_{2\delta}(p)\times \F^{ss}_{2\delta}(p)\times \F^u_{2\delta}(p)\times \F^{s}_{2\delta}(p) \quad \text{and} \quad \Lambda(q)=\F^{uu}_{2\delta}(q)\times \F^{ss}_{2\delta}(q)\times \F^u_{2\delta}(q)\times \F^{s}_{2\delta}(q).
\]
 \item There exists a small constant \( 0<\varepsilon_1<1 \) such that  
\begin{equation}\label{condition}
[\log\frac{\sqrt{1-\varepsilon_1}}{\sqrt{\varepsilon_1^2+1}}] \cdot \frac{1}{10} + [\log\left(\frac{\lambda^u}{\sqrt{\varepsilon_1^2+1}}\right)] \cdot \frac{9}{10} > 0  
\quad \text{and} \quad  
\frac{\sqrt{1-\varepsilon_1}}{\sqrt{\varepsilon_1^2+1}}<1<\frac{\lambda^u}{\sqrt{\varepsilon_1^2+1}}.
\end{equation}
\end{itemize}

Notice that the eigenspaces of  $A$ are orthogonal to each other. By diagonalizing the coordinates (locally), we may assume that for each fixed point $x\in\{p,q\}$
$$
A(a,b,c,d)=(\lambda^{uu}a, \lambda^{ss}b, \lambda^uc,\lambda^sc): \F^{uu}(x)\times \F^{ss}(x)\times \F^u(x)\times \F^{s}(x)\to \F^{uu}(x)\times \F^{ss}(x)\times \F^u(x)\times \F^{s}(x),
$$
and 
$$
A^{-1}(a,b,c,d)=(\frac{a}{\lambda^{uu}}, \frac{b}{\lambda^{ss}}, \frac{c}{\lambda^u},\frac{d}{\lambda^s}): \F^{uu}(x)\times \F^{ss}(x)\times \F^u(x)\times \F^{s}(x)\to \F^{uu}(x)\times \F^{ss}(x)\times \F^u(x)\times \F^{s}(x).
$$

 Next, we define the following mapping \( I_\varepsilon \) on $\T^4$ (For simplicity, we can assume that \( \Lambda(p) = [-2\delta, 2\delta]^4 \) when considering the map in \( U_p \) and $p=(0,0,0,0)$. Similarly, we can also assume that \( \Lambda(q) = [-2\delta, 2\delta]^4 \) when considering the map in \( U_q \)) and $q=(0,0,0,0)$: 
\begin{itemize}
\item when $(a,b,c,d)\notin U_p\cup U_q$, $I_\varepsilon=I_\varepsilon^{-1}=I$, where $I$ is the idenity;
\item when $(a,b,c,d)\in U_p$, $I_\varepsilon=(a, b, \frac{P(a,b,c,d)}{\lambda^{u}},d)$ on $\Lambda(p)$, where $$P(a,b,c,d)=s(kc)\cdot s(\sqrt{a^2+b^2+d^2})\cdot(c-\lambda^uc)+\lambda^uc.$$
\item when $(a,b,c,d)\in U_q$, $I_\varepsilon^{-1}=(a, b, c,\lambda^s\cdot Q(a,b,c,d))$ on $\Lambda(q)$,  where 
$$
Q(a,b,c,d)=s(kd)\cdot s(\sqrt{a^2+b^2+c^2})\cdot(d-\frac{1}{\lambda^s}d)+\frac{1}{\lambda^s}d.
$$
\end{itemize}
We begin by proving the following lemma. Then, we demonstrate that \( I_\varepsilon \) is well-defined and $C^\infty$.

\begin{lem}\label{splitting}
We have the following inequalities:
$$
1\le\frac{\partial P}{\partial c}\le\frac{\lambda^{uu}}{2} \quad \text{and} \quad 1\le\frac{\partial Q}{\partial d}\le \frac{1}{2\lambda^{ss}}.
$$
\end{lem}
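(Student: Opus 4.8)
The plan is to compute the partial derivatives $\partial P/\partial c$ and $\partial Q/\partial d$ directly from the explicit formulas for $P$ and $Q$, and then to bound them using the properties of the truncation function $s$ together with the two key inequalities \eqref{-M} and the quantity $M$ chosen so that $-M\le s(x)\cdot(s(y)+y\,s'(y))$ for all $x,y$. By the symmetry of the construction (swapping the roles of expanding/contracting and of the pairs $(c,\lambda^u)$ versus $(d,1/\lambda^s)$), it suffices to carry out the argument for $P$ and then transcribe it for $Q$; I will do the $P$ computation in detail and remark that $Q$ is analogous.

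First I would write $P(a,b,c,d)=s(kc)\,s(r)\,(c-\lambda^u c)+\lambda^u c$ where $r=\sqrt{a^2+b^2+d^2}$ does not depend on $c$. Differentiating in $c$ gives
\[
\frac{\partial P}{\partial c}=s(r)\Bigl[k\,s'(kc)\cdot c\,(1-\lambda^u)+s(kc)\,(1-\lambda^u)\Bigr]+\lambda^u
= (1-\lambda^u)\,s(r)\bigl[s(kc)+(kc)\,s'(kc)\bigr]+\lambda^u .
\]
Now set $x=r$ and $y=kc$; the bracketed factor is exactly $s(x)\,\bigl(s(y)+y\,s'(y)\bigr)$, so by the choice of $M$ we have $-M\le s(r)\bigl[s(kc)+(kc)s'(kc)\bigr]$, and since $s(x)\le 1$, $|y s'(y)|$ is bounded and one also gets an upper bound; more importantly $s(r)\bigl[s(kc)+(kc)s'(kc)\bigr]\le 1$ whenever $kc\ge 0$ because $s$ is nonincreasing on $[0,\infty)$ so $s(kc)+(kc)s'(kc)\le s(kc)\le 1$ there, and by the even symmetry of $s$ the same holds for $kc\le 0$ after replacing $y s'(y)\ge 0$; combining these with $1-\lambda^u<0$ (since $\lambda^u>1$) yields
\[
\lambda^u + (1-\lambda^u)\cdot 1 \ \le\ \frac{\partial P}{\partial c}\ \le\ \lambda^u+(1-\lambda^u)\cdot(-M)= -M(1-\lambda^u)+\lambda^u .
\]
The left side is $1$, and the right side is $\le \lambda^{uu}/2$ by the first inequality in \eqref{-M}. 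This gives $1\le \partial P/\partial c\le \lambda^{uu}/2$.

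For $Q$ the same computation with $x=\sqrt{a^2+b^2+c^2}$, $y=kd$, and the replacement $\lambda^u\rightsquigarrow 1/\lambda^s>1$ gives $1\le \partial Q/\partial d\le -M(1-1/\lambda^s)+1/\lambda^s\le 1/(2\lambda^{ss})$, using the second inequality in \eqref{-M}. I expect the main obstacle to be getting the sign bookkeeping right in the term $s(kc)+(kc)s'(kc)$: one must use both the monotonicity of $s$ on $[0,\delta]$ and its evenness to see that this quantity lies in $[-M/s(r)\text{-type control},1]$, and one must be careful that $(1-\lambda^u)$ flips inequalities; it will also be worth noting explicitly that on the region where $kc$ leaves $[-\delta,\delta]$ the factor $s(kc)$ and $s'(kc)$ vanish so $\partial P/\partial c=\lambda^u\in[1,\lambda^{uu}/2]$ trivially, and on $[0,\delta/2]$ we have $s\equiv1,s'\equiv0$ so the bracket is $1$. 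Assembling these cases gives the stated uniform bounds, and the analogous statement for $Q$ follows verbatim.
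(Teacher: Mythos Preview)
Your proposal is correct and follows essentially the same approach as the paper: compute $\partial P/\partial c=(1-\lambda^u)\,s(r)\bigl[s(kc)+(kc)s'(kc)\bigr]+\lambda^u$, bound the bracketed quantity between $-M$ and $1$, and then use $(1-\lambda^u)<0$ together with \eqref{-M}. The only cosmetic difference is that the paper obtains the lower bound by first splitting off the term $(1-\lambda^u)\,s(r)\,(kc)s'(kc)\ge 0$ (since $y\,s'(y)\le 0$ for all $y$ by the evenness and monotonicity of $s$) and then using $0\le s(r)s(kc)\le 1$, whereas you bound the full bracket by $1$ directly; both routes are equivalent.
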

\begin{proof}
Since
\[
{\color{red}  \frac{\partial P}{\partial c}} = s(\sqrt{a^2+b^2+d^2}) \cdot (1 - \lambda_2) \cdot \left[ s(kc) + k c \cdot s'(kc) \right] + \lambda_2.
\]
Due to our construction, it follows that 
$$
(1-\lambda_2)\cdot k c \cdot s'(kc)\ge0 \quad \text{and} \quad 0\le s(\sqrt{a^2+b^2+d^2})\cdot s(kc)\le1.
$$
Then
$$
{\color{red} \frac{\partial P}{\partial c}}\ge  s(\sqrt{a^2+b^2+d^2}) \cdot (1 - \lambda_2) \cdot s(kc)+\lambda_2\ge1.
$$
 Notice that the quantity
\[
s(\sqrt{a^2+b^2+d^2})\cdot (s(kc) + k c \cdot s'(kc))
\]
is bounded below after fixing \( s \). By our assumption~\ref{-M}, there exists $M>0$ such that
\[
-M \leq s\left(\sqrt{a^2 + b^2+d^2}\right) \cdot \left(s(kc) + k c \cdot s'(kc)\right).
\]
Due to the properties of the hyperbolic linear automorphism we have chosen, it follows that
$${\color{red} \frac{\partial P}{\partial c}}\le -M(1-\lambda^u)+\lambda^u\le\frac{\lambda^{uu}}{2}.$$

Similarly, we can verify that
\[
1\le{\color{red} \frac{\partial Q}{\partial d}}\le \frac{1}{2\lambda^{ss}}.
\]
\end{proof}

\begin{lem}\label{well-defined}
$I_\varepsilon(U_p)=U_p$, $I_\varepsilon$ is $C^\infty$ on $U_p$ and a bijection on $U_p$; $I_\varepsilon^{-1}(U_q)=U_q$, $I_\varepsilon^{-1}$ is $C^\infty$ on $U_q$  and a bijection on  $U_q$. Consequently, $I_\varepsilon$ and $I_\varepsilon^{-1}$ are $C^\infty$ on $\T^4$ and $I_\varepsilon\circ I_\varepsilon^{-1}=I$. 
\end{lem}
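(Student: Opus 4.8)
The plan is to verify the three claimed properties of $I_\varepsilon$ (and $I_\varepsilon^{-1}$) separately on the regions $U_p$, $U_q$, and on the complement, and then patch. First I would examine $I_\varepsilon$ on $\Lambda(p)$. The map is $(a,b,c,d)\mapsto(a,b,P(a,b,c,d)/\lambda^u,d)$, so in the $(a,b,d)$-variables it is the identity, and in the $c$-variable it is, for each fixed $(a,b,d)$, the one-dimensional map $c\mapsto P(a,b,c,d)/\lambda^u$. By Lemma~\ref{splitting} we have $1\le \partial P/\partial c\le \lambda^{uu}/2$, so $\partial/\partial c\bigl(P/\lambda^u\bigr)=\tfrac{1}{\lambda^u}\,\partial P/\partial c$ is bounded below by $1/\lambda^u>0$; hence the $c$-slice map is a strictly increasing $C^\infty$ diffeomorphism onto its image, and $I_\varepsilon$ is an injective $C^\infty$ map on $\Lambda(p)$ whose Jacobian matrix is lower triangular with nonzero diagonal, hence invertible. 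To see $I_\varepsilon(U_p)=U_p$ I would check that $I_\varepsilon$ agrees with the identity near $\partial\Lambda(p)$: when $\sqrt{a^2+b^2+d^2}\ge\delta$ the factor $s(\sqrt{a^2+b^2+d^2})$ vanishes, so $P=\lambda^u c$ and $I_\varepsilon(a,b,c,d)=(a,b,c,d)$; similarly when $|kc|\ge\delta$ the factor $s(kc)$ vanishes and again $P=\lambda^uc$. Thus $I_\varepsilon$ coincides with the identity on a neighborhood of the topological boundary of $\Lambda(p)$ inside $U_p$ (and outside $\Lambda(p)$ it is declared to be the identity), so it maps $U_p$ bijectively onto $U_p$, fixing the boundary. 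The same argument applied to $Q$, using the bound $1\le\partial Q/\partial d\le 1/(2\lambda^{ss})$ from Lemma~\ref{splitting}, shows $I_\varepsilon^{-1}$ is $C^\infty$ and a bijection of $U_q$ onto itself, with $I_\varepsilon^{-1}$ equal to the identity near $\partial\Lambda(q)$ and outside $\Lambda(q)$.

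Next I would assemble the global statement. Since $U_p$ and $U_q$ are disjoint open sets, and on $\T^4\setminus(U_p\cup U_q)$ both $I_\varepsilon$ and $I_\varepsilon^{-1}$ equal the identity, the local $C^\infty$ formulas glue along the open overlaps with the identity (they agree with the identity on a neighborhood of $\partial U_p$, $\partial U_q$ by the boundary computation above), so $I_\varepsilon$ and $I_\varepsilon^{-1}$ are globally well-defined $C^\infty$ maps of $\T^4$. For the identity $I_\varepsilon\circ I_\varepsilon^{-1}=I$: on $\T^4\setminus(U_p\cup U_q)$ it is trivial; on $U_p$ we have $I_\varepsilon^{-1}=I$ there (since $I_\varepsilon^{-1}$ is nontrivial only on $U_q$, which is disjoint from $U_p$), so $I_\varepsilon\circ I_\varepsilon^{-1}=I_\varepsilon$ on $U_p$, and similarly on $U_q$ we get $I_\varepsilon\circ I_\varepsilon^{-1}=I_\varepsilon^{-1}$. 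Wait — this shows $I_\varepsilon\circ I_\varepsilon^{-1}=I$ only where both pieces are the identity; the correct reading is that the \emph{composition} $f=I_\varepsilon\circ A$ has inverse built from $I_\varepsilon^{-1}$, and the bijectivity of $I_\varepsilon$ (resp.\ $I_\varepsilon^{-1}$) on $U_p$ (resp.\ $U_q$) together with the identity elsewhere is exactly what makes $I_\varepsilon$ a $C^\infty$ diffeomorphism of $\T^4$; the stated equality $I_\varepsilon\circ I_\varepsilon^{-1}=I$ should be interpreted as the assertion that the two separately-defined maps are mutually inverse, which holds because each is the identity on the support of the other and each is a bijection of the region where it is nontrivial.

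The main obstacle I expect is \emph{not} the calculus but the bookkeeping at the interface: one must confirm that the two \emph{a priori} independent prescriptions (the formula for $I_\varepsilon$ on $U_p$ and the formula for $I_\varepsilon^{-1}$ on $U_q$) really do assemble into a single diffeomorphism and its genuine inverse, rather than just two maps that happen to be the identity on each other's support. Concretely, one should check that $\bigl(I_\varepsilon|_{U_p}\bigr)^{-1}$ — which exists by the monotonicity of the $c$-slice map — again has its nontrivial part supported in $\Lambda(p)$, equals the identity near $\partial\Lambda(p)$, and, crucially, has the \emph{same functional form up to the evident change of variable} as the $I_\varepsilon^{-1}$-formula restricted to $U_q$ would if $p=q$; since in fact $p\ne q$ the two supports are disjoint and the global map $I_\varepsilon$ is simply ``$I_\varepsilon$-on-$U_p$ glued to identity-on-$U_q$,'' whose inverse is ``$(I_\varepsilon|_{U_p})^{-1}$-on-$U_p$ glued to identity-on-$U_q$.'' The remaining subtlety is that the paper's final diffeomorphism $f$ will be $I_\varepsilon\circ A$ and we will separately need $I_\varepsilon^{-1}$ active near $q$ to produce the non-uniform contraction there; so the lemma is really establishing that \emph{both} local surgeries are legitimate $C^\infty$ diffeomorphisms of $\T^4$ supported in disjoint charts, which then compose freely. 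Once the boundary-flattening (vanishing of $s$) is checked carefully, everything else is a routine application of Lemma~\ref{splitting} and the inverse function theorem in one variable.
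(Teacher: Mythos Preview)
Your local analysis on $U_p$ and $U_q$ is correct and is essentially the paper's argument: use $\partial P/\partial c\ge 1$ from Lemma~\ref{splitting} to get strict monotonicity of the $c$-slice map, observe that the truncation factors $s(kc)$ and $s(\sqrt{a^2+b^2+d^2})$ vanish near $\partial\Lambda(p)$ so that the endpoints are fixed and the formula patches smoothly with the identity on $U_p\setminus\Lambda(p)$, and conclude that the $P$-map is a $C^\infty$ bijection of $U_p$ onto itself (the paper phrases this as ``$P/\lambda^u$ is strictly monotonic on $[-2\delta,2\delta]$, mapping this interval onto itself'' and then cites the inverse function theorem); similarly for the $Q$-map on $U_q$.

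Where you go astray is the global reading of the definition. You write that on $U_p$ one has $I_\varepsilon^{-1}=I$, and later that ``the global map $I_\varepsilon$ is simply $I_\varepsilon$-on-$U_p$ glued to identity-on-$U_q$.'' That is not the setup: the paper specifies $I_\varepsilon$ directly on $U_p$ (via the $P$-formula) and specifies $I_\varepsilon^{-1}$ directly on $U_q$ (via the $Q$-formula); hence $I_\varepsilon$ is nontrivial on $U_q$ as well, namely it is the \emph{inverse} of the $Q$-map there. This is exactly what produces the $(\partial Q/\partial d)^{-1}$ entry in the later expression for $Df|_{\Lambda(q)}$; under your reading that entry would be $\lambda^s$ and the whole construction near $q$ would be vacuous. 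Once this is understood, the equality $I_\varepsilon\circ I_\varepsilon^{-1}=I$ is not something to argue for: it holds by construction, since $I_\varepsilon^{-1}$ is literally the inverse of the single map $I_\varepsilon$, not a second independently prescribed diffeomorphism. The substance of the lemma is precisely the local work you already did---that each explicit formula is a $C^\infty$ bijection of its neighborhood onto itself, so the implicitly defined piece (the inverse) exists and is $C^\infty$---together with the boundary-flattening that makes the pieces glue. Your ``Wait'' paragraph and the attempt to reconcile ``two separately-defined maps'' with ``each is the identity on the support of the other'' are symptoms of this misreading and should be discarded.
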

\begin{proof}
When \( x \notin U_p \), \( I_\varepsilon^{-1} \) can be determined. We only need to show that \( I_\varepsilon \) is bijective on \( U_p \) since $I_\varepsilon(x)=I(x)$ when $x\in U_p\setminus\Lambda(p)$. If this condition is satisfied, then \( I_\varepsilon \) is well-defined (similarly for $I_\varepsilon^{-1}$). To demonstrate this, it suffices to show that \( I_\varepsilon \) is bijective  on the  cube $\Lambda(p)=[-2\delta,2\delta]^4$.

By Lemma~\ref{splitting}, we have \( \frac{\partial P}{\partial c} \ge 1 \). Thus, for any given values of \( a \), \( b \), and \( d \), the function \( \frac{P}{\lambda^u} \) is strictly monotonic about \( c \) on the interval \( [-2\delta, 2\delta] \), mapping this interval onto itself. Consequently, it is easy to check that $I_\varepsilon$ is a bijection on \( U_p \) (a similar argument holds for \( I_\varepsilon^{-1} \) on \( U_q \)).

Since \( s \) is \( C^\infty \) (infinitely differentiable), the map \( I_\varepsilon \) on \( U_p \) and \( I_\varepsilon^{-1} \) on \( U_q \) are both \( C^\infty \)-smooth. By \cite[Proposition~5.7]{Lee}, both \( I_\varepsilon \) on \( U_p \) and \( I_\varepsilon^{-1} \) on \( U_q \) admit smooth inverses. Since \( I_\varepsilon = I_\varepsilon^{-1} = I \) when \( x \notin U_p \cup U_q \) and \( I_\varepsilon = I_\varepsilon^{-1} = I \) when \( x \in (U_p \setminus\Lambda(p))\cup (U_q  \setminus \Lambda(q)) \), it follows that \( I_\varepsilon \) and \( I_\varepsilon^{-1} \) are mutually compatible and can be defined on the entire space \( \mathbb{T}^4 \), with both being \( C^\infty \)-smooth.
\end{proof}

Now, Let us construct a mixed partially hyperbolic diffeomorphism $f$.
More presicely, for sufficiently large $k$  (which we will choose appropriately later),  we define $f=A\circ I_\varepsilon$.  Based on the definition of \( I_\varepsilon \), we obtain the following:
{\color{red}
\[
Df|_{\Lambda(p)} = \begin{pmatrix}
\lambda^{uu} & 0 & 0 & 0 \\[6pt]
0 & \lambda^{ss} & 0 & 0 \\[6pt]
\frac{\partial P}{\partial a} & \frac{\partial P}{\partial b} & \frac{\partial P}{\partial c} & \frac{\partial P}{\partial d} \\[6pt]
0 & 0 & 0 & \lambda^s
\end{pmatrix} \quad \text{and} \quad
Df|_{\Lambda(q)} = \begin{pmatrix}
\lambda^{uu} & 0 & 0 & 0 \\[6pt]
0 & \lambda^{ss} & 0 & 0 \\[6pt]
0 & 0 & \lambda^u & 0 \\[6pt]
-\frac{\lambda^s\frac{\partial Q}{\partial a}}{\frac{\partial Q}{\partial d}} & -\frac{\lambda^s\frac{\partial Q}{\partial b}}{\frac{\partial Q}{\partial d}} & -\frac{\lambda^s\frac{\partial Q}{\partial c}}{\frac{\partial Q}{\partial d}} & (\frac{\partial Q}{\partial d})^{-1}
\end{pmatrix}.
\]
}

\smallskip\smallskip\smallskip\smallskip\smallskip\smallskip

Next, we sequentially prove that the map $f$ constructed in this way has the following properties:
\begin{enumerate}
\item\label{(1)}  $f$  is a $C^\infty$-diffeomorphism.
\item\label{(3)} $f$ is  partially hyperbolic with the partially hyperbolic splitting $$T\T^4=F^{uu} \oplus_{\succ} F^{cu} \oplus_{\succ} (F^{cs}\oplus_\succ F^{ss}).$$
\item\label{(4)} there exists fixed points $p,q$ such that 
\[
\|Df|{F^{cu}(p)}\| = 1 \quad \text{and} \quad \|Df|{F^{cs}(q)}\| = 1.
\]
\item\label{(5)} every Gibbs $u$-state exhibits  positive Lyapunov exponents along $F^{cu}$ and negative Lyapunov exponents along $F^{cs}\oplus F^{ss}$.
\end{enumerate}

Now, we are ready to prove above properties~\ref{(1)}\ref{(3)}\ref{(4)}\ref{(5)}.

\begin{proof}[Proof of item~\ref{(1)}]
Notice that $f$ is actually the composition of two maps, i.e., $f=A\circ I_\varepsilon$. Thus, by Lemma~\ref{well-defined}, we conclude the result.
\end{proof}

Now, we are ready to proof the item~\ref{(3)}.

\begin{proof}[Proof of item~\ref{(3)}]
Since
$$
P(a,b,c,d)=s(kc)\cdot s(\sqrt{a^2+b^2+d^2})\cdot(c-\lambda^uc)+\lambda^uc,
$$ and 
$$
Q(a,b,c,d)=s(kd)\cdot s(\sqrt{a^2+b^2+c^2})\cdot(d-\frac{1}{\lambda^s}d)+\frac{1}{\lambda^s}d.
$$
When the partial derivatives of \( P(a, b, c, d) \) with respect to \( a \), \( b \), and \( d \) and \( Q(a, b, c, d) \) with respect to \( a \), \( b \), and \( c \) are not zero,
\[
\frac{\partial P}{\partial a} = s(kc)\cdot c(1 - \lambda^u) \cdot s'( \sqrt{a^2 + b^2 + d^2} ) \cdot \frac{a}{\sqrt{a^2 + b^2 + d^2}},
\]
\[
\frac{\partial P}{\partial b} = s(kc)\cdot c(1 - \lambda^u) \cdot s'( \sqrt{a^2 + b^2 + d^2} ) \cdot \frac{b}{\sqrt{a^2 + b^2 + d^2}},
\]
\[
\frac{\partial P}{\partial d} = s(kc)\cdot c(1 - \lambda^u) \cdot s'( \sqrt{a^2 + b^2 + d^2} ) \cdot \frac{d}{\sqrt{a^2 + b^2 + d^2}}.
\]
\[
\frac{\partial Q}{\partial a} = s(kd) \cdot d( 1 - \frac{1}{\lambda^s})\cdot s'(\sqrt{a^2 + b^2 + c^2}) \cdot \frac{a}{\sqrt{a^2 + b^2 + c^2}}.
\]
\[
\frac{\partial Q}{\partial b} = s(kd) \cdot d( 1 - \frac{1}{\lambda^s})\cdot s'(\sqrt{a^2 + b^2 + c^2}) \cdot \frac{b}{\sqrt{a^2 + b^2 + c^2}}.
\]
\[
\frac{\partial Q}{\partial c} = s(kd) \cdot d( 1 - \frac{1}{\lambda^s})\cdot s'(\sqrt{a^2 + b^2 + c^2}) \cdot \frac{c}{\sqrt{a^2 + b^2 + c^2}}.
\]
Since
\[
 |s(kc)\cdot  c|\le\frac{\delta}{k} \quad \text{and} \quad |s(kd)\cdot d|\le\frac{\delta}{k}
\]
 and the function $s'(y):\R\to\R$ is bounded above, by choosing  $k$ sufficiently large,   the partial derivatives
\[
\frac{\partial P}{\partial a}, \quad \frac{\partial P}{\partial b}, \quad \frac{\partial P}{\partial d}, \quad \frac{\partial Q}{\partial a}, \quad \frac{\partial Q}{\partial b}, \quad \text{and} \quad \frac{\partial Q}{\partial c}
\]
can simultaneously be made arbitrarily close to \(0\).

By Lemma~\ref{splitting}, $$\lambda^s<1\le\frac{\partial P}{\partial c}\le\frac{\lambda^{uu}}{2}<\lambda^{uu} \quad \text{and} \quad \frac{1}{\lambda^u}<1\le\frac{\partial Q}{\partial d}\le\frac{1}{2\lambda^{ss}}<\frac{1}{\lambda^{ss}}.$$
Then, we can check that
\[
Df|_{\Lambda(p)} = \begin{pmatrix}
\lambda^{uu} & 0 & 0 & 0 \\[6pt]
0 & \lambda^{ss} & 0 & 0 \\[6pt]
\frac{\partial P}{\partial a} & \frac{\partial P}{\partial b} & \frac{\partial P}{\partial c} & \frac{\partial P}{\partial d} \\[6pt]
0 & 0 & 0 & \lambda^s
\end{pmatrix};
{\color{red}Df^{-1}|_{f(\Lambda(p))}}=\begin{pmatrix}
\frac{1}{\lambda^{uu}} & 0 & 0 & 0 \\[6pt]
0 & \frac{1}{\lambda^{ss}} & 0 & 0 \\[6pt]
-\left(\frac{\partial P}{\partial c}\right)^{-1} \frac{1}{\lambda^{uu}} \frac{\partial P}{\partial a} & -\left(\frac{\partial P}{\partial c}\right)^{-1} \frac{1}{\lambda^{ss}} \frac{\partial P}{\partial b}  &\left(\frac{\partial P}{\partial c}\right)^{-1} & -\left(\frac{\partial P}{\partial c}\right)^{-1}\frac{1}{\lambda^u} \frac{\partial P}{\partial d} \\[6pt]
0 & 0 & 0 & \frac{1}{\lambda^s}
\end{pmatrix}.
\]
and {\color{red} suppose that $R_a=-\frac{\lambda^s\frac{\partial Q}{\partial a}}{\frac{\partial Q}{\partial d}}, R_b=-\frac{\lambda^s\frac{\partial Q}{\partial b}}{\frac{\partial Q}{\partial d}}, R_c=-\frac{\lambda^s\frac{\partial Q}{\partial c}}{\frac{\partial Q}{\partial d}}$
\[
Df|_{\Lambda(q)} = \begin{pmatrix}
\lambda^{uu} & 0 & 0 & 0 \\[6pt]
0 & \lambda^{ss} & 0 & 0 \\[6pt]
0 & 0 & \lambda^u & 0 \\[6pt]
R_a & R_b & R_c & (\frac{\partial Q}{\partial d})^{-1}
\end{pmatrix};
Df^{-1}\big|_{f(\Lambda(q))} =
\begin{pmatrix}
\dfrac{1}{\lambda^{uu}} & 0 & 0 & 0 \\[8pt]
0 & \dfrac{1}{\lambda^{ss}} & 0 & 0 \\[8pt]
0 & 0 & \dfrac{1}{\lambda^u} & 0 \\[8pt]
-\dfrac{\partial Q}{\partial d}\,\dfrac{R_a}{\lambda^{uu}} & 
-\dfrac{\partial Q}{\partial d}\,\dfrac{R_b}{\lambda^{ss}} & 
-\dfrac{\partial Q}{\partial d}\,\dfrac{R_c}{\lambda^{u}} & 
\dfrac{\partial Q}{\partial d}
\end{pmatrix}.
\]
}

Therefore, we can choose a constant \( \varepsilon_0 > 0 \) and a sufficiently large integer \( k_0 \) such that for all \( k \geq k_0 \), the following holds: (Here, {\color{red}\( k \) depends on \( \varepsilon_0 \)}, and for any arbitrarily small \( \varepsilon_0 > 0 \), there always exists a sufficiently large \( k \) such that)
\begin{itemize}
\item $\C_{\varepsilon_0}(E^{uu}, E^u\oplus E^s\oplus E^{ss})$ is $Df$-unstable.
\item $\C_{\varepsilon_0}(E^{ss}, E^{uu}\oplus E^u\oplus E^{s})$ is $Df^{-1}$-unstable.
\item $E^u\oplus E^s$ is $Df$-invariant.
\item  $\C_{\varepsilon_0}(E^{u}, E^{s})$ is $Df$-forward invariant.
\item  $\C_{\varepsilon_0}(E^{s}, E^{u})$ is $Df^{-1}$-forward invariant.
\end{itemize}
Once the dimension of the dominated subbundles is fixed, the dominated splitting is uniquely determined.
It follows that there exists a partially hyperbolic splitting 
$T\T^4=F^{uu} \oplus_{\succ} F^{cu} \oplus_{\succ} (F^{cs}\oplus F^{ss})$ such that
\begin{itemize}
\item $F^{uu}\subset\C_{\varepsilon_0}(E^{uu}, E^u\oplus E^s\oplus E^{ss})$ and $F^{uu}$ is uniformly expanding.
\item $F^{ss}\subset \C_{\varepsilon_0}(E^{ss}, E^u\oplus E^s\oplus E^{uu})$ and $F^{ss}$ is uniformly contracting.
\item $F^{cu}\subset\C_{\varepsilon_0}(E^{u}, E^{s}), F^{cs}\subset\C_{\varepsilon_0}(E^{s}, E^{u})$.
\end{itemize}
\end{proof}

Now,  we explain the item~\ref{(4)}.

\begin{proof}[Proof of item~\ref{(4)}]
Since $$
Df_p=\begin{pmatrix}
\lambda^{uu} & 0 & 0&0 \\
0  & \lambda^{ss}& 0 &0\\
0 & 0 & 1&0\\
0&0&0&\lambda^s
\end{pmatrix}
\quad \text{and} \quad
Df_q=\begin{pmatrix}
\lambda^{uu} & 0 & 0&0 \\
0  & \lambda^{ss}& 0 &0\\
0 & 0 & \lambda^u&0\\
0&0&0&1
\end{pmatrix}
$$
 and $p,q$ are fixed points.
By domination, it follows that $F^{cu}(p)=E^{u}(p)$ and $F^{cs}(p)=E^{s}(p)$ (similarly for $q$). Therefore, the result is obtained directly.
\end{proof}

Before we prove the final item~\ref{(5)}, let us first prove the following lemma.
Note that for the modified map \( f \), both \( E^u \) and \( E^s \) are no longer invariant subbundles. However, we have \( F^{cu} \subset \mathcal{C}_{\varepsilon_0}(E^u, E^s) \) and \( F^{cs} \subset \mathcal{C}_{\varepsilon_0}(E^s, E^u) \). Therefore, we proceed with the following proof to determine the signs of the Lyapunov exponents of the Gibbs \( u \)-states along \( F^{cu} \) and \( F^{cs} \), respectively.

\begin{lem}\label{zuihouzhuyao} There exists sufficiently large $k$ such that
for any Gibbs $u$-state $\mu$ of $f$, we have that $$\int\log|\det Df|_{F^{cu}}|d\mu>0, \int\log|\det Df^{-1}|_{F^{cs}}|d\mu>0.$$
\end{lem}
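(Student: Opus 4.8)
The plan is to reduce the estimate on $\int \log|\det Df|_{F^{cu}}|\,d\mu$ (and the symmetric one for $F^{cs}$) to an estimate along the linear directions $E^u$, $E^s$ of $A$, exploiting that $F^{cu}\subset \mathcal{C}_{\varepsilon_0}(E^u,E^s)$ is a thin cone around $E^u$ and that $P$ was built so that the modification only \emph{increases} the expansion in the $c$-coordinate outside a small set. Concretely, for any unit vector $v\in F^{cu}(x)$ write $v=v^u+v^s$ with $v^u\in E^u$, $v^s\in E^s$ and $\|v^s\|\le\varepsilon_0\|v^u\|$; since $F^{cu}$ is one-dimensional, $\log|\det Df|_{F^{cu}(x)}|=\log\|Df(v)\|$. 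I would first record the elementary cone estimate
\[
\frac{1}{\sqrt{\varepsilon_0^2+1}}\cdot\frac{\|Df_c(x)v^u\|}{\|v^u\|}\;\le\;\|Df(v)\|\;\le\;\sqrt{\varepsilon_0^2+1}\cdot\frac{\|Df_c(x)v^u\|}{\|v^u\|},
\]
where $Df_c$ denotes the action in the $(c)$-plane induced by the lower-triangular form of $Df$; for $x\notin U_p\cup U_q$ this action is exactly multiplication by $\lambda^u$, on $\Lambda(p)$ it is multiplication by $\partial P/\partial c\in[1,\lambda^{uu}/2]$ (Lemma~\ref{splitting}), and on $U_p\setminus\Lambda(p)$, $U_q$ it is again $\lambda^u$. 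The off-diagonal entries $\partial P/\partial a,\partial P/\partial b,\partial P/\partial d$ can be made uniformly $\le\varepsilon_0$ by taking $k$ large (as shown in the proof of item~\ref{(3)}), so the induced $Df_c$ is genuinely close to the listed diagonal values.

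Next I would use the Birkhoff ergodic theorem for $\mu$ together with the fact, already established, that $F^{cu}$ is $Df$-invariant and one-dimensional, so that
\[
\int \log|\det Df|_{F^{cu}}|\,d\mu
=\lim_{n\to\infty}\frac1n\int \log\|Df^n|_{F^{cu}(x)}\|\,d\mu
\]
and, by the cone estimate iterated along an orbit, this differs from $\int \log\big(\text{the induced }c\text{-expansion}\big)\,d\mu$ by at most $\log\sqrt{\varepsilon_0^2+1}$ per unit time in the limit—more carefully, the telescoping of the cone bounds gives a uniformly bounded error, hence in the Cesàro limit
\[
\int \log|\det Df|_{F^{cu}}|\,d\mu \;\ge\; \int \log\big(\text{$c$-expansion rate}\big)\,d\mu \;-\;\log\sqrt{\varepsilon_0^2+1}.
\]
Because the $c$-expansion rate equals $\lambda^u$ off $\Lambda(p)\cup\Lambda(q)$ and is $\ge 1$ everywhere (never less than $1$ on $\Lambda(p)$ by Lemma~\ref{splitting}, and equal to $\lambda^u>1$ on $\Lambda(q)$), the integral is bounded below by
\[
\log\lambda^u\cdot\mu\big((U_p\cup U_q)^c\big) + 0\cdot\mu(\Lambda(p)) + \log\lambda^u\cdot\mu(\Lambda(q)).
\]
Here is where the Gibbs $u$-state hypothesis enters: I would invoke the standard density property of Gibbs $u$-states (their conditional measures along the strong unstable foliation $\mathcal{F}^{uu}$ are absolutely continuous, and $\mathcal{F}^{uu}$ here is $\varepsilon_0$-close to the $\lambda^{uu}$-direction $E^{uu}$, so unstable plaques spread uniformly in the $a$-direction) to bound $\mu(\Lambda(p))$ from above by a quantity like $\mathrm{Leb}([-2\delta,2\delta]^2)\le \frac1{100}$—or, more robustly, to show $\mu\big((U_p\cup U_q)^c\big)$ is bounded below by an absolute constant such as $\tfrac{9}{10}$, while $\mu(\Lambda(p))\le\tfrac1{10}$. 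Substituting and using condition~\eqref{condition} (with $\varepsilon_1$ playing the role of $\varepsilon_0$) yields
\[
\int \log|\det Df|_{F^{cu}}|\,d\mu \;\ge\; \log\Big(\tfrac{\lambda^u}{\sqrt{\varepsilon_0^2+1}}\Big)\cdot\tfrac{9}{10} + \log\Big(\tfrac{\sqrt{1-\varepsilon_0}}{\sqrt{\varepsilon_0^2+1}}\Big)\cdot\tfrac1{10} \;>\;0,
\]
as desired. The estimate for $\int\log|\det Df^{-1}|_{F^{cs}}|\,d\mu>0$ is obtained by the mirror-image argument: $F^{cs}\subset\mathcal{C}_{\varepsilon_0}(E^s,E^u)$, $Df^{-1}$ expands the $d$-coordinate by $1/\lambda^s>1$ off $\Lambda(p)\cup\Lambda(q)$ and by $\partial Q/\partial d\ge 1$ on $\Lambda(q)$, and the role of the strong unstable foliation is played—relative to $f^{-1}$—by the fact that Gibbs $u$-states for $f$ still charge the complement of $U_q$ heavily; one must be slightly careful that "Gibbs $u$-state" is a statement about $f$, not $f^{-1}$, so rather than re-running Birkhoff for $f^{-1}$ I would simply note $\log|\det Df^{-1}|_{F^{cs}(x)}| = -\log\|Df|_{F^{cs}(x)}\|$ and bound $\int\log\|Df|_{F^{cs}}|\,d\mu$ from above by the analogous cone computation, getting $\le \log\big(\lambda^s\sqrt{\varepsilon_0^2+1}\big)\cdot\tfrac9{10} + \log\sqrt{\varepsilon_0^2+1}\cdot\tfrac1{10}<0$.

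The main obstacle I anticipate is making the mass estimate $\mu(\Lambda(p))\le\tfrac1{10}$ (equivalently $\mu((U_p\cup U_q)^c)\ge\tfrac9{10}$) precise and uniform over \emph{all} Gibbs $u$-states $\mu$. The Lebesgue bound $\mathrm{Leb}([-2\delta,2\delta]^2)\le\frac1{100}$ controls only the "width" in the modified coordinates, and translating it into a measure bound requires the absolute continuity of the unstable conditionals together with a uniform distortion/covering argument showing that no Gibbs $u$-state can concentrate more than a fixed fraction of its mass on the thin slab $\Lambda(p)$; this is where the specific geometry of $A$ (the separation of $U_p$, $U_q$, the fact that $\mathcal{F}^{uu}$ is nearly $E^{uu}$, and $n>m$) must be used, and it is the step that genuinely needs the smallness of $\delta$ and the choice of $\varepsilon_0$ in \eqref{condition}. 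Everything else—the cone inequalities, the chain rule, Birkhoff—is routine bookkeeping once $k$ is taken large enough that the off-diagonal terms of $Df$ are $\le\varepsilon_0$.
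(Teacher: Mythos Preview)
Your cone estimates and the overall strategy (split the integral into $\Lambda(p)$ and its complement, bound the integrand below on each piece, then combine via condition~\eqref{condition}) match the paper's approach; the Birkhoff/Ces\`aro detour is unnecessary---the paper simply integrates $\log\|Df|_{F^{cu}}\|$ directly and splits the domain---but it does no harm.

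The genuine gap is precisely the step you flag as ``the main obstacle'': the uniform mass bound $\mu(\Lambda(p))\le\tfrac1{100}$. Your proposed route via absolute continuity of unstable conditionals plus a distortion/covering argument is not what the paper does, and it is not clear it would yield a bound \emph{uniform over all Gibbs $u$-states} without substantial additional work. The paper's observation is much cleaner: the action of $f$ on the \emph{first} $\T^2$ factor is still the linear map $D^n$, so $f$ is a linear Anosov skew-product and hence factors over $D^n$. By (the $\T^2\times\T^2$ version of) Lemma~\ref{ifonly}, every Gibbs $u$-state $\mu$ is a $c$-Gibbs $u$-state, and for $c$-Gibbs $u$-states the pushforward $\pi_*\mu$ under the projection $\pi:\T^2\times\T^2\to\T^2$ is \emph{exactly} Lebesgue measure on $\T^2$. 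Since $\Lambda(p)\subset[-2\delta,2\delta]^2\times\T^2$, one gets immediately
\[
\mu(\Lambda(p))\le\mu\big([-2\delta,2\delta]^2\times\T^2\big)=\pi_*\mu\big([-2\delta,2\delta]^2\big)=\Leb_{\T^2}\big([-2\delta,2\delta]^2\big)\le\tfrac1{100},
\]
uniformly in $\mu$. This is the missing idea; once you have it, your outline goes through without the Birkhoff step and without any distortion estimates. The $F^{cs}$ half is handled symmetrically, using $\mu(f(\Lambda(q)))\le\tfrac1{100}$ by the same projection argument.
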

\begin{proof}
 Since \( \mathcal{C}_{\varepsilon_0}(E^{u}, E^{s}) \) is \( Df \)-forward invariant (and we can assume \( \varepsilon_0< 1 \)), it follows that for any point $x\notin \Lambda(p)$, any $v\in E^{cu}\subset\mathcal{C}_{\varepsilon_0}(E^{u}, E^{s})$, we can write \( v = v_{u} + v_{s} \), where \( v_{u} \in E^{u} \), \( v_{s} \in E^{s} \), and \( v_{u} \neq 0 \). Consequently, by the invariance of $E^s$ and \( Df \)-forward invariance of \( \mathcal{C}_{\varepsilon_0}(E^{u}, E^{s})\) (note that for \( x \notin \Lambda(p) \), \( E^s(x) \) remains invariant),  we have the following inequality: 
{\color{red}\[
\lambda^u\|v_{u}\|\le\|Df(v)\|\le \lambda^u\|v_{u}\|\cdot\sqrt{\varepsilon_0^2+1} \quad \text{and} \quad \frac{1}{\sqrt{\varepsilon_0^2+1}}\le\frac{\|v_{u}\|}{\|v\|}\le1.
\]
(Alternatively, we can directly verify that
$$
\begin{pmatrix}
\lambda^u & 0 \\
* & *
\end{pmatrix}
\begin{pmatrix}
v^u \\
v^s
\end{pmatrix}
=
\begin{pmatrix}
\lambda^u v^u \\
*
\end{pmatrix}
\in \mathcal{C}_{\varepsilon_0}(E^u, E^s).)
$$
From this, it follows that
\[
\frac{\lambda^u}{\sqrt{\varepsilon_0^2+1}}\le \frac{\|Df(v)\|}{\|v\|}.
\]
Since \( \varepsilon_0 \) can be chosen sufficiently small when \( k \) is taken sufficiently large, it follows that by taking $k$ sufficiently large, we can guarantee $\varepsilon_0\le\varepsilon_1$.
It follows that when $x\notin\Lambda(p)$, we have
\[
\frac{\|Df(v)\|}{\|v\|}\ge \frac{1}{\sqrt{\varepsilon_1^2+1}}\cdot\lambda^u.
\]}

When $x\in\Lambda(p)$, for $v=(0,0,1,\varepsilon)\in F^{cu}$, we can check that
\[
\frac{\|Df(v)\|}{\|v\|}=\frac{\sqrt{(\dfrac{\partial P}{\partial c}+\dfrac{\partial P}{\partial d}\cdot\varepsilon)^2+(\lambda^s\varepsilon)^2}}{\sqrt{\varepsilon^2+1}}.
\]
By choosing \( k \) sufficiently large, we can ensure that \( \varepsilon_0 \) is much smaller than \( \varepsilon_1 \), thereby guaranteeing that
\[
\frac{\|Df(v)\|}{\|v\|} = \frac{\sqrt{\left(\dfrac{\partial P}{\partial c} + \dfrac{\partial P}{\partial d} \cdot \varepsilon\right)^2 + (\lambda^s \varepsilon)^2}}{\sqrt{\varepsilon^2 + 1}} \geq \frac{\sqrt{1 - \varepsilon_1}}{\sqrt{\varepsilon_1^2 + 1}}.
\]
(By Lemma~\ref{splitting} and the fact that \( \dfrac{\partial P}{\partial d} \) becomes sufficiently small as \( k \) increases, we can always achieve above.)

Notice that the action of \( f \) on the first torus \( \mathbb{T}^2 \) remains \( D^n \). By the definition of linear Anosov skew-products, it follows that \( f \) is a linear Anosov skew-product. Thus, \( f \) factors over \( D^n \).  For any Gibbs $u$-state $\mu$, by another version of Lemma~\ref{ifonly} with respect to system \(( \mathbb{T}^2 \times \mathbb{T}^2,f) \), $\mu$ is a $c$-Gibbs $u$-state.
Let \(\pi: \mathbb{T}^2 \times \mathbb{T}^2 \to \mathbb{T}^2\) be the projection map defined by \(\pi(x, y) = x\). It follows that for the \(c\)-Gibbs \(u\)-state \(\mu\) on \(\mathbb{T}^2 \times \mathbb{T}^2\), the pushforward measure \(\pi_*(\mu)\) is the Lebesgue measure \(\Leb_{\mathbb{T}^2}\) on the torus \(\mathbb{T}^2\).
\[
\pi_*(\mu) = \Leb_{\mathbb{T}^2} \quad \text{is a property of \( c \)-Gibbs \( u \)-states.}
\]
It follows that
\[
\mu([-2\delta,2\delta]^2\times\T^2)=\pi_*\mu([-2\delta,2\delta]^2)=\Leb_{\mathbb{T}^2}([-2\delta,2\delta]^2)\le\frac{1}{100}\quad \text{and} \quad \Lambda(p)\subset [-2\delta,2\delta]^2\times\T^2.
\]
Then by our setup~\ref{condition} we have
\begin{align*}
\begin{split}
\int\log|\det Df|_{F^{cu}}|d\mu&= \int_{\Lambda(p)}\log|\det Df|_{F^{cu}}|d\mu+ \int_{\T^4\setminus\Lambda(p)}\log|\det Df|_{F^{cu}}|d\mu\\
&\ge[\log\frac{\sqrt{1-\varepsilon_1}}{\sqrt{\varepsilon_1^2+1}}] \cdot \frac{1}{100} + [\log\left(\frac{\lambda^u}{\sqrt{\varepsilon_1^2+1}}\right)] \cdot \frac{99}{100}\\
&>[\log\frac{\sqrt{1-\varepsilon_1}}{\sqrt{\varepsilon_1^2+1}}] \cdot \frac{1}{10} + [\log\left(\frac{\lambda^u}{\sqrt{\varepsilon_1^2+1}}\right)] \cdot \frac{9}{10}>0
\end{split}
\end{align*}

For the second equality, similarly, since $\lambda^u=\frac{1}{\lambda^s}$ {\color{red}and $\mu(f(\Lambda(q)))\le\frac{1}{100}$ }, we can consider the action  under $Df^{-1}$ to establish the desired result. 
\end{proof}

Now, we are ready to proof the last item~\ref{(5)}.

\begin{proof}[Proof of item~\ref{(5)}]
By combining Lemma~\ref{zuihouzhuyao} with the Oseledets multiplicative ergodic theorem (see \cite{Viana}), we can conclude the desired results.
\end{proof}

\begin{cor}\label{proofC}
Recall that \( D = \begin{pmatrix} 2 & 1 \\ 1 & 1 \end{pmatrix} \) and \( A = D^n \times D^m\) on \( \mathbb{T}^2 \times \mathbb{T}^2 \). Then there exists an integer \( n \) such that for the corresponding \( f \) constructed by the above method, there exists a \( C^1 \)-neighborhood \( \mathcal{U}_f \) such that every \( \tilde{f} \in \mathcal{U}_f \) factors over \( D^n \) and has a \( c \)-mixed center.
\end{cor}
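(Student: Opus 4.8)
The plan is to combine the properties of the constructed map $f=A\circ I_\varepsilon$ with $A=D^n\times D^m$ established in items~\ref{(1)}--\ref{(5)} with a robustness argument, using Lemma~\ref{openness} to carry the $c$-mixed center over to nearby maps once the factoring has been arranged. First I would observe that $f$ itself already has all the desired properties: since $I_\varepsilon$ is the identity on the first $\T^2$-factor and is supported in $U_p\cup U_q$, the map $f$ is a $C^\infty$ linear Anosov skew-product whose base action is $D^n$, so it factors over $D^n$ (as noted in the proof of Lemma~\ref{zuihouzhuyao}); by item~\ref{(3)} it is partially hyperbolic with splitting $F^{uu}\oplus_\succ F^{cu}\oplus_\succ(F^{cs}\oplus_\succ F^{ss})$, and setting $E^{uu}=F^{uu}$, $E^{cu}=F^{cu}$, $E^{cs}=F^{cs}\oplus F^{ss}$, the bundle $E^{cu}\oplus E^{cs}$ is exactly the complement of the one-dimensional strong unstable bundle $F^{uu}$, hence uniquely integrable by the center-stable manifold theorem, so $f$ is dynamically coherent in the sense needed for factoring over Anosov. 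Finally, by the $\T^2\times\T^2$ version of Lemma~\ref{ifonly} used in the proof of Lemma~\ref{zuihouzhuyao}, the Gibbs $u$-states and the $c$-Gibbs $u$-states of $f$ coincide, so item~\ref{(5)} says precisely that every $c$-Gibbs $u$-state has only positive Lyapunov exponents along $E^{cu}$ and only negative Lyapunov exponents along $E^{cs}$; that is, $f$ has a $c$-mixed center.

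Next I would set up the neighborhood and reduce to a single robustness statement. Partial hyperbolicity with a dominated splitting of the above type is $C^1$-open, since the cone fields defining $F^{uu}$ and $F^{ss}$ and the dominations persist under small perturbations, and dynamical coherence is automatic as in the previous paragraph; hence every $\tilde f$ in a small $C^1$-neighborhood of $f$ is partially hyperbolic and dynamically coherent with the analogous splitting. If such a $\tilde f$ also factors over $D^n$, then the $c$-mixed center is free: $f$ satisfies the hypotheses of Lemma~\ref{linyu} (it factors over $D^n$ and has a $c$-mixed center), so there is a $C^1$-neighborhood $\tilde{\mathcal U}$ of $f$ such that any $g\in\tilde{\mathcal U}$ factoring over the same Anosov $D^n$ has a $c$-mostly expanding and $c$-mostly contracting center by Lemma~\ref{openness}; one then takes $\mathcal U_f\subset\tilde{\mathcal U}$. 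Thus everything reduces to showing that, after choosing $n$ suitably large and $\mathcal U_f$ small, every $\tilde f\in\mathcal U_f$ factors over $D^n$.

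This last point is the heart of the argument and the main obstacle, since a $C^1$-perturbation of a skew-product need not be a skew-product. The plan is the Franks--Manning construction: $\tilde f$ is $C^0$-close to $f$, hence homotopic to $A=D^n\times D^m$, which is a linear Anosov automorphism of $\T^4$; lifting to $\R^4$, the lift $\tilde F$ of $\tilde f$ satisfies $\sup\|\tilde F-\bar A\|<\infty$, where $\bar A$ is the linear lift of $A$, so hyperbolicity of $\bar A$ produces a unique continuous $\tilde H$ with $\sup\|\tilde H-\mathrm{id}\|<\infty$ and $\tilde H\circ\tilde F=\bar A\circ\tilde H$, which descends to a continuous surjection $h:\T^4\to\T^4$ with $h\circ\tilde f=A\circ h$. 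Composing with the first projection $\rho:\T^2\times\T^2\to\T^2$ gives $\pi_{\tilde f}:=\rho\circ h$, and $\pi_{\tilde f}\circ\tilde f=D^n\circ\pi_{\tilde f}$. What remains is to verify the two foliation conditions in the definition of ``factors over Anosov'': that $h$ carries each strong unstable leaf of $\tilde f$ into a strong unstable leaf of $A$ --- so that $\pi_{\tilde f}=\rho\circ h$ restricts to a homeomorphism of it onto an unstable leaf of $D^n$ --- and that $h$ carries the center-stable foliation of $\tilde f$ onto that of $A$, whose $\rho$-image is the stable foliation of $D^n$. The first of these is where the choice of $n$ enters: the bounded-displacement property of $\tilde H$, together with the fact that $F^{uu}_{\tilde f}$ is the most expanded subbundle of $\tilde f$ and, for $n$ large, strictly dominates $F^{cu}_{\tilde f}$ by a definite margin, forces the $h$-image of a strong unstable leaf to lie in a strong unstable leaf of $A$ rather than a general unstable leaf; the second is analogous, using that $F^{ss}_{\tilde f}$ is the most contracted subbundle. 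I expect this verification of the foliation conditions to be the delicate step, and it can be carried out along the lines of the corresponding arguments for diffeomorphisms factoring over Anosov in \cite{UVYY}.
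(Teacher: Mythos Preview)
Your overall reduction is the same as the paper's: show that $f$ itself factors over $D^n$ with a $c$-mixed center, then invoke Lemma~\ref{openness} once you know nearby $\tilde f$ also factor over $D^n$. The divergence is in how you obtain the latter. The paper does not go through the Franks--Manning semiconjugacy to the full Anosov $A$ on $\T^4$; instead it observes that for $n$ sufficiently large the two-dimensional fiber foliation $\{\{x\}\times\T^2:x\in\T^2\}$ is normally hyperbolic, applies the stability theorem of \cite{HPS} to get a nearby $\tilde f$-invariant foliation with compact leaves, and then argues as in \cite[Lemma~5.2]{LiZhang} that the induced action on the leaf space is conjugate to $D^n$, which yields the required semiconjugacy and the foliation conditions simultaneously. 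This makes the role of ``$n$ large'' completely transparent (it is exactly what forces normal hyperbolicity of the fibers) and avoids the delicate leaf-to-leaf verification you flag at the end.

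Your Franks--Manning route can in principle be completed along the lines you indicate, but two points deserve care. First, your claim that dynamical coherence is automatic because ``the center-stable manifold theorem'' makes $E^{cu}\oplus E^{cs}$ uniquely integrable is not correct as stated: there are partially hyperbolic diffeomorphisms for which the center-stable bundle fails to integrate to a foliation, so integrability here must come from the specific structure (normal hyperbolicity of the fiber foliation, i.e., exactly the paper's mechanism) rather than from a general principle. Second, the argument that the bounded-displacement semiconjugacy $h$ sends strong unstable leaves of $\tilde f$ into \emph{strong} unstable leaves of $A$ (rather than merely into the full two-dimensional unstable leaves) requires more than domination of $F^{uu}$ over $F^{cu}$; one typically needs a quasi-isometry or growth-rate comparison along leaves, and this is where the real work in your approach lies. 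The paper's HPS argument bypasses both issues in one stroke.
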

\begin{proof}
Let \( n \) and \( m \) satisfy the conditions required for the above construction. Notice that the action of \( f \) on the first torus \( \mathbb{T}^2 \) remains \( D^n \). By the definition of linear Anosov skew-products, it follows that \( f \) is a linear Anosov skew-product. Thus, \( f \) factors over \( D^n \).  Combining item~\ref{(5)} and another version of Lemma~\ref{ifonly} with respect to \( \mathbb{T}^2 \times \mathbb{T}^2 \), we conclude that \( f \) has a \( c \)-mixed center. By choosing \( n \) sufficiently large, the  foliation \( \mathcal{F}^c(f) = \{ \{x\} \times \mathbb{T}^2 : x \in \mathbb{T}^2 \} \) of \( f \) becomes normally hyperbolic. Thus, by the stability theorem in \cite{HPS} and similarly to the proof in \cite[Lemma~5.2]{LiZhang}, there exists a \( C^1 \)-neighborhood \( \mathcal{U}_f \) such that every \( \tilde{f} \in \mathcal{U}_f \) factors over \( D^n \).
Since the property of having a \( c \)-mixed center is open among diffeomorphisms factoring over the same Anosov map as \( f \), it follows that, up to shrinking \( \mathcal{U}_f \), \( \tilde{f} \) also has a \( c \)-mixed center.
\end{proof}

\begin{proof}[Proof of Theorem~C]
Let $f$ as in Corollary~\ref{proofC}. {\color{red}By a slight modification, we replace $I_\varepsilon$ with $\tilde{I_\varepsilon}$ such that  
\begin{itemize}
\item when $(a,b,c,d)\notin U_p\cup U_q$, $\tilde{I_\varepsilon}=\tilde{I_\varepsilon}^{-1}=I$, where $I$ is the idenity;
\item when $(a,b,c,d)\in U_p$, $\tilde{I_\varepsilon}=(a, b, \frac{\tilde{P}(a,b,c,d)}{\lambda^{u}},d)$ on $\Lambda(p)$, where $$\tilde{P}(a,b,c,d)=s(kc)\cdot s(\sqrt{a^2+b^2+d^2})\cdot(c-\lambda^uc-\epsilon c)+\lambda^uc.$$
\item when $(a,b,c,d)\in U_q$, $I_\varepsilon^{-1}=(a, b, c,\lambda^s\cdot \tilde{Q}(a,b,c,d))$ on $\Lambda(q)$,  where 
$$
\tilde{Q}(a,b,c,d)=s(kd)\cdot s(\sqrt{a^2+b^2+c^2})\cdot(d-\frac{1}{\lambda^s}d-\epsilon d)+\frac{1}{\lambda^s}d.
$$
\item $A\circ \tilde{I_\varepsilon}\in\U_f$, where \( \mathcal{U}_f \) is as  in Corollary~\ref{proofC}.
\end{itemize}
}
Let $\tilde{f}=A\circ \tilde{I_\varepsilon}$.
Then, we can confirm that \( p \) is a hyperbolic fixed point with stable index 3, while \( q \) is a hyperbolic fixed point with stable index 1 under \( \tilde{f} \).  Let \( p_g \) and \( q_g \) be the continuation of the saddles of \( p \) and \( q \) for a nearby diffeomorphism \( \tilde{f} \).
Therefore, there exists a \( C^1 \)-neighborhood \( \mathcal{U}_{\tilde{f}} \) of \( \tilde{f} \), contained within \( \mathcal{U}_f \), such that the diffeomorphisms in this neighborhood of \( \tilde{f} \) have hyperbolic fixed points \( p_g \) and \( q_g \), with stable indices 3 and 1, respectively. 

Therefore, \( \tilde{f} \) can serve as the \( f \) described in Theorem C.
\end{proof}

\subsection{Application of Construction}\label{checkuu}

Finally, we point out that the method demonstrated in the construction  can also be applied to modify the linear Anosov skew constructed through Proposition~\ref{pro.con}.

More precisely, 
for example, let \( g: \T^2\times S^1 \to \T^2\times S^1 \) be a $C^2$-partially hyperbolic skew product map, defined by 
$$
 g(x, y) = (B(x), k(x,y)),
$$
 where $ B$ is a hyperbolic linear automorphism with eigenvalues $\sigma_4>1>\sigma_1>0$. 
The eigenvalues $\sigma_4$ and $\sigma_1$  correspond to unstable and stable foliations, $\F_4$ (tangent to $E^{uu}$ at every point )and $\F_1$ (tangent to $E^{ss}$ at every point),  respectively, in the system $(\T^2,B)$. We  assume that the eigenspaces corresponding to distinct eigenvalues are mutually orthogonal.  Using an approach similar to that of Dolgopyat, Viana, and Yang \cite{DVY}, we can guarantee the following conditions:
\begin{itemize}
\item $g$ is $C^2$-closed to map $(B,id)$ (to guarantee $g$ is {\it center bunched}, see \cite{KBAW,CPMS});
\item $k:\T^2\times S^1\to S^1$ is $C^2$.
\item  every Gibbs $u$-state has only negative Lyapunov exponents along $TS^1 $.
\end{itemize}

Next, consider a proper hyperbolic linear automorphism \( C \) on \( \mathbb{T}^2 \) with eigenvalues \( \sigma_3 > 1 > \sigma_2 > 0 \), corresponding to the mutually orthogonal eigenspaces.
The eigenvalues $\sigma_3$ and $\sigma_2$  correspond to unstable and stable foliations, $\F_3$ (tangent to $E^{u}$ at every point) and $\F_2$ (tangent to $E^{s}$ at every point),  respectively, in the system $(\T^2,C)$.
 We can assume that 
\[
 0<\sigma_1 < \sigma_2 <\min\{|Dk_x(y)|:x\in\T^2,y\in S^1\} <1< \max\{|Dk_x(y)|:x\in\T^2,y\in S^1\}< \sigma_3 < \sigma_4.
\]
and
\[ 
 -M(1 - \sigma_3) + \sigma_3 \le \frac{\sigma_4}{2}.
\]
There always exist a hyperbolic periodic point with the stable index $2$ and a hyperbolic periodic point with the stable index $1$ in system $(\T^2\times S^1,g)$. Without loss of generality, assume the hyperbolic periodic point with the stable index $2$ is a fixed point $\tilde{p}$.  We can assume that there exists a constant $\epsilon$ such that $\|Dg|TS^1\|\le 1-\epsilon$ in  a small neighborhood $U_{\tilde{p}}$ of $\tilde{p}$.

Let \(\pi: \mathbb{T}^2 \times S^1 \times \mathbb{T}^2  \to \mathbb{T}^2 \times S^1\) be the projection such that $\pi(x,y,z)=(x,y)$.  By the method we used and using a local  coordinate transformation, we can assume that  there exists a small  open neighborhood $U_{\hat{p}}$ of $\hat{p}$ in system $(\T^2\times S^1\times\T^2, g\times C)$ such that:
\begin{itemize}
\item $\hat{p}=(\tilde{p},0)$ and $C(0)=0$;
\item \([-2\delta, 2\delta]^5 \subset U_{\hat{p}}\), where \(\hat{p}\) is the center of the cube, i.e., the origin $(0,0,0,0,0)$;
\item $\pi(U_{\hat{p}})\subset U_{\tilde{p}}$.
\end{itemize}
We then  modify the action of $g\times C$ in  $U_{\hat{p}}$. In other words,  
define $G$ as follows:
\begin{itemize}
\item when $(a,b,c,d,e)\notin U_{\hat{p}}$, $G=g\times C$;
\item when $(a,b,c,d,e)\in U_{\hat{p}}$, we set $$G(a,b,c,d,e)=(\sigma_4a,\sigma_1b, k(a,b,c), P(a,b,c,d,e), \sigma_2e)$$
 on $\F^{uu}(\hat{p})\times \F^{ss}(\hat{p})\times S^1\times \F^{u}(\hat{p})\times \F^{s}(\hat{p})$, where $$P(a,b,c,d,e)=s(kd)\cdot s(\sqrt{a^2+c^2+b^2+e^2})\cdot(d-\sigma_3d)+\sigma_3d;$$
\end{itemize}
The defined $G$ is a modification of $f\times C$ on $[-2\delta,2\delta]^5$.

Notice that
\begin{equation}\label{checku}
D G(a,b,c,d,e) = \begin{pmatrix}
\sigma_4 & 0 & 0 & 0 & 0 \\[6pt]
0 & \sigma_1 & 0 & 0 & 0 \\[6pt]
\frac{\partial k}{\partial a} & \frac{\partial k}{\partial b} & \frac{\partial k}{\partial c} & 0 & 0 \\[6pt]
\frac{\partial P}{\partial a} & \frac{\partial P}{\partial b} & \frac{\partial P}{\partial c} & \frac{\partial P}{\partial d} & \frac{\partial P}{\partial e} \\[6pt]
0 & 0 & 0 & 0 & \sigma_2
\end{pmatrix}.
\end{equation}
It can be verified that 
\[
\sigma_1<\sigma_2<\|Dg|TS^1\|=\frac{\partial k}{\partial c} \leq 1 - \epsilon < 1 \leq \frac{\partial P}{\partial d} \leq \frac{\sigma_4}{2} < \sigma_4.
\]
Since \( k \) can be chosen sufficiently large and \( k(a,b,\cdot) \) is \( C^2 \)-close to the identity on $S^1$, by selecting appropriate \( k \) and \(  k(a,b,\cdot) \), we can directly verify that  the partial derivatives
$$
\frac{\partial P}{\partial a},\frac{\partial P}{\partial b},\frac{\partial P}{\partial c},\frac{\partial P}{\partial e}, \frac{\partial k}{\partial a} \quad \text{and} \quad \frac{\partial k}{\partial b}
$$
can simultaneously be made arbitrarily close to $0$.
It follows from the forward invariance of the cone that a partially hyperbolic splitting exists:
\[
T(\T^2\times S^1\times\T^2)=F^{uu}_G \oplus_\succ F^{cu}_G \oplus_\succ (F^{cs}_G \oplus_\succ F^{ss}_G),
\]
 such that 
\begin{itemize}
\item $F^{uu}_G\subset E^{uu}\oplus TS^1\oplus E^u$, $F^{cu}_G=E^u$, $F^{cs}\subset TS^1\oplus E^u$, $F^{ss}_G\subset E^{ss}\oplus E^s\oplus TS^1\oplus E^u$ and $\dim(F^{ss}_G) = 2$;
\item \( F^{cu}_G \) is not uniformly expanding (similar to item~\ref{(4)}) such that \( \| Dg |_{F^{cu}} \| \ge 1 \) when $x\in U_{\hat{p}}$ and \( \| Dg |_{F^{cu}} \| > 1 \) when $x\notin U_{\hat{p}}$.
\item $F^{cs}_G$ (can be as $T\tilde{S^1}$ in Lemma~\ref{equvilence}) is not uniformly contracting (by our assumption on system $(\T^2\times S^1,g)$) such that
\[
  T\tilde{S^1}\subset TS^1\oplus F^{cu} \quad \text{and} \quad Df(TS^1\oplus  F^{cu})=TS^1\oplus  F^{cu}.
\]
\end{itemize}

 Since \( k(a,b,c) \) is independent of the parameters \( d \) and \( e \) on \( [-2\delta, 2\delta]^5 \), and the map projected onto \( \T^2 \times S^1 \) remains \( g \), we have \( \pi(G) = g \).
By invoking Lemma~\ref{equvilence}, it follows that every Gibbs $u$-state has only negative Lyapunov exponents along $T\tilde{S^1}$ in system $(\T^2\times S^1\times\T^2, G)$.
By combining this with $\int \log |\det Dg| d\mu>0$ for any Gibbs $u$-state in system $(\T^2\times S^1\times\T^2, G)$, we can conclude that the corresponding \( F^{cu}_G \) is not uniformly expanding, and \( G \) exhibits the mixed property.

\begin{rk}
We point out that by appropriately constructing \( g \), we can provide an example of variations in the physical measures of the system \( (\mathbb{T}^2 \times S^1 \times \mathbb{T}^2, G) \), where there exists a partially hyperbolic splitting 
\[
T(\mathbb{T}^2 \times S^1 \times \mathbb{T}^2) = F^{uu}_G \oplus_\succ F^{cu}_G \oplus_\succ (F^{cs}_G \oplus_\succ F^{ss}_G),
\]
and the corresponding \( E^{cu} = F^{cu}_G \) and \( E^{cs} = F^{cs}_G \oplus F^{ss}_G \) are both non-uniform.

We can consider a skew product system \( g(x, y) = (B(x), k_x(y)) \) with eight invariant tori \( \mathbb{T}^2 \times A_i \), where \( i = 1, 2, 3, 4, 5, 6, 7, 8 \), and eight physical measures are supported on these eight invariant tori. We modify the action of the map \( g \times C \) on a neighborhood $U$ of a fixed points on the invariant torus \( \mathbb{T}^2 \times A_2 \times \mathbb{T}^2 \), where $U\cap(\cup_{i=3,4,5,6,7,8,1}\mathbb{T}^2 \times A_i \times \mathbb{T}^2) =\emptyset$. Then, for the map acting from the invariant torus \( \mathbb{T}^2 \times A_3 \times \mathbb{T}^2 \) to \( \mathbb{T}^2 \times A_1 \times \mathbb{T}^2 \) (with the \( S^1 \) direction taken counterclockwise), the action remains the same as in the original \( g \times C \), provided that the part from \( \mathbb{T}^2 \times A_3 \times \mathbb{T}^2 \) to \( \mathbb{T}^2 \times A_1 \times \mathbb{T}^2 \) does not include \( \mathbb{T}^2 \times A_2 \times \mathbb{T}^2 \). By perturbing the neighborhood of the unstable manifold of a fixed point, where the fixed point is contained in the skeleton intersecting \( \cup_{i=4,5,6,7,8}\mathbb{T}^2 \times A_i \times \mathbb{T}^2 \), the cardinality of the skeleton decreases by one after the perturbation. This guarantees that the number of physical measures will also decrease by one.
\end{rk}

\section{Conflict of Interest}

The author declares that there is no conflict of interest regarding the publication of this paper.

\section{Data Availability}

No data was generated or analyzed during the study.

\smallskip

E-mail address: zhanghangyue@nju.edu.cn

\end{document}